\documentclass[12pt]{article}
\usepackage{graphicx} 
\usepackage{amsmath}
\usepackage{amssymb}
\usepackage[english]{babel}
\usepackage{amsthm}
\usepackage{mathtools}
\usepackage{xcolor}
\usepackage{geometry}
\geometry{legalpaper, margin=2cm}
\usepackage{setspace}
\usepackage[colorlinks=true, linkcolor=blue, citecolor=blue, urlcolor=blue]{hyperref}
\usepackage{comment}
\usepackage{enumitem}
\usepackage{subcaption}
\usepackage{caption}
\usepackage{authblk}
\usepackage{csquotes}
\usepackage{academicons}
\usepackage{xcolor}

\usepackage{color,soul}
\setstcolor{red}
\sethlcolor{yellow}

\setstretch{1.15}

\title{Nonsmooth bifurcations in families of one-dimensional piecewise-linear quasiperiodically forced maps}

\author{ Rafael Martinez-Vergara and Joan Carles Tatjer\\
{\small Departament de Matem\`atiques i Inform\`atica\\
Universitat de Barcelona\\
Gran Via 585, 08007 Barcelona, Spain.}
}

\date{\small Date: \today.}

\newcommand{\N}{\mathbb{N}}
\newcommand{\Z}{\mathbb{Z}}
\newcommand{\T}{\mathbb{T}}
\newcommand{\R}{\mathbb{R}}

\newcommand{\Q}{\mathbb{Q}}

\theoremstyle{plain}
\newtheorem{defn}{Definition}[section]
\newtheorem{prop}[defn]{Proposition}
\newtheorem{thm}[defn]{Theorem}
\newtheorem{lemma}[defn]{Lemma}
\newtheorem{corollary}[defn]{Corollary}

\theoremstyle{remark}
\newtheorem{remark}[defn]{Remark}
\newtheorem{assumption}[defn]{Assumption}

\DeclarePairedDelimiter\abs{\lvert}{\rvert}
\DeclarePairedDelimiter\norm{\lVert}{\rVert}

\setlength{\parindent}{0pt}

\theoremstyle{definition}
\newtheorem{definition}{Definition}[section]

\begin{document}

\maketitle

\begin{abstract}
    We study nonsmooth bifurcations of four types of families of one-dimensional quasiperiodically forced maps of the form $F_i(x,\theta) = (f_i(x,\theta), \theta+\omega)$ for $i=1,\dots,4$, where $x$ is real, $\theta\in\T$ is an angle, $\omega$ is an irrational frequency, and $f_i(x,\theta)$ is a real piecewise linear map with respect to $x$. The first two types of families $f_i$ have a symmetry with respect to $x$, and the other two could be viewed as quasiperiodically forced piecewise-linear versions of saddle-node and period-doubling bifurcations. The four types of families depend on two real parameters, $a\in\R$ and $b\in\R$. Under certain assumptions for $a$, we prove the existence of a continuous map $b^*(a)$ where for $b=b^*(a)$ there exists a nonsmooth bifurcation for these types of systems. In particular we prove that for $b=b^*(a)$ we have a strange nonchaotic attractor.
    It is worth to mention that the four families are piecewise-linear versions of smooth families 
    which seem to have nonsmooth bifurcations. Moreover, as far as we know, we give the first example of a family with a nonsmooth period-doubling bifurcation.

    \noindent\small\textbf{Keywords:} 
        Strange nonchaotic attractor; Nonsmooth bifurcation; Period-doubling bifurcation; Quasi-periodic forcing; Piecewise-linear.

    \noindent\small\textbf{2020 Mathematics Subject Classification:} 37D25; 37D45.
\end{abstract}

\newpage
\section{Introduction}
In this paper, we investigate the dynamics of four families of piecewise-linear quasiperiodically forced maps. For these families, we want to study a certain type of (nonsmooth) bifurcations involving invariant and two-periodic curves. These bifurcations are related to the existence of Strange Nonchaotic Attractors (SNA). A two-periodic curve is a curve which is invariant under the second iterate of the system, see Definition \ref{def_twi_inv} for a formal definition. For the cases we consider, we say that a bifurcation is nonsmooth whenever, at the bifurcation parameter, some of the invariant or two-periodic curves involved in the bifurcation fail to be continuous. We prove that the families we study exhibit nonsmooth bifurcations.

The motivation to study these four maps is due to the previous study in \cite{jorba2024nonsmooth}. In this work, the authors study the following quasiperiodically forced map, for $a>0$, $b\in\R$ and $\omega\not\in2\pi\mathbb{Q}$,
\begin{align*}
    \begin{cases}
        \Bar{x} = h_{a}(x) - b \sin(\theta),\\
        \Bar{\theta} = \theta + \omega \mod2\pi,
    \end{cases}
    &\hspace{1cm}
    h_{a}(x)= 
    \begin{cases}
        -\frac{\pi}{2} &\text{if }x\in(-\infty,-\frac{\pi}{2a}),\\
        ax & \text{if }x\in[-\frac{\pi}{2a},\frac{\pi}{2a}],\\
        \frac{\pi}{2} &\text{if }x\in(\frac{\pi}{2a},\infty).
    \end{cases}
\end{align*}
They prove the existence of a value of a parameter $b=b^*(a)$ where a nonsmooth pitchfork bifurcation occurs. 
This system is a piecewise-linear approximation of the smooth system studied in \cite{Jorba2018Mu} and \cite{jager2009}.
The present study generalizes and extends the results of \cite{jorba2024nonsmooth} by identifying further models for which similar statements hold and establishing additional properties of the systems. We will consider four quasiperiodically forced maps of the form
\begin{align*}
    \begin{cases}
        \Bar{x} = h(x) + b g(\theta),\\
        \Bar{\theta} = \theta + \omega \mod2\pi,
    \end{cases}
\end{align*}
where $b\in\R$ is a real parameter, $\omega\not\in2\pi\mathbb{Q}$, $h\colon\R\to\R$ is a piecewise-linear continuous function and $g\colon\T\to\R$ is a sufficiently smooth function. The piecewise-linear functions we will present have constant and linear parts. The linear part depend on a parameter $a\in\R$ which will encode the expanding behaviour of the system. The nonsmooth bifurcations will arise when $\abs{a}>1$. For the four systems we consider, we show the existence of a curve of bifurcation $b^*(a)$ in the parameter plane where a nonsmooth bifurcation occurs. The curve of bifurcation can be expressed in terms of the absolute maximum or minimum of one of the invariant curves. Once we consider a particular $g$, we are able to explicitly determine the parameter values at which a nonsmooth bifurcation and an associated fractalization phenomenon occurs. 
Concretely, we show the existence of piecewise-linear versions of nonsmooth saddle-node, nonsmooth pitchfork, and nonsmooth period-doubling bifurcations. We want to make emphasis on the fact that, as far as we know, this is the first example of a family of quasiperiodically forced maps where it can be proved that there exists a nonsmooth period-doubling bifurcation. We avoid the case $a=1$ because, in this case, the linear part of the piecewise-linear systems becomes the identity map. In the non-linear case this behaviour is not generic.

We now summarize the key contributions and main results of the present study. As we will justify later, it is enough to study the bifurcation for $b>0$. We start with the case $\abs{a}>1$.
\begin{itemize}
    \item For $0<b<b^*(a)$, we establish the quantity of invariant or two periodic curves the systems have and we prove that the invariant or two periodic curves are piecewise of the same regularity as $g$. 
    \item For $b=b^*(a)$, we show that the systems undergo nonsmooth bifurcations. Usually, in computer simulations, the existence of a nonsmooth bifurcation is perceived through a fractalization phenomenon of the invariant curves of the system, when $b$ tends to $b^*(a)$. See, for example, \cite{Jorba2007Old} and the references therein. For more details we refer to Section \ref{fractalization_section}, where we define the fractalization of invariant curves. Unfortunately, this is not a characterization of the existence of a nonsmooth bifurcation (see Remark 2.4.3 and examples 2.4.4 - 2.4.5 in \cite{cobzacs2019lipschitz}). Nevertheless, under  suitable hypotheses, we are able to prove that a fractalization phenomenon is a necessary condition for the existence of a nonsmooth bifurcation, see Theorem \ref{weak_frac_thm}. In contrast, the computer simulations of the four piecewise-linear systems and the smooth counterparts barely show this fractalization phenomenon (compare Figures \ref{pic_perioddoubling} and \ref{pic_perioddoubling_smth}). This could be due to the fact that when two invariant or one invariant and one two-periodic curves collide, the set of angles corresponding to their intersection is residual but has zero measure, see Theorem \ref{big_thm}. The corresponding noncontinuous attracting invariant curve, which appears for this bifurcation, is called a Strange Nonchaotic Attractor: it is not the union of finitely many smooth manifolds and it has a zero Lyapunov exponent (due to the irrational rotation) and a negative Lyapunov exponent. In the case of having a noncontinuous two-periodic invariant curve, we say that the SNA is the union of the curve and its image. For an in-depth discussion of the different definitions of SNA, see \cite{Als2009Ondef}. 
    
    Additionally, if we fix $b=b^*(a)$, we can construct a sequence of continuous curves $\{\varphi_n\}_n$ which converge pointwise to the attracting curve. The graphs of $\{\varphi_n\}_n$ begin to exhibit wrinkling as $n\to\infty$. A curve is said to wrinkle when it exhibits progressively finer foldings, see Corollary \ref{char_corollary} for a more precise description of this behaviour. In Remark 3 of Keller's paper, the author raises the question of whether, when two invariant curves collide, the graph of one curve densely fills the region delimited by the two curves. Concerning this question, in Proposition \ref{densegraph} we prove that, at $b=b^*(a)$, our systems have an attracting invariant or two-periodic noncontinuous curve that is dense in a region with positive two-dimensional Lebesgue measure. Moreover, in Remark \ref{keller_remark} we comment that the proof of Proposition \ref{densegraph} also applies to Keller's systems. As a final conclusion to the study of these systems, in Theorem \ref{mon_characterization}, we give a criteria for a nonsmooth bifurcation in terms of the convergence of a particular sequence of curves.
    \item For $b>b^*(a)$, we prove that some of invariant or two periodic curves that exist for $0<b\leq b^*(a)$ disappear and the remaining curves are piecewise of the same regularity as $g$. 
\end{itemize}

For the case $\abs{a}<1$, using the Banach Fixed Point Theorem we prove the existence of a unique Lipschitz invariant curve, for all $b\in\R$.

Similar results have been proved for other classes of maps \cite{keller1996sna,Bjer2009sna,jorba2024nonsmooth,Fuh2016nonsmooth}. In the case of continuous dynamical systems, \cite{Dueñas2023Bif,Núñez2008Anon,Dueñas2025Sadd,Dueñas2024Gen} provide rigorous proofs of the existence of nonsmooth bifurcations of some families of nonautonomous scalar differential equations.

To finish with the introduction, we give a brief outline of the contents in the following sections.
Below in Section \ref{dyn_sys_sect}, we introduced the quasiperiodically forced systems. In Section \ref{invariant_curves_sect}, we present the invariant curves of the systems and some basic properties concerning these curves. In Section \ref{main_sect}, we state the main results of the work: we describe the bifurcation of the systems, we give conditions for which we can prove the nonsmoothness of the bifurcation, we describe the attracting behaviour of the attracting set for the parameters of bifurcation, we show the closure of an attracting curve is a set with positive two-dimensional Lebesgue measure for the parameters of bifurcation and, finally, we describe the fractalization process in these systems and we give a characterization of the noncontinuity of the attracting curve in terms of the uniform convergence of a sequence of continuous curves. In Section \ref{proof_sect}, we provide the detailed proofs for the system \eqref{dyn_sys_4}, with some comments for the other systems, and some other supplementary results about the systems and the invariant curves. In Sections \ref{main_sect}, \ref{section_b*}, \ref{three_curves_secion}, \ref{sect_after}, \ref{fractalization_section} we study the nonuniformly contractive case (when we can have an SNA) and in Section \ref{one_inv_curv} we consider the uniformly contracting case (when we have a unique attracting invariant curve). Concretely, in Section \ref{section_b*} there are the proofs of Theorems \ref{big_thm}, \ref{finite_num_iter}, \ref{densegraph} and Proposition \ref{lyap_exp}, these statements describe the properties of the systems at the bifurcation parameter. In Section \ref{three_curves_secion} there is the proof of Proposition \ref{prop_onlyonecurve} which describes the properties of the systems after the bifurcation parameter. In Section \ref{sect_after} there are the proofs of Proposition \ref{curve_between} and Theorem \ref{regularity_thm}, they describe the properties of the systems before the bifurcation parameter. In Section \ref{fractalization_section} there are the proofs of Theorems \ref{weak_frac_thm} and \ref{mon_characterization}, which show fractalization of the invariant curves under certain conditions. Finally, in Section \ref{one_inv_curv} there is the proof of Theorem \ref{thm_uniform_contraction_case} which describes the properties of the systems.

\subsection{The dynamical systems}\label{dyn_sys_sect}
Let $F\colon\R\times\T^1\to\R\times\T^1$ be a quasiperiodically forced map $(\Bar{x}, \Bar{\theta})=F(x,\theta)$ of the form
\begin{align*}
    \begin{cases}
        \Bar{x} = h(x) + b g(\theta),\\
        \Bar{\theta} = \theta + \omega \mod2\pi,
    \end{cases}
\end{align*}
where $b\in\R$ is a real parameter, $\omega\not\in2\pi\mathbb{Q}$, and $h\colon\R\to\R$ and $g\colon\T\to\R$ are continuous functions. Let $\T^\R$ denote the set of functions from $\T$ to $\R$. To such a system $F$, we associate the operator $\mathcal{F}\colon\T^\R\to \T^\R$ given by
\begin{align*}
    \mathcal{F}(\varphi)(\theta) = h(\varphi(\theta-\omega)) + b g(\theta-\omega).
\end{align*}
\begin{definition}\label{def_twi_inv}
    We say that a function $\varphi\in\T^\R$ is an \textbf{invariant curve} of a system of the form $F$ if $\mathcal{F}(\varphi)=\varphi$. A curve $\varphi\in\T^\R$ is a \textbf{two-periodic} curve of a system of the form $F$ if $\mathcal{F}^2(\varphi)=\varphi$ and $\mathcal{F}(\varphi)\not=\varphi$.
\end{definition}
Throughout the work we study four systems of this kind $F_i\colon\R\times\T^1\to\R\times\T^1$, for $i=1,2,3,4$. The maps $h_i$ appearing in the systems $F_i$ are defined piecewise with linear and constant components. We proceed to introduce the maps that will be the subject of our analysis in a more precise form.
\begin{itemize}
    \item The map $F_1$ is a quasiperiodically forced piecewise-linear version of a supercritical pitchfork bifurcation. It is given by
    \begin{align}\label{dyn_sys}
        \begin{cases}
            \Bar{x} = h_{1}(x) - b g(\theta),\\
            \Bar{\theta} = \theta + \omega \mod2\pi,
        \end{cases}
        &\hspace{1cm}
        h_{1}(x)= 
        \begin{cases}
            -\frac{\pi}{2} &\text{if }x\in(-\infty,-\frac{\pi}{2a}),\\
            ax & \text{if }x\in[-\frac{\pi}{2a},\frac{\pi}{2a}],\\
            \frac{\pi}{2} &\text{if }x\in(\frac{\pi}{2a},\infty),
        \end{cases}
    \end{align}
    with $a>0$.
    
    \item The map $F_2$ is a quasiperiodically forced piecewise-linear version of a subcritical pitchfork bifurcation. It is given by
    \begin{align}\label{dyn_sys_2}
        \begin{cases}
            \Bar{x} = h_{2}(x) + b g(\theta),\\
            \Bar{\theta} = \theta + \omega \mod2\pi,
        \end{cases}
        &\hspace{1cm}
        h_{2}(x)= 
        \begin{cases}
            a(x+\delta) &\text{if }x\in(-\infty,-\delta),\\
            0 & \text{if }x\in[-\delta,\delta],\\
            a(x-\delta) &\text{if }x\in(\delta,\infty),
        \end{cases}
    \end{align}
    with $a>0$ and $\delta>0$.

    \item The map $F_3$ is a quasiperiodically forced piecewise-linear version of a saddle-node bifurcation. It is given by
    \begin{align}\label{dyn_sys_3}
        \begin{cases}
            \Bar{x} = h_{3}(x) + b g(\theta),\\
            \Bar{\theta} = \theta + \omega \mod2\pi,
        \end{cases}
        &\hspace{1cm}
        h_{3}(x)= 
        \begin{cases}
            ax & \text{if }x>-\frac{1}{a},\\
            -1 &\text{if }x\leq-\frac{1}{a},
        \end{cases}
    \end{align}
    with $a>0$.

    \item The map $F_4$ is a quasiperiodically forced piecewise-linear version of a period-doubling bifurcation. It is given by
    \begin{align}\label{dyn_sys_4}
        \begin{cases}
            \Bar{x} = h_{4}(x) - b g(\theta),\\
            \Bar{\theta} = \theta + \omega \mod2\pi,
        \end{cases}
        &\hspace{1cm}
        h_{4}(x)= 
        \begin{cases}
            ax & \text{if }x>\frac{1}{a},\\
            1 &\text{if }x\leq \frac{1}{a},
        \end{cases}
    \end{align}
    with $a<0$.
\end{itemize}

Note that the invariant curves of the map $F_i$ are precisely the fixed points of the corresponding operator $\mathcal{F}_i$. The function $g$ is assumed to be a $C^{1+\tau}(\T)$ function, for some $\tau>0$.
Since our study is heavily dependent on the parameter $a$, we separate the study of the quasiperiodically forced maps in terms of this parameter. To show the existence of nonsmooth bifurcations in the four piecewise-linear systems, we will use similar arguments to the ones in \cite{jorba2024nonsmooth}. It is important to note that for $i=1,2,3$ the map $h_i$ is monotone increasing and for $i=4$ the map $h_4$ is monotone decreasing. Hence, the map $h_4^2$ is monotone increasing. We have introduced each dynamical system with a parameter $b$ which is any real number, but for the study of the system we will only consider $b>0$. Note that for the systems \eqref{dyn_sys}, \eqref{dyn_sys_2} and \eqref{dyn_sys_4} the change $(b,x,\Bar{x})\to(-b,-x,-\Bar{x})$ allows results to be extended from $b>0$ to $b<0$. In the following sections we introduce the study of system \eqref{dyn_sys_4}, which is the most different of the four families. The study of the other systems can be deduced from similar arguments. For completeness, the statements of the results are written for the four systems. In the proofs, we comment how to obtain the results for the rest of the systems. To be more precise, the system \eqref{dyn_sys} and \eqref{dyn_sys_2} are related to the system in \cite{jorba2024nonsmooth}. The system \eqref{dyn_sys_3} is a piecewise-linear approximation of the non-linear system
\begin{align*}
    \begin{cases}
        \Bar{x} = \exp(ax) - b g(\theta),\\
        \Bar{\theta} = \theta + \omega \mod2\pi,
    \end{cases}
\end{align*}
with $a>0$ and $b\in\R$.
The system \eqref{dyn_sys_4} is a piecewise-linear approximation of the non-linear system
\begin{align}\label{dyn_nonlin_period}
    \begin{cases}
        \Bar{x} = 1-\exp(ax) - b g(\theta),\\
        \Bar{\theta} = \theta + \omega \mod2\pi,
    \end{cases}
\end{align}
with $a>0$ and $b\in\R$. A preliminary analysis of these systems seems to indicate that their dynamical behaviour is similar to the piecewise-linear
ones. A more complete numerical study of these non-linear systems is work in progress.

Figure \ref{pic_perioddoubling_smth} shows the structure of the invariant set for the system \eqref{dyn_nonlin_period}. For the plots we have chosen $\omega=\pi(\sqrt{5}-1)$ and $a=3$. For the periodic function we use $g(\theta)=1+\cos(\theta)$. 
The green curves are two-periodic and the red one is a repelling invariant curve of the system \eqref{dyn_nonlin_period}. 

Figure \ref{pic_perioddoubling} shows the structure of the invariant set for the system \eqref{dyn_sys_4}. For the plots we have chosen $\omega=\pi(\sqrt{5}-1)$ and $a=-3$. For the periodic function we use again $g(\theta)=1+\cos(\theta)$. The green curves are two-periodic curves of the map $F_4$, and the red curve is the repelling invariant curve. In the bifurcation, we will prove that the two-periodic curves (green) and the invariant curve (red) are different. Moreover, the intersection set of the curves has zero Lebesgue measure. Therefore, numerical simulations cannot capture how the two-periodic curves densely fill the area between them. In Section \ref{main_sect}, we will prove that the curves densely fill the area and that there are no more invariant or two-periodic curves than those shown in the figure \ref{pic_perioddoubling}, see Corollary \ref{coro_numbercurves}.

\begin{figure}[htbp]
  \centering
  
  \begin{subfigure}[b]{0.47\textwidth}
    \includegraphics[width=\textwidth]{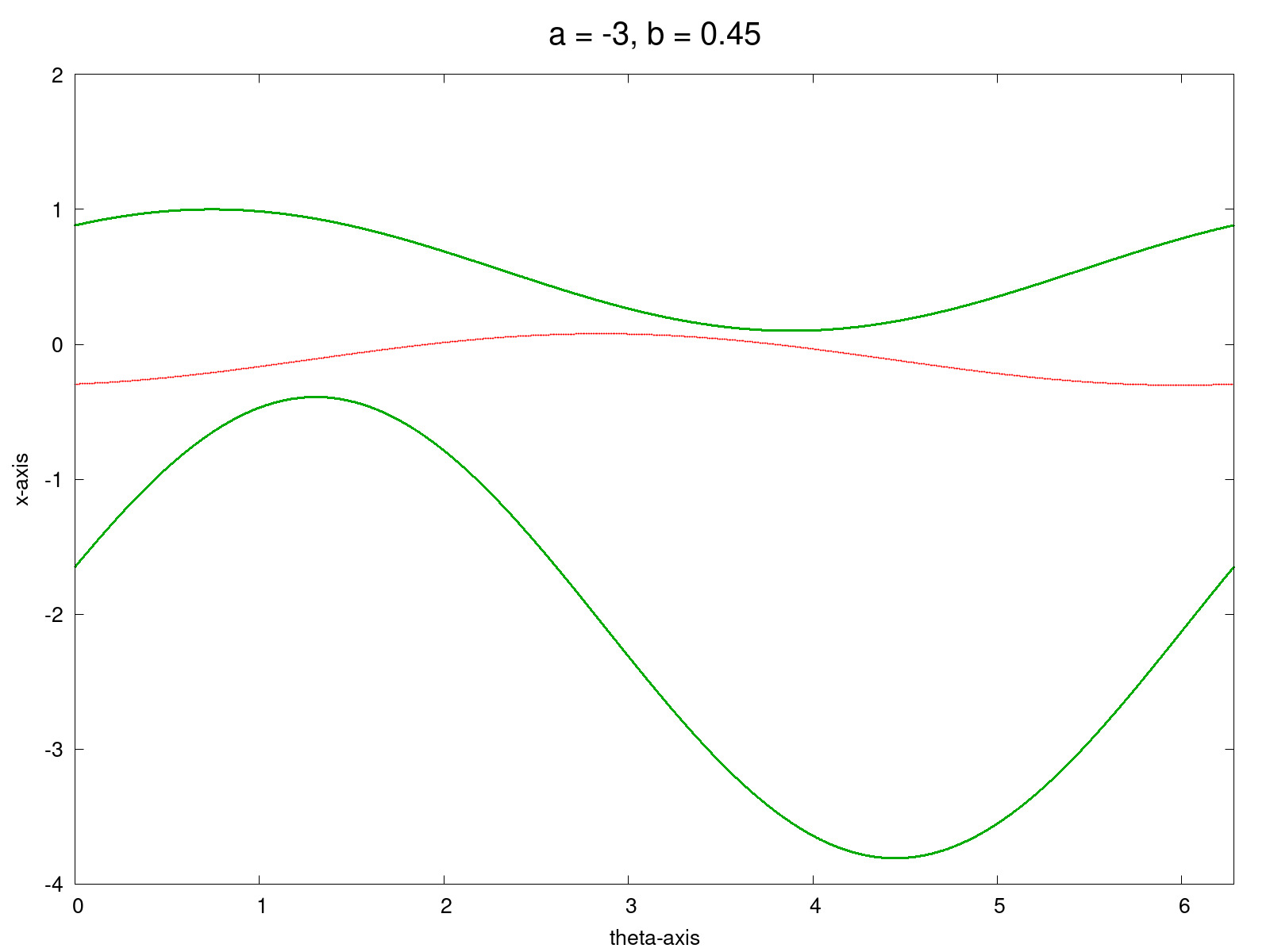}
  \end{subfigure}
  \hfill
  \begin{subfigure}[b]{0.47\textwidth}
    \includegraphics[width=\textwidth]{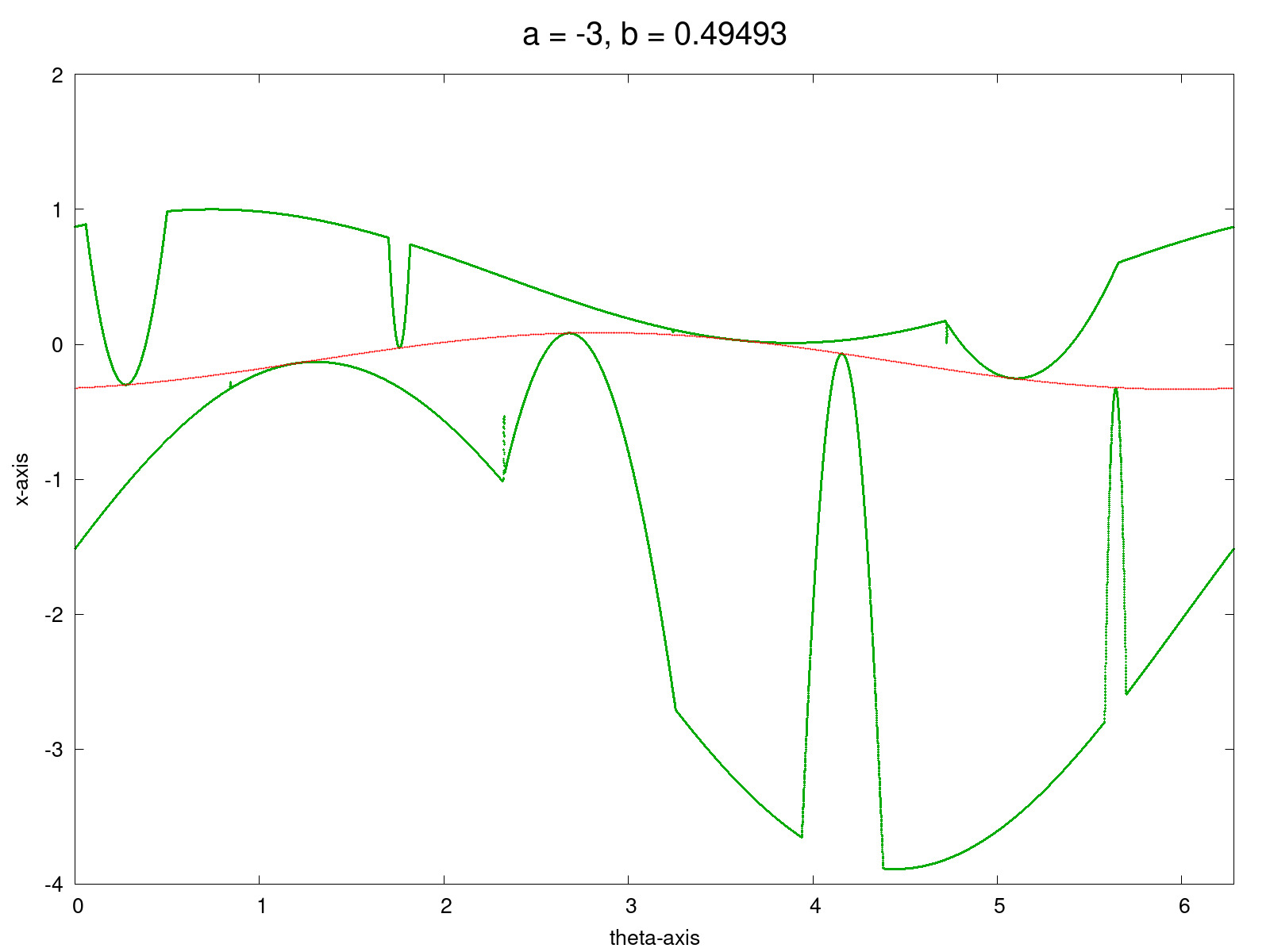}
  \end{subfigure}
  \hfill
  \begin{subfigure}[b]{0.47\textwidth}
    \includegraphics[width=\textwidth]{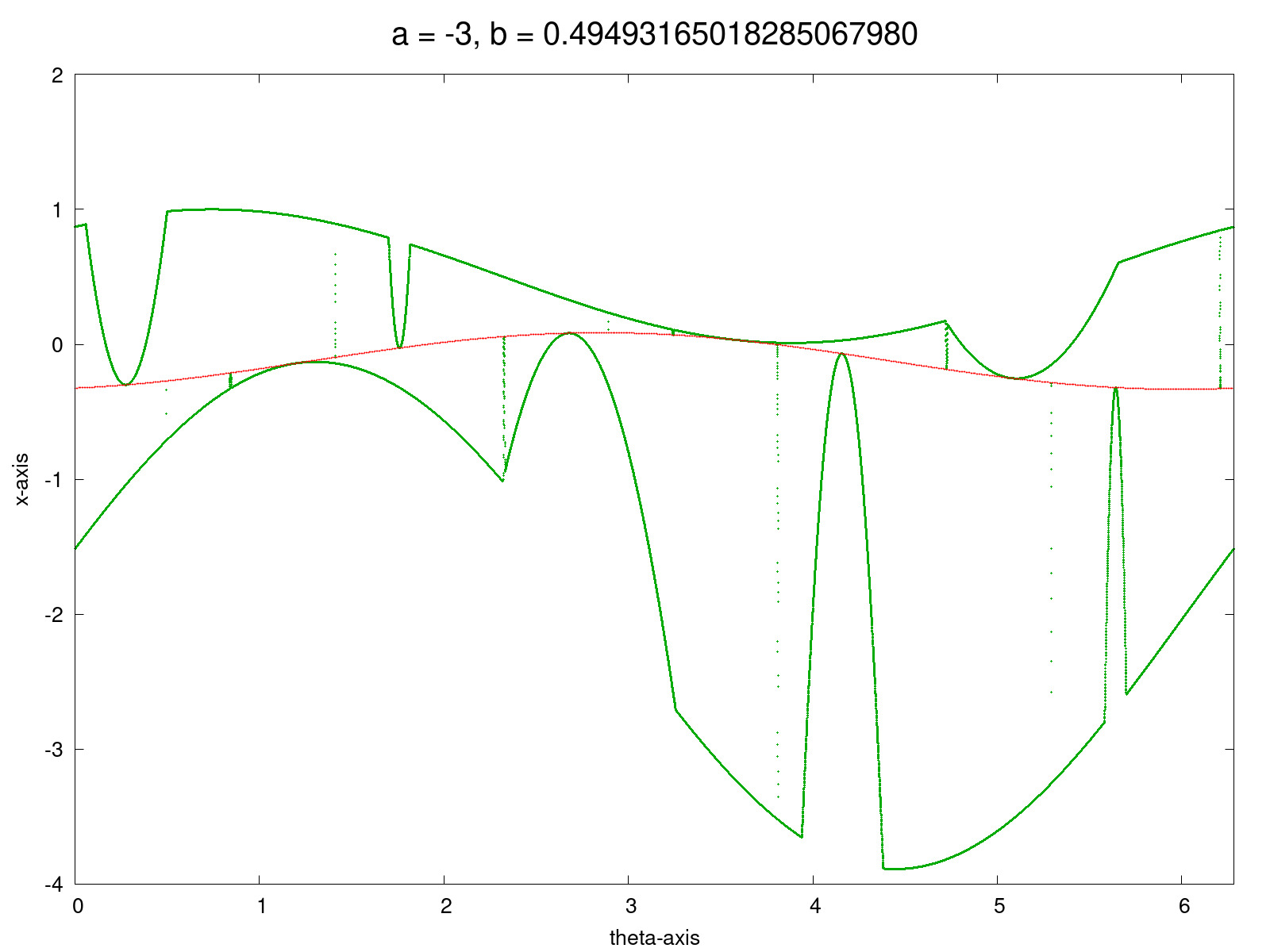}
  \end{subfigure}
  \hfill
  \begin{subfigure}[b]{0.47\textwidth}
    \includegraphics[width=\textwidth]{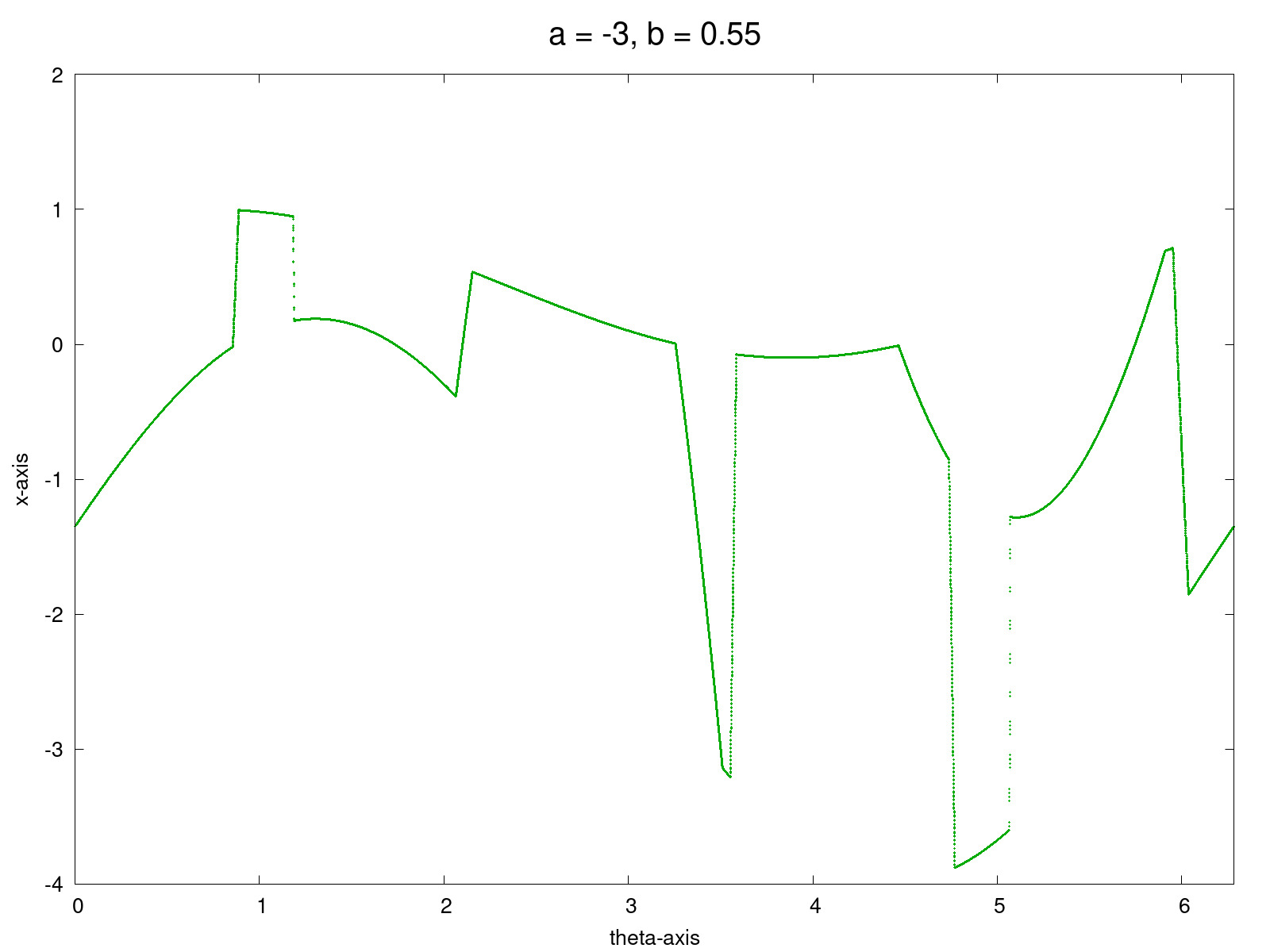}
  \end{subfigure}

  \caption{Two-periodic attracting (green) and repelling (red) curves of \eqref{dyn_sys_4} for fixed $a=-3$ and different values of $b$.}
  \label{pic_perioddoubling}
\end{figure}

\begin{figure}[htbp]
  \centering
  
  \begin{subfigure}[b]{0.47\textwidth}
    \includegraphics[width=\textwidth]{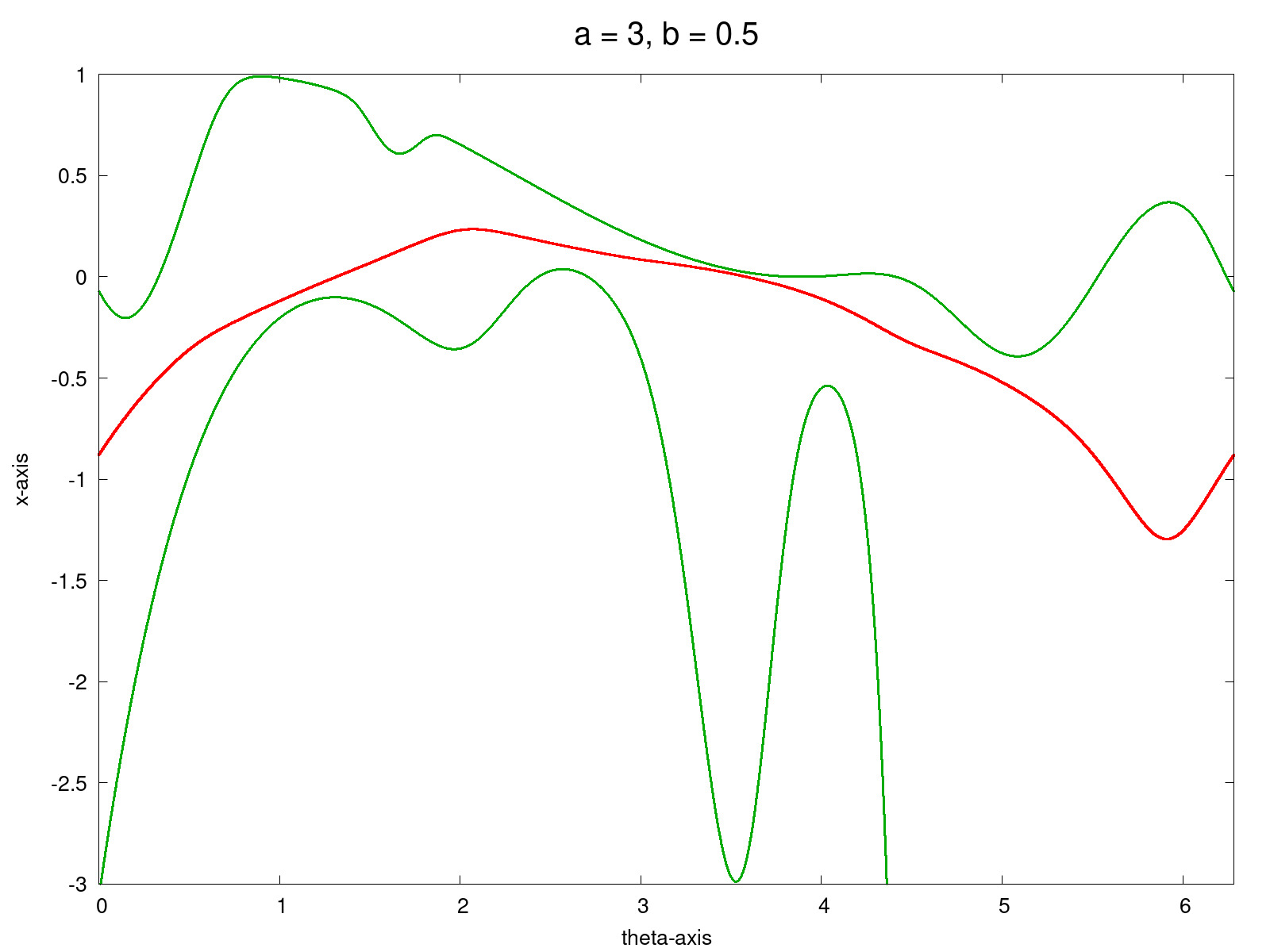}
  \end{subfigure}
  \hfill
  \begin{subfigure}[b]{0.47\textwidth}
    \includegraphics[width=\textwidth]{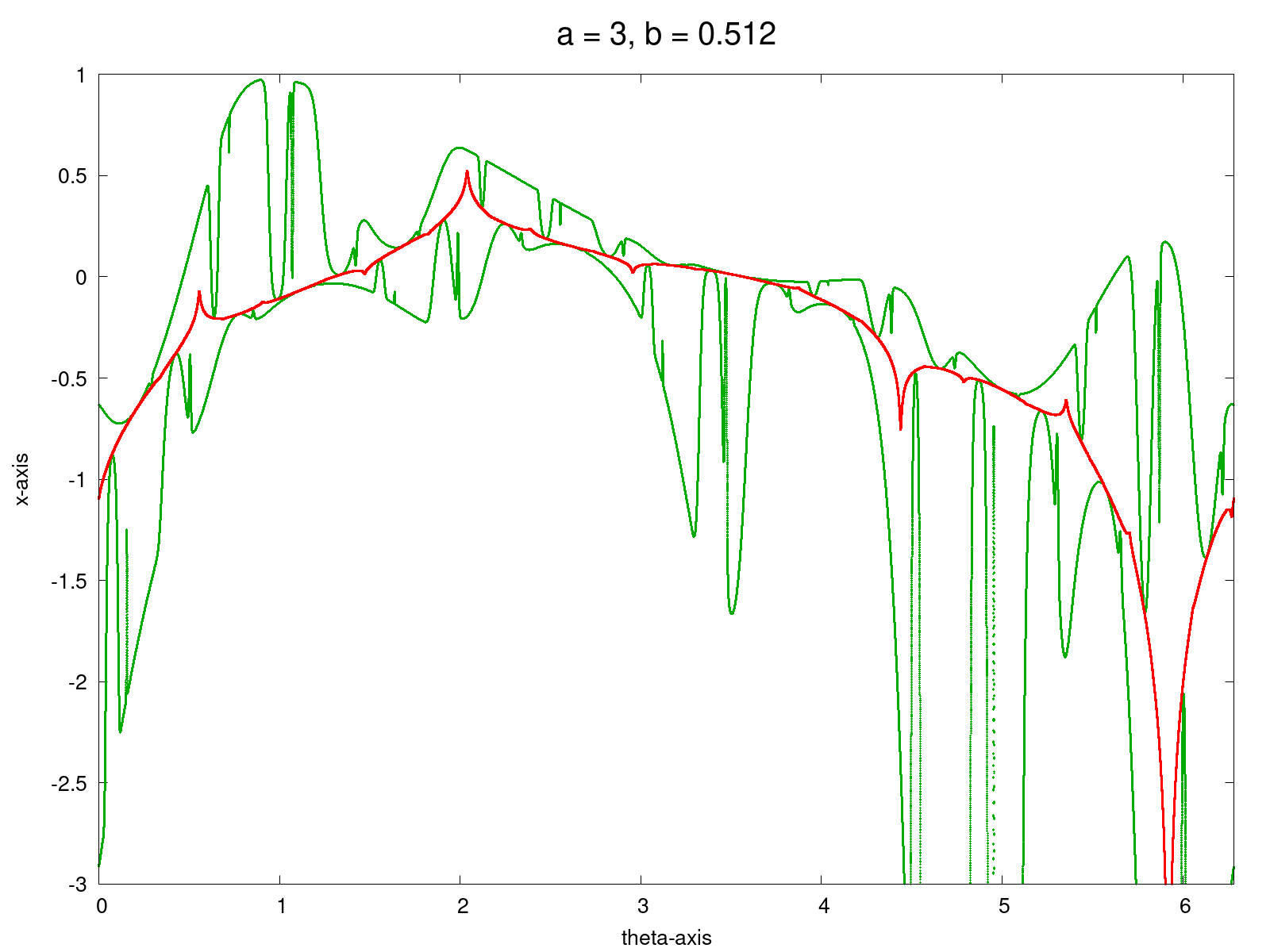}
  \end{subfigure}
  \hfill
  \begin{subfigure}[b]{0.47\textwidth}
    \includegraphics[width=\textwidth]{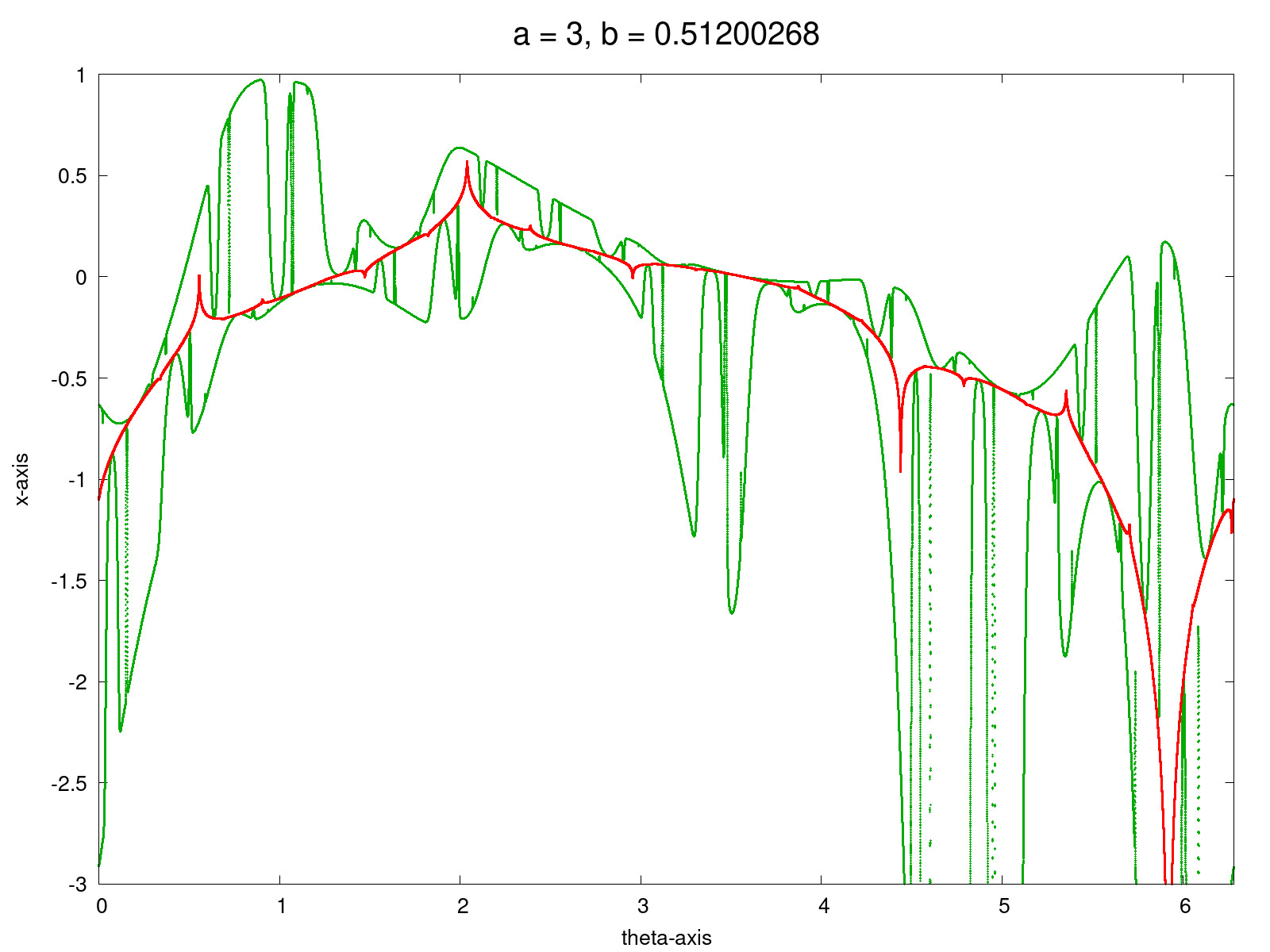}
  \end{subfigure}
  \hfill
  \begin{subfigure}[b]{0.47\textwidth}
    \includegraphics[width=\textwidth]{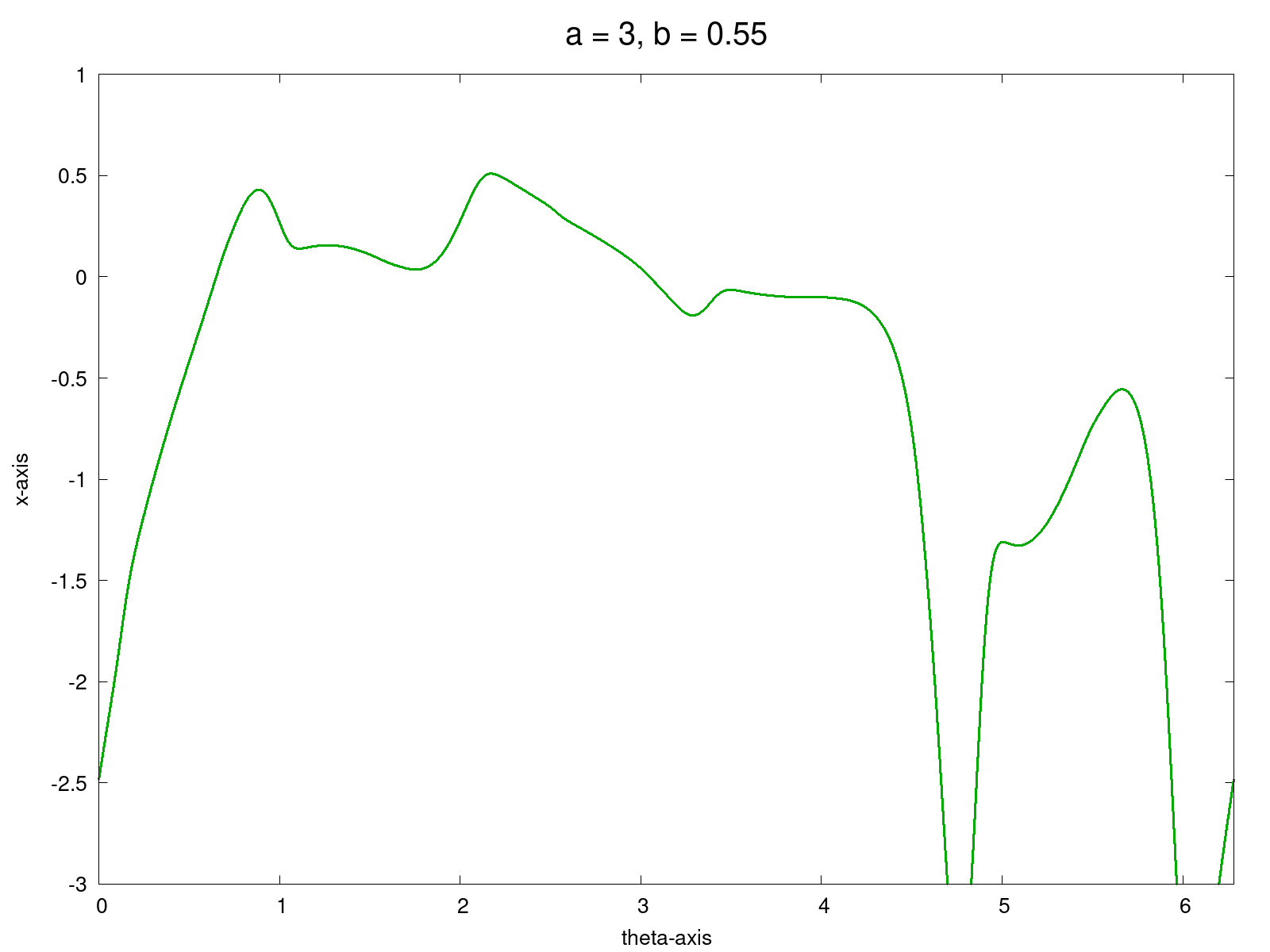}
  \end{subfigure}

  \caption{Attracting (green) and repelling (red) curves of the non-linear system \eqref{dyn_nonlin_period} for fixed $a=3$ and different values of $b$.}
  \label{pic_perioddoubling_smth}
\end{figure}

\section{The invariant curves of the systems}\label{invariant_curves_sect}
In this section, we study the non-uniform contraction (see Assumption \ref{assumptions_a}) case for the systems we have introduced previously. The arguments we use in the proofs follow the ideas from \cite{jorba2024nonsmooth}. As we have mentioned before, in order to avoid unnecessary repetition, we give only the complete proofs for the system \eqref{dyn_sys_4} and we comment briefly on how the proofs for the other systems may be carried out. Recall that, as we have mentioned in the introduction, we can reduce the study to the case with $b>0$. We start the section with an important assumption to which we will refer throughout the following sections.
\begin{assumption}\label{assumptions_a}
    For the systems \eqref{dyn_sys}, \eqref{dyn_sys_2} and \eqref{dyn_sys_3} we assume $a>1$ and for the \eqref{dyn_sys_4} we assume $a<-1$.
\end{assumption}
Notice that this assumption guarantees that there is no uniform contraction on each of the systems.
From now on we need to work with the invariant curves, so we start by computing them for small enough $b>0$. Assume that, for $i=1,3,4$, we have an invariant curve $\mu_i$ of the system $F_i$ such that the image of $\mu_i$ is on the linear part of $h_i$, then
\begin{align}\label{inv_eq_mu}
    \mu_i(\theta+\omega) &= h_{i}(\mu_i(\theta)) - b g(\theta).
\end{align}
Since we assume that the image of $\mu_i$ is on the linear part of $h_i$, if $g$ is a trigonometric polynomial, then equation \eqref{inv_eq_mu} has a solution $\mu_i$ which is a trigonometric polynomial. If $g$ can be expressed as a Fourier series, then the solution $\mu_i$ can be found using standard Fourier analysis. We explain how to obtain the solution after the proof of Lemma \ref{lemma_distance}. The curve $\mu_i$ is invariant as long as it stays on the region where $h_i$ is linear. Since every piecewise-linear $h_i$ has two or three different components, if $b=0$, there exists one invariant or two-periodic curve for each component. When $b>0$ is sufficiently small, the curves defined below are invariant or two-periodic. For larger $b$, they serve as a seed of an iteration procedure to find the invariant or two-periodic curves. We proceed to define the curves in question depending on the parameters $a$ and $b$: 
\begin{itemize}
    \item For \eqref{dyn_sys}, we consider the following curves
    \begin{align*}
        \tilde\varphi_1(\theta) &= \frac{\pi}{2} - b g(\theta-\omega),\\
        \tilde\gamma_1(\theta) &= -\frac{\pi}{2} - b g(\theta-\omega),
    \end{align*}
    and $\mu_1$ the continuous solution of the functional equation
    \begin{align*}
        \mu_1(\theta+\omega) = a\mu_1(\theta) - b g(\theta).
    \end{align*}
    \item For \eqref{dyn_sys_2}, we have two parts where the system is linear. Hence, we consider the following curves
    \begin{align*}
        \tilde\varphi_2(\theta) = b g(\theta-\omega),
    \end{align*}
    and $\mu_2$ and $\hat\mu_2$ the continuous solutions of the functional equations
    \begin{align*}
        \mu_2(\theta) = a(\mu_2(\theta-\omega)-\delta) +b g(\theta-\omega),\\
        \hat\mu_2(\theta) = a(\hat\mu_2(\theta-\omega)+\delta) +b g(\theta-\omega).
    \end{align*}
    \item For \eqref{dyn_sys_3}, we consider the following curves
    \begin{align*}
        \tilde\varphi_3(\theta) &= -1 + b g(\theta-\omega).
    \end{align*}
    and $\mu_3$ the continuous solution of the functional equation
    \begin{align*}
        \mu_3(\theta+\omega) = a\mu(\theta) + b g(\theta).
    \end{align*}
    \item For \eqref{dyn_sys_4}, we consider the following curves
    \begin{align*}
        \tilde\varphi_4(\theta) &= 1 - b g(\theta-\omega).
    \end{align*}
    and $\mu_4$ the continuous solution of the functional equation
    \begin{align}\label{mu_fun}
        \mu_4(\theta+\omega) = a\mu(\theta) - b g(\theta).
    \end{align}
\end{itemize}
If $b>0$ is small enough, these curves are invariant, for the systems \eqref{dyn_sys}, \eqref{dyn_sys_2} and \eqref{dyn_sys_3}.
In contrast, for the system \eqref{dyn_sys_4}, the curve $\mu_4$ is invariant but the curve $\tilde\varphi_4$ is not. What happens is that $\tilde\varphi_4$ is a two-periodic curve. In order to show that $\tilde\varphi_4$ is two-periodic curve, we need to consider a fixed curve for the map iterated two times. The two-iteration map of system \eqref{dyn_sys_4} has the following form
\begin{align}\label{dyn_sys_4_two}
    \begin{cases}
        \Bar{\Bar{x}} = h_a(h_{a}(x) - b g(\theta)) - b g(\theta+\omega),\\
        \Bar{\Bar{\theta}} = \theta + 2\omega \mod2\pi.
    \end{cases}
\end{align}
For small enough $b>0$ and $a<-1$, we have that $\tilde\varphi_4(\theta)>\frac{1}{a}$. Then,
\begin{align*}
    \pi_1\circ F_4(\tilde\varphi_4(\theta),\theta) = h_{4}(\tilde\varphi_4(\theta)) - b g(\theta) = a\tilde\varphi_4(\theta) - b g(\theta).
\end{align*}
Hence, for small enough $b>0$, we obtain that
\begin{align*}
    \pi_1\circ F_4^2(\tilde\varphi(\theta),\theta) = h_a(h_{a}(\tilde\varphi(\theta)) - b g(\theta)) - b g(\theta+\omega) = 1 - b g(\theta+\omega) = \tilde\varphi(\theta+2\omega).
\end{align*}

We know that the collision of invariant curves can lead to the noncontinuity (see \cite{keller1996sna,jorba2024nonsmooth}) of some of the colliding curves. The following result give an expression for the distance between two of the curves.
\begin{lemma}\label{lemma_distance}
    For each system, with Assumption \eqref{assumptions_a}, we have the following:
    \begin{itemize}
        \item For the system \eqref{dyn_sys} we have 
        \begin{align*}
            \min_{\theta\in\T}\left(\tilde\varphi_1(\theta)-\mu_1(\theta)\right)&=a\min_{\theta\in\T}\left(\frac{\pi}{2a}-\mu_1(\theta-\omega)\right),\\
            \min_{\theta\in\T}\left(\mu_1(\theta)-\tilde\gamma_1(\theta)\right)&=a\min_{\theta\in\T}\left(\mu_1(\theta-\omega)+\frac{\pi}{2a}\right).
        \end{align*}
        \item For the system \eqref{dyn_sys_2} we have
        \begin{align*}
            \min_{\theta\in\T}\left(\mu_2(\theta)-\tilde\varphi_2(\theta)\right)&=a\min_{\theta\in\T}\left(\mu_2(\theta-\omega)-\delta\right),\\
            \min_{\theta\in\T}\left(\hat\mu_2(\theta)-\tilde\varphi_2(\theta)\right)&=a\min_{\theta\in\T}\left(\hat\mu_2(\theta-\omega)+\delta\right).
        \end{align*}
        \item For the system \eqref{dyn_sys_3} we have
        \begin{align*}
            \min_{\theta\in\T}\left(\mu_3(\theta)-\tilde\varphi_3(\theta)\right)&=a\min_{\theta\in\T}\left(\mu_3(\theta-\omega)+\frac{1}{a}\right).
        \end{align*}
        \item For the system \eqref{dyn_sys_4} we have
        \begin{align*}
            \min_{\theta\in\T}\left(\tilde\varphi_4(\theta)-\mu_4(\theta)\right)&=a\min_{\theta\in\T}\left(\frac{1}{a}-\mu_4(\theta-\omega)\right).
        \end{align*}
    \end{itemize}
\end{lemma}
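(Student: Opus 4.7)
The plan is to derive, for each of the four systems, a pointwise identity of the form
\[
\bigl(\text{curve difference}\bigr)(\theta) \;=\; a\cdot\bigl[\text{constant} \;-\; \mu_i(\theta-\omega)\bigr],
\]
and then take $\min_{\theta\in\T}$ on both sides, moving $a$ outside the minimum. Since $\theta\mapsto\theta-\omega$ is a homeomorphism of $\T$, the $-\omega$ shift inside $\mu_i$ is harmless at the level of minima.

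For the focus system \eqref{dyn_sys_4}, substitute $\theta\to\theta-\omega$ in \eqref{mu_fun} to get $\mu_4(\theta) = a\mu_4(\theta-\omega) - bg(\theta-\omega)$, i.e.\ $bg(\theta-\omega) = a\mu_4(\theta-\omega) - \mu_4(\theta)$. Plugging this into the defining formula $\tilde\varphi_4(\theta) = 1 - bg(\theta-\omega)$ and subtracting $\mu_4(\theta)$, the two $\mu_4(\theta)$ terms cancel and we obtain
\[
\tilde\varphi_4(\theta) - \mu_4(\theta) \;=\; 1 - a\mu_4(\theta-\omega) \;=\; a\!\left(\tfrac{1}{a} - \mu_4(\theta-\omega)\right).
\]
Taking $\min_{\theta\in\T}$ of both sides finishes this case.

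The same trick works for the other three systems: the defining functional equation for $\mu_i$ is used to eliminate $bg(\theta-\omega)$ from the explicit formula for $\tilde\varphi_i$ (or $\tilde\gamma_1$ in system \eqref{dyn_sys}), and the $\mu_i(\theta)$ terms cancel, leaving the required expression. Concretely, for \eqref{dyn_sys} one uses $\mu_1(\theta) = a\mu_1(\theta-\omega) - bg(\theta-\omega)$ and obtains both $\tilde\varphi_1(\theta) - \mu_1(\theta) = a\!\left(\tfrac{\pi}{2a} - \mu_1(\theta-\omega)\right)$ and $\mu_1(\theta) - \tilde\gamma_1(\theta) = a\!\left(\mu_1(\theta-\omega) + \tfrac{\pi}{2a}\right)$; for \eqref{dyn_sys_2} one uses the respective equations for $\mu_2$ and $\hat\mu_2$ to read off $bg(\theta-\omega)$ and compare with $\tilde\varphi_2(\theta) = bg(\theta-\omega)$; and for \eqref{dyn_sys_3} one uses $\mu_3(\theta) = a\mu_3(\theta-\omega) + bg(\theta-\omega)$, so that $\mu_3(\theta) - \tilde\varphi_3(\theta) = a\!\left(\tfrac{1}{a} + \mu_3(\theta-\omega)\right)$.

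The computations are entirely algebraic and the proof presents no serious obstacle. The only delicate point I expect is tracking the sign of $a$ when pulling it outside of $\min$: for \eqref{dyn_sys}, \eqref{dyn_sys_2} and \eqref{dyn_sys_3} the hypothesis $a>1$ makes $\min(aX)=a\min X$ automatic, but for \eqref{dyn_sys_4} we have $a<-1$, so one must argue separately that the quantity $\tfrac{1}{a}-\mu_4(\theta-\omega)$ has a definite sign on $\T$ (which follows from $\mu_4$ living in the linear branch $\mu_4>\tfrac{1}{a}$) so that the equality of minima remains valid as stated. This is the only step where care is required.
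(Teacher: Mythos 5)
Your proposal takes essentially the same route as the paper: the paper's proof of Lemma \ref{lemma_distance} consists precisely of the pointwise identity $\tilde\varphi_4(\theta)-\mu_4(\theta)=1-a\mu_4(\theta-\omega)=a\left(\tfrac{1}{a}-\mu_4(\theta-\omega)\right)$ obtained from \eqref{mu_fun} together with the explicit form of $\tilde\varphi_4$, with the other three systems handled by the same substitution, and then minima are taken. So the algebraic core of your argument is exactly the published one.

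One caveat about your closing remark on the case $a<-1$: the justification you offer is not valid. Knowing that $\tfrac{1}{a}-\mu_4(\theta-\omega)$ has a definite sign does not allow you to pull a negative factor through a minimum; for $a<0$ one always has $\min_\theta\,a X(\theta)=a\max_\theta X(\theta)$, irrespective of the sign of $X$. Taken literally, the system-\eqref{dyn_sys_4} identity with $a$ outside the $\min$ would force $\min_\theta\mu_4=\max_\theta\mu_4$; the quantity that is actually consistent with the pointwise identity (and with how the lemma is used afterwards, where $\theta_m$ is a global \emph{minimum} of $\boldsymbol\mu_4$ and $b^*(a)\boldsymbol\mu_4(\theta_m)=\tfrac{1}{a}$) is $\min_\theta\left(\tilde\varphi_4(\theta)-\mu_4(\theta)\right)=a\left(\tfrac{1}{a}-\min_\theta\mu_4(\theta-\omega)\right)$. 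The paper's own proof stops at the pointwise identity and silently elides this point, so your instinct that this is the delicate step is correct, but the sign-definiteness argument does not repair it; one should instead simply record that for $a<-1$ the factor $a$ converts the $\min$ into a $\max$ (equivalently, minimize $1-a\mu_4(\theta-\omega)$ directly, which is attained at a minimizer of $\mu_4$).
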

\begin{proof}
    For the system \eqref{dyn_sys_4}. From the functional equation \eqref{mu_fun} and the explicit expression of $\tilde\varphi_4$ we can see that
     \begin{align*}
        \tilde\varphi_4(\theta)-\mu_4(\theta)&=1-a\mu_4(\theta-\omega) = a\left(\frac{1}{a}-\mu_4(\theta-\omega)\right). 
    \end{align*}
    In the other systems, it can be used either the same argument with the corresponding functional equation or the explicit expression of the curve to obtain the result.
\end{proof}

We now focus on solving equation \eqref{inv_eq_mu} for the system \eqref{dyn_sys_4}. The reasoning for other systems follows by similar arguments.
If we consider that $g$ is a trigonometric polynomial with the expression $g(\theta)=\sum_{n=0}^N g_1(n)\sin(n\theta)+g_2(n)\cos(n\theta)$, then we know that the solution $\mu_4$ has the form $\mu_4(\theta)=\sum_{n=0}^N a_n\sin(n\theta)+b_n\cos(n\theta)$. Therefore, equation \eqref{inv_eq_mu} takes the form
\begin{align*}
    &\sum_{n=0}^N a_n\sin(n(\theta+\omega))+b_n\cos(n(\theta+\omega))\\
    &=a\sum_{n=0}^N a_n\sin(n\theta)+b_n\cos(n\theta) - b \sum_{n=0}^N g_1(n)\sin(n\theta)+g_2(n)\cos(n\theta).
\end{align*}
Recall the following trigonometric identities
\begin{align*}
    \sin(n\theta+n\omega)&=\sin(n\theta)\cos(n\omega)+\cos(n\theta)\sin(n\omega),\\
    \cos(n\theta+n\omega)&=\cos(n\theta)\cos(n\omega)-\sin(n\theta)\sin(n\omega).
\end{align*}
Therefore, we obtain the following system of equations for the unknowns $\{a_n,b_n\}_{n=0}^N$
\begin{align*}
    \begin{cases}
        a_n\cos(n\omega)-b_n\sin(n\omega)=a a_n-b g_1(n),\\
        a_n\sin(n\omega)+b_n\cos(n\omega)=a b_n-b g_2(n).
    \end{cases}
\end{align*}
This system in matrix form is
\begin{align}\label{system_for_invcurv}
    \begin{pmatrix}
        \cos(n\omega)-a & -\sin(n\omega)\\
        \sin(n\omega) & \cos(n\omega)-a
    \end{pmatrix}
    \begin{pmatrix}
        a_n\\
        b_n
    \end{pmatrix}
    =
    \begin{pmatrix}
        -b g_1(n)\\
        -b g_2(n)
    \end{pmatrix}.
\end{align}
The other systems give rise to similar linear systems for the coefficients $\{a_n,b_n\}_{n=0}^N$.
The determinant of the matrix in \eqref{system_for_invcurv} is $1-2a\cos(n\omega)+a^2>(a-1)^2$. Therefore, the solutions of the system \eqref{system_for_invcurv} are
\begin{align*}
    a_n = b\frac{
    \begin{vmatrix}
            -g_1(n) & -\sin(n\omega)\\
            -g_2(n) & \cos(n\omega)-a
        \end{vmatrix}
    }{1-2a\cos(n\omega)+a^2} = b\tilde{a}_n
    \text{ and }
    b_n = b\frac{
    \begin{vmatrix}
        \cos(n\omega)-a & -g_1(n)\\
        \sin(n\omega) & -g_2(n)
    \end{vmatrix}
    }{1-2a\cos(n\omega)+a^2} = b \tilde{b}_n.
\end{align*}

In the following result, we make explicit the dependence of $\mu_i$ with respect to the parameters $(a,b,\omega)$.
\begin{prop}\label{explicit_factor_mu}
    For each system, with Assumption \eqref{assumptions_a}, we have the following decomposition:
    \begin{itemize}
        \item For the system \eqref{dyn_sys}, if $\abs{\mu_1(a,b,\omega,\theta)}\leq \frac{\pi}{2a}$, then
        \begin{align*}
            \mu_1(a,b,\omega,\theta)=b\left(\sum_{n=0}^N \tilde{a}_n(a,\omega)\sin(n\theta)+\tilde{b}_n(a,\omega)\cos(n\theta)\right) = b\boldsymbol\mu_1(a,\omega,\theta).
        \end{align*}
    
        \item For the system \eqref{dyn_sys_2},
        \begin{itemize}
            \item if $\mu_2(a,b,\omega,\theta)\geq\delta$, then
            \begin{align*}
                \mu_2(a,b,\omega,\theta) &= b\boldsymbol\mu_2(a,\omega,\theta) - \frac{\delta}{a-1} - b\frac{g_1(0)}{a-1}.
            \end{align*}
            \item if $\hat\mu_2(a,b,\omega,\theta)\leq-\delta$, then
            \begin{align*}
                \hat\mu_2(a,b,\omega,\theta) &= b\boldsymbol{\hat\mu}_2(a,\omega,\theta) + \frac{\delta}{a-1} - b\frac{g_1(0)}{a-1}.
            \end{align*}
        \end{itemize}
        
        \item For the system \eqref{dyn_sys_3}, if $\mu_3(a,b,\omega,\theta)\geq \frac{-1}{a}$, then
        \begin{align*}
            \mu_3(a,b,\omega,\theta) = b\boldsymbol\mu_3(a,\omega,\theta).
        \end{align*}
    
        \item For the system \eqref{dyn_sys_4}, if $\mu_4(a,b,\omega,\theta)\geq \frac{1}{a}$, then
        \begin{align*}
            \mu_4(a,b,\omega,\theta) = b\boldsymbol\mu_4(a,\omega,\theta).
        \end{align*}
    \end{itemize}
\end{prop}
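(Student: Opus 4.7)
The plan is to exploit the fact that, on the linear region of $h_i$, the functional equation determining $\mu_i$ is affine in both $\mu_i$ and the parameter $b$ (and, for \eqref{dyn_sys_2}, also in $\delta$), so that the $b$-dependence can be separated by superposition. The first ingredient is uniqueness: for any continuous $R\colon\T\to\R$, the equation $\phi(\theta+\omega)=a\phi(\theta)+R(\theta)$ has at most one continuous solution. Indeed, any two such solutions differ by a continuous $\phi$ satisfying $\phi(\theta+\omega)=a\phi(\theta)$, so iterating backwards yields $\phi(\theta)=a^{-n}\phi(\theta+n\omega)$. Since $\phi$ is bounded on $\T$ and $|a|>1$ by Assumption \ref{assumptions_a}, letting $n\to\infty$ forces $\phi\equiv 0$.

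For \eqref{dyn_sys}, \eqref{dyn_sys_3} and \eqref{dyn_sys_4}, the functional equation on the relevant linear region takes the form $\mu_i(\theta+\omega)=a\mu_i(\theta)+\varepsilon_i\,b\,g(\theta)$ with $\varepsilon_i\in\{+1,-1\}$. I define $\boldsymbol\mu_i(a,\omega,\theta)$ as the unique continuous solution of the same equation with $b$ replaced by $1$. Existence is provided by the Fourier analysis carried out just before the statement: the determinant $1-2a\cos(n\omega)+a^2\ge(|a|-1)^2>0$ makes the Fourier coefficients well-defined uniformly in $n$, and the regularity $g\in C^{1+\tau}$ ensures their summability, producing a continuous $\boldsymbol\mu_i$ depending only on $(a,\omega,\theta)$. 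By linearity, $b\boldsymbol\mu_i$ is also a continuous solution of the original equation, and the uniqueness step above forces $\mu_i(a,b,\omega,\theta)=b\boldsymbol\mu_i(a,\omega,\theta)$, which is the claimed decomposition.

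For \eqref{dyn_sys_2}, the equation on $\{\mu_2\ge\delta\}$ reads $\mu_2(\theta+\omega)=a\mu_2(\theta)-a\delta+b\,g(\theta)$, so the forcing splits into a constant part (combining $-a\delta$ with the mean of $g$) and a zero-mean part $b\widetilde g(\theta)$. I would look for a solution of the form $\mu_2=C(a,b,\delta)+b\boldsymbol\mu_2(a,\omega,\theta)$, where $\boldsymbol\mu_2$ is the unique continuous zero-mean solution of the equation with $b=1$ and $\delta=0$ (treated as in the previous paragraph) and $C$ is the constant particular solution determined by matching constant terms; solving the resulting scalar linear equation in $C$ and invoking uniqueness yields the stated expression. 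The analogous argument on the region $\{\hat\mu_2\le-\delta\}$ handles $\hat\mu_2$. The only slightly delicate point of the whole proof is the bookkeeping of the constant term for \eqref{dyn_sys_2}; the rest is pure superposition combined with the uniqueness lemma, both routine under Assumption \ref{assumptions_a}.
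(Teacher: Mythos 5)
Your treatment of systems \eqref{dyn_sys}, \eqref{dyn_sys_3} and \eqref{dyn_sys_4} is correct and in substance the same as the paper's: the paper's justification is exactly the Fourier computation preceding the statement, which shows that the coefficients solve \eqref{system_for_invcurv} and hence scale as $a_n=b\tilde a_n$, $b_n=b\tilde b_n$; your uniqueness lemma (a bounded continuous solution of $\phi(\theta+\omega)=a\phi(\theta)$ with $\abs{a}>1$ must vanish, by iterating $\phi(\theta)=a^{-n}\phi(\theta+n\omega)$) merely makes explicit why that Fourier solution is the curve $\mu_i$, a point the paper leaves implicit. Note that Assumption \ref{assumptions_a} gives $a<-1$ for \eqref{dyn_sys_4}, so you are right to phrase the contraction in terms of $\abs{a}>1$ rather than $a>1$.

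The genuine gap is precisely the step you defer, the constant bookkeeping for \eqref{dyn_sys_2}. Carrying out your own scheme, the constant part of $\mu_2$ must satisfy $C=aC-a\delta+bg_0$, where $g_0$ is the mean of $g$, hence $C=\frac{a\delta-bg_0}{a-1}$; a sanity check at $b=0$ confirms this, since the fixed point of $h_2$ in the region $x\geq\delta$ is $\frac{a\delta}{a-1}$. This does not coincide with the constant $-\frac{\delta}{a-1}-b\frac{g_1(0)}{a-1}$ in the statement (at $b=0$ that formula would give $\mu_2=-\frac{\delta}{a-1}<\delta$, contradicting the hypothesis $\mu_2\geq\delta$), and no $b$-independent function multiplied by $b$ can absorb the discrepancy. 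So the assertion that matching constants ``yields the stated expression'' would fail as written: you must either display the computation and obtain $\mu_2=b\boldsymbol\mu_2+\frac{a\delta}{a-1}-b\frac{g_0}{a-1}$ (and analogously $\hat\mu_2=b\boldsymbol{\hat\mu}_2-\frac{a\delta}{a-1}-b\frac{g_0}{a-1}$), or flag that the $\delta$-constant in the statement appears to be misprinted and reconcile your formula with it. With that point settled, the superposition-plus-uniqueness argument goes through for both branches of \eqref{dyn_sys_2} exactly as for the other three systems.
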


In other words, this result shows that the curve $\mu_i$ can be decomposed as the multiplication of $b$ with another function $\boldsymbol\mu_i$ independent of $b$ (plus a constant in system \eqref{dyn_sys_2}). Moreover, for the system \eqref{dyn_sys_4}, the coefficients $\tilde{a}_n$ and $\tilde{b}_n$ are
\begin{align*}
    \tilde{a}_n = \frac{
    \begin{vmatrix}
            -g_1(n) & -\sin(n\omega)\\
            -g_2(n) & \cos(n\omega)-a
        \end{vmatrix}
    }{1-2a\cos(n\omega)+a^2}
    \text{ and }
    \tilde{b}_n = \frac{
    \begin{vmatrix}
        \cos(n\omega)-a & -g_1(n)\\
        \sin(n\omega) & -g_2(n)
    \end{vmatrix}
    }{1-2a\cos(n\omega)+a^2}.
\end{align*}
Note that we extend the argument to any $g\in L^2(\T)$. Then, the relation in \eqref{system_for_invcurv} shows the relation between the Fourier coefficients of $g$ and $\mu_4$. Now, as mentioned in the introduction, we assume $g\in C^{1+\tau}$ for some $\tau>0$.
\begin{thm}\label{regularity_mu}
    Let $a$ satisfy the condition in \eqref{assumptions_a}. For every system $F_i$ the repelling invariant curve $\mu_i$ is $C^0$.
\end{thm}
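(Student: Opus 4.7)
The functional equation defining each $\mu_i$ has, up to the substitution $\theta\mapsto\theta-\omega$, the common form
\[
\mu_i(\theta+\omega) \;=\; a\,\mu_i(\theta) + R_i(\theta),
\]
with $R_i\in C^0(\T)$ built from $b\,g$ and, in the case of \eqref{dyn_sys_2}, from the extra constant $\pm a\delta$. Viewed as a forward recursion this relation is expansive by Assumption \ref{assumptions_a}, since $|a|>1$; this is precisely what later makes $\mu_i$ repelling for the two-dimensional map $F_i$. My plan is to turn it into a contractive problem by reading it backwards and applying Banach's fixed point theorem on $C^0(\T)$.

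Concretely, isolate $\mu_i(\theta)$ to obtain the equivalent formulation
\[
\mu_i(\theta) \;=\; \frac{1}{a}\,\mu_i(\theta+\omega) \;-\; \frac{1}{a}\,R_i(\theta),
\]
and define $T_i\colon C^0(\T)\to C^0(\T)$ by
\[
T_i\varphi(\theta) \;=\; \frac{1}{a}\,\varphi(\theta+\omega) \;-\; \frac{1}{a}\,R_i(\theta).
\]
Because $R_i$ is continuous (as $g\in C^{1+\tau}(\T)\subset C^0(\T)$) and the rotation $\theta\mapsto\theta+\omega$ is a homeomorphism of $\T$, $T_i$ really does map $C^0(\T)$ into itself. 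For any $\varphi_1,\varphi_2\in C^0(\T)$,
\[
\|T_i\varphi_1 - T_i\varphi_2\|_\infty \;=\; \frac{1}{|a|}\,\|\varphi_1-\varphi_2\|_\infty,
\]
so $T_i$ is a strict contraction with ratio $1/|a|<1$. Since $(C^0(\T),\|\cdot\|_\infty)$ is complete, Banach's theorem produces a unique fixed point $\mu_i\in C^0(\T)$, which is exactly the continuous solution of the original functional equation: continuity is built into the space in which the fixed point is obtained.

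As a by-product, iterating from $\varphi_0\equiv 0$ yields the explicit representation
\[
\mu_i(\theta) \;=\; -\sum_{n=0}^{\infty} a^{-n-1}\,R_i(\theta+n\omega),
\]
with uniform convergence granted by the Weierstrass M-test from $\|R_i\|_\infty<\infty$ and $\sum_n |a|^{-n-1}<\infty$. This matches the Fourier-coefficient calculation carried out just before the theorem and makes the continuous dependence on $(a,b,\omega,\theta)$ transparent, which is useful for Proposition \ref{explicit_factor_mu}. There is no genuine obstacle in the argument; the only conceptual point is the change of viewpoint from forward to backward iteration, which turns the expansive factor $|a|>1$ (responsible for $\mu_i$ being repelling) into the contractive factor $1/|a|<1$ needed for existence and continuity.
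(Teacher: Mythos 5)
Your argument is correct, but it follows a genuinely different route from the paper. The paper proves Theorem \ref{regularity_mu} through Fourier analysis: it bounds the solved-for coefficients $\tilde a_n,\tilde b_n$ using the determinant bound $1-2a\cos(n\omega)+a^2>(a-1)^2$ and the decay $O(1/\abs{n}^{1+\tau})$ of the Fourier coefficients of $g\in C^{1+\tau}$, concluding absolute and uniform convergence of the series defining $\mu_i$. You instead invert the expansive functional equation and apply Banach's fixed point theorem to $T_i\varphi(\theta)=\frac1a\varphi(\theta+\omega)-\frac1a R_i(\theta)$ on $(C^0(\T),\norm{\cdot}_\infty)$, which is a strict contraction of ratio $1/\abs{a}<1$ under Assumption \ref{assumptions_a}; this yields existence, continuity, and uniqueness of the continuous solution in one stroke, and your geometric series representation identifies it with the curve whose coefficients solve \eqref{system_for_invcurv} (any integrable solution has those coefficients, so the objects coincide). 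Your approach is more elementary and needs only $g\in C^0$, not $g\in C^{1+\tau}$; moreover, since $T_i$ is also a contraction in $C^k$ (and in spaces of analytic functions), it transfers the full regularity of $g$ to $\mu_i$ with no loss, which is actually sharper than the paper's Remark \ref{remarks_curves}\ref{regularity_remark} ($C^{\alpha+\tau}$ forcing only $C^{\alpha-1}$). What the paper's Fourier route buys in exchange is the explicit coefficient formulas and the factorization $\mu_i=b\boldsymbol\mu_i$ (plus constants) that are used later in Proposition \ref{explicit_factor_mu} and in the determination of $b^*(a)$, so it is not redundant; but as a proof of continuity alone, your contraction argument is complete and self-contained.
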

\begin{proof}
    Consider the case for the system \eqref{dyn_sys_4}. Denote by $C(a,\omega,n)=1/(1-2a\cos(n\omega)+a^2)$, $c_1(a,\omega,n)=\cos(n\omega)-a$ and $c_2(a,\omega,n)=\sin(n\omega)$. Since $\abs{a}>1$, then 
    \begin{align*}
        \abs{C(a,\omega,n)}<\frac{1}{(a -1)^2}.
    \end{align*}
    Since $g\in C^{1+\tau}$ with $\tau>0$, then the Fourier coefficients of $\mu$ satisfy that
    \begin{align*}
        \abs{\tilde{a}_n}&\leq \abs{C(a,\omega,n)}(\abs{c_1(a,\omega,n)} \abs{g_1(n)} + \abs{c_2(a,\omega,n)} \abs{g_2(n)}) = O\left(1/\abs{n}^{1+\tau}\right),\\
        \abs{\tilde{b}_n}&\leq \abs{C(a,\omega,n)}(\abs{c_2(a,\omega,n)} \abs{g_1(n)} + \abs{c_1(a,\omega,n)} \abs{g_2(n)} = O\left(1/\abs{n}^{1+\tau}\right).
    \end{align*}
    Hence, the Fourier series representing $\mu_4$ converges absolutely and uniformly. We can do similarly with the other systems.
    For more details about the decay of Fourier coefficients see \cite{katz2004ha,Stein2003Four}.
\end{proof}
\begin{remark}\label{remarks_curves}
    \begin{enumerate}[label=(\roman*)]
        \item\label{regularity_remark} Note that using the same Fourier estimates in Theorem \ref{regularity_mu} we obtain:
        \begin{itemize}
            \item If $g$ is $C^{\alpha+\tau}$ for $\alpha\in\N$ such that $\alpha>0$, then $\mu_i$ are $C^{\alpha-1}$.
            \item If $g$ is analytic, then $\mu_i$ are analytic.
        \end{itemize}
        \item For the systems \eqref{dyn_sys} and \eqref{dyn_sys_2}, the distance between images of the curves in the same fibre is positive. For the system \eqref{dyn_sys_4}, we are able to obtain an inequality.
        \begin{itemize}
            \item For the system \eqref{dyn_sys} the distance between images of the same fibre $\theta\in\T$ is
            \begin{align*}
                \tilde\varphi_1(\theta)-\tilde\gamma_1(\theta)=\pi.
            \end{align*}
            \item For the system \eqref{dyn_sys_2} we need to work a bit. If we look at the functional equation of $\mu_2$ and $\hat\mu$, and we compute the relation of the Fourier coefficients, we can see that the coefficients of $\mu_2$ and $\hat\mu$ only differ on $n=0$. Therefore, with these computations we can see that the distance between images of the same fibre $\theta\in\T$ is
            \begin{align*}
                \mu_2(\theta)-\hat\mu_2(\theta)=\frac{2\delta a}{(a-1)}.
            \end{align*}
            \item For the system \eqref{dyn_sys_4} we can obtain the following inequality,
            \begin{align*}
                \mathcal{F}_4(\tilde\varphi_4)(\theta)\leq \tilde\varphi_4(\theta).
            \end{align*}
        \end{itemize}
    \end{enumerate}
    
\end{remark}
The following Proposition shows that, if $g$ is $\pi$-antisymmetric or positive, then we have additional properties for the system \eqref{dyn_sys}.

\begin{prop}\label{g_cases}
    Let $a>1$ and $b>0$. For the system \eqref{dyn_sys} we have the following:
    \begin{enumerate}
        \item If $g(\theta+\pi)=-g(\theta)$ for all $\theta\in\T$, then $\tilde\varphi_1$ and $\tilde\gamma_1$ approach $\mu_1$ at the same time as $b$ grows.
        \item If $g(\theta)\geq0$ for all $\theta\in\T$, then $\tilde\gamma_1$ and $\mu_1$ never intersect and $\tilde\varphi_1$ approach $\mu_1$ as $b$ grows.
    \end{enumerate}
\end{prop}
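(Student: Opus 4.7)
The plan is to combine Lemma~\ref{lemma_distance}, which reduces the minimum gaps to the extrema of $\mu_1$, with two representations of the invariant curve: its Fourier expansion from the linear system analogous to~\eqref{system_for_invcurv}, and the backward iteration of its functional equation. The trivial case $g\equiv 0$ is set aside.

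For part (1), I would first show that $\mu_1$ inherits the $\pi$-antisymmetry of $g$. The identities $\sin(n(\theta+\pi))=(-1)^n\sin(n\theta)$ and $\cos(n(\theta+\pi))=(-1)^n\cos(n\theta)$ translate $g(\theta+\pi)=-g(\theta)$ into the vanishing of the Fourier coefficients $g_1(n)$ and $g_2(n)$ for every even $n$. The analogue of the linear system~\eqref{system_for_invcurv} for system~\eqref{dyn_sys} then forces the Fourier coefficients $\tilde{a}_n,\tilde{b}_n$ of $\mu_1$ to vanish for even $n$, so $\mu_1(\theta+\pi)=-\mu_1(\theta)$ and hence $\max_\theta\mu_1=-\min_\theta\mu_1$. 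Substituting into Lemma~\ref{lemma_distance} yields
\[
    \min_\theta(\tilde\varphi_1(\theta)-\mu_1(\theta)) \;=\; \tfrac{\pi}{2}-a\max_\theta\mu_1 \;=\; \tfrac{\pi}{2}+a\min_\theta\mu_1 \;=\; \min_\theta(\mu_1(\theta)-\tilde\gamma_1(\theta)),
\]
so the two gaps coincide for every $b>0$, and therefore vanish simultaneously.

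For part (2), iterating $\mu_1(\theta)=\tfrac{1}{a}\mu_1(\theta+\omega)+\tfrac{b}{a}g(\theta)$ and using $a>1$ together with the boundedness of $\mu_1$ (Theorem~\ref{regularity_mu}) to kill the remainder $a^{-N}\mu_1(\theta+N\omega)$ produces the telescoping representation
\[
    \mu_1(\theta) \;=\; \sum_{n=0}^{\infty}\frac{b}{a^{n+1}}\,g(\theta+n\omega).
\]
Every summand is nonnegative, so $\mu_1\geq 0$, and consequently $\tilde\gamma_1(\theta)\leq -\tfrac{\pi}{2}<0\leq\mu_1(\theta)$ for every $\theta$; i.e.\ the curves $\tilde\gamma_1$ and $\mu_1$ never intersect. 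Writing $\mu_1=b\,\boldsymbol\mu_1$ with $\boldsymbol\mu_1\geq 0$ not identically zero, as given by Proposition~\ref{explicit_factor_mu}, Lemma~\ref{lemma_distance} gives $\min_\theta(\tilde\varphi_1(\theta)-\mu_1(\theta))=\tfrac{\pi}{2}-ab\max_\theta\boldsymbol\mu_1$, which is strictly decreasing in $b$, so $\tilde\varphi_1$ approaches $\mu_1$ as $b$ grows.

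The only delicate step is the transfer of $\pi$-antisymmetry from $g$ to $\mu_1$. The Fourier route above is the shortest, but a coordinate-free alternative would be to observe that $\theta\mapsto -\mu_1(\theta+\pi)$ is again a continuous solution of the invariance equation when $g$ is $\pi$-antisymmetric, and then invoke uniqueness of the continuous solution in the linear regime. Everything else is a direct substitution into Lemma~\ref{lemma_distance}.
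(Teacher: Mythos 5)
Your proof is correct. Part (1) is essentially the paper's own argument: the paper likewise passes the $\pi$-antisymmetry of $g$ to $\mu_1$ through the Fourier coefficients (even modes of $g$ vanish, hence even modes of $\mu_1$ vanish, and the constant term dies because $(1-a)b_0=0$ with $a>1$) and then concludes via Lemma~\ref{lemma_distance}; your coordinate-free alternative, noting that $\theta\mapsto-\mu_1(\theta+\pi)$ solves the same linear functional equation and invoking uniqueness of the bounded continuous solution, is also valid and arguably tidier. Part (2) is where you genuinely diverge: the paper argues dynamically that any curve $Z$ with $-\frac{\pi}{2}<Z<0$ satisfies $\mathcal{F}_1(Z)<Z$, that iterates of the half-plane $\{x<0\}$ converge to $\tilde\gamma_1$, and hence that $\mu_1$ must lie in $\{x\geq0\}$; you instead iterate the functional equation backwards to get the explicit absolutely convergent series $\mu_1(\theta)=b\sum_{n\geq0}a^{-(n+1)}g(\theta+n\omega)$, from which $\mu_1\geq0$ is immediate. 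Your route buys several things at once: it is self-contained and quantitative, it delivers the factorization $\mu_1=b\,\boldsymbol\mu_1$ with $\boldsymbol\mu_1\geq0$ directly (so you do not need to worry that Proposition~\ref{explicit_factor_mu} is stated under the invariance condition $\abs{\mu_1}\leq\frac{\pi}{2a}$ — the linear dependence on $b$ is manifest from the series), and it sidesteps the attraction-to-$\tilde\gamma_1$ claim, which the paper only sketches and which implicitly presupposes that the functional-equation solution behaves like an invariant curve of the full nonlinear map. The paper's argument is shorter given its surrounding machinery, but yours closes that small gap; the final monotonicity of the gap $\frac{\pi}{2}-ab\max_\theta\boldsymbol\mu_1$ in $b$ coincides in both treatments.
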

\begin{proof}
    \begin{enumerate}
        \item In this case $-\tilde\varphi_1(\theta+\pi) = -\frac{\pi}{2} + b g(\theta+\pi) = -\frac{\pi}{2} - b g(\theta) = \tilde\gamma_1(\theta)$ and also that $b_0$ the independent term should be zero. The last is because the system for $b_0$ results in $(1-a)b_0 = -b g_2(0) = 0$ and we have by hypothesis that $a>1$. Indeed, using the trigonometric identity for the sum we can compute that the Fourier coefficients of $g$ verify that
        \begin{align*}
            g_1(n)\sin(n(\theta+\pi))+g_2(n)\cos(n(\theta+\pi)) &= -\left( g_1(n)\sin(n\theta)+g_2(n)\cos(n\theta)\right),\\
            g_1(n)(-1)^n\sin(n\theta)+g_2(n)(-1)^n\cos(n\theta) &= -\left( g_1(n)\sin(n\theta)+g_2(n)\cos(n\theta)\right).
        \end{align*}
        So for $n$ even we have $g_1(n)=0=g_2(n)$. Therefore, considering the system for $(a_n,b_n)$ we get that for $n$ even $(a_n,b_n)=(0,0)$.
        Hence, again using the trigonometric identity for the sum we have that
        \begin{align*}
            \mu_1(\theta+\pi)&=\sum_{k=1}^\infty a_{2k+1}\sin((2k+1)(\theta+\pi))+b_{2k+1}\cos((2k+1)(\theta+\pi))\\
            &= -\sum_{k=1}^\infty a_{2k+1}\sin((2k+1)\theta)+b_{2k+1}\cos((2k+1)\theta) = -\mu_1(\theta).
        \end{align*}
        Therefore, the statement is a consequence of Lemma \eqref{lemma_distance},
            \begin{align}\label{symmetriccase_1_2}
                \min_{\theta\in\T}\left(\tilde\varphi_1(\theta)-\mu_1(\theta)\right)
                 = \min_{\theta\in\T}\left(\mu_1(\theta+\pi)-\tilde\gamma_1(\theta+\pi)\right).
            \end{align}

        \item Since $g(\theta)\geq0$ for all $\theta\in\T$, then we have that the map applied to the a curve $Z\colon\T\to\R$ such that $-\frac{\pi}{2}<Z(\theta)<0$ has image
            \begin{align*}
                \mathcal{F}(Z)(\theta) &= h_{a}(Z(\theta-\omega)) - b g(\theta-\omega) = a Z(\theta-\omega) - b g(\theta-\omega)\\
                &< Z(\theta-\omega) - b g(\theta-\omega) \leq Z(\theta-\omega).
            \end{align*}
        Since all the iterates of $\{x<0\}$ converge to $\tilde\gamma_1$ we obtain that the invariant curve $\mu_1$ is on $\{x\geq0\}$ for any $b>0$. Hence, by the expression of the invariant curve $\mu_1$ we get that $\tilde\varphi_1$ approach $\mu_1$ as $b$ grows.
    \end{enumerate}
\end{proof}

As a consequence of Proposition \eqref{explicit_factor_mu} and Lemma \eqref{lemma_distance}, we obtain that there exists a unique value of the parameter $b$, denoted by $b^*(a)$, for which there exists a collision between some of the invariant curves.

\begin{corollary}
    Let $a$ satisfy the condition in \eqref{assumptions_a} and $b\leq b^*(a)$. For each system there exists a unique value $b=b^*(a)$ for which intersection of the graphs of the invariant curves is not empty. 
    \begin{itemize}
        \item For the system \eqref{dyn_sys}, if we denote by $\theta_M$ a global maximum and $\theta_m$ a global minimum of $\boldsymbol\mu_1$, then $b^*(a)$ verifies that
        \begin{align*}
            b^*(a)\max\{\abs{\boldsymbol\mu_1(\theta_m)},\abs{\boldsymbol\mu_1(\theta_M)}\} = \frac{\pi}{2a}.
        \end{align*}
        Moreover, for $b = b^*(a)$ if $\max\{\abs{\boldsymbol\mu_1(\theta_m)},\abs{\boldsymbol\mu_1(\theta_M)}\}=\abs{\boldsymbol\mu_1(\theta_M)}$. Then, the curves $\tilde\varphi_1$ and $\mu_1$ intersect. If $\max\{\abs{\boldsymbol\mu_1(\theta_m)},\abs{\boldsymbol\mu_1(\theta_M)}\}=\abs{\boldsymbol\mu_1(\theta_m)}$, then the curves $\tilde\gamma_1$ and $\mu_1$ intersect.
        \item For the system \eqref{dyn_sys_2}, if we denote by $\theta_m$ a global minimum of $\boldsymbol\mu_2$ and $\theta_M$ a global maximum of $\boldsymbol{\hat\mu}_2$, then $b^*(a)$ verifies that
        \begin{align*}
            b^*(a)\max\left(\abs*{\boldsymbol\mu_2(\theta_m) - \frac{g_1(0)}{a-1}}, \abs*{\boldsymbol{\hat\mu}_2(\theta_m) - \frac{g_1(0)}{a-1}} \right)= \frac{a\delta}{a-1}
        \end{align*}
        \item For the system \eqref{dyn_sys_3}, if we denote by $\theta_m$ a global minimum of $\boldsymbol\mu_3$ then $b^*(a)$ verifies that
        \begin{align*}
            b^*(a)\boldsymbol\mu_3(\theta_m) = -\frac{1}{a}.    
        \end{align*}
        \item For the system \eqref{dyn_sys_4}, if we denote by $\theta_m$ a global minimum of $\boldsymbol\mu_4$, then $b^*(a)$ verifies that
        \begin{align*}
            b^*(a)\boldsymbol\mu_4(\theta_m) = \frac{1}{a}.    
        \end{align*}
    \end{itemize}
\end{corollary}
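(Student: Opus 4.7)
The proof should be a direct computation that combines the two previous results: Lemma \ref{lemma_distance} expresses the minimum gap between the colliding curves as $a$ times a minimum involving $\mu_i$, while Proposition \ref{explicit_factor_mu} writes $\mu_i$ as a $b$-affine perturbation of the $b$-independent profile $\boldsymbol\mu_i$. Substituting the decomposition into the distance formula turns the minimum gap into an affine function of $b$, whose unique zero is the asserted $b^*(a)$. Uniqueness of $b^*(a)$ is then automatic from the strict monotonicity of that affine expression, and the fact that at $b=0$ the curves involved are separated by a strictly positive distance.

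I plan to carry this out one system at a time, beginning with \eqref{dyn_sys_4}, which has no $b$-independent shift. From the identity $\tilde\varphi_4(\theta)-\mu_4(\theta) = a(\tfrac{1}{a}-\mu_4(\theta-\omega))$ used in the proof of Lemma \ref{lemma_distance}, together with $\mu_4 = b\boldsymbol\mu_4$, I obtain
\begin{equation*}
\min_{\theta\in\T}(\tilde\varphi_4(\theta)-\mu_4(\theta)) \;=\; 1 - a\min_{\theta\in\T}\mu_4(\theta) \;=\; 1 - a b\,\boldsymbol\mu_4(\theta_m),
\end{equation*}
where the sign convention uses $a<-1$ (so pulling $a$ out of a minimum converts it into a maximum; this is the reason the global minimum $\theta_m$ and not a maximum appears in the formula). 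Setting the left-hand side to zero yields $b^*(a)\boldsymbol\mu_4(\theta_m)=\tfrac{1}{a}$, and since the slope in $b$ is $-a\boldsymbol\mu_4(\theta_m)\neq 0$, the root is unique. The same argument gives \eqref{dyn_sys_3} with $a>1$ (no sign flip): one gets $\min(\mu_3-\tilde\varphi_3)=1+ab\,\boldsymbol\mu_3(\theta_m)$, whose zero is the stated $b^*(a)\boldsymbol\mu_3(\theta_m)=-\tfrac{1}{a}$.

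The two remaining systems \eqref{dyn_sys} and \eqref{dyn_sys_2} require a case split because Lemma \ref{lemma_distance} provides two independent distance identities (one for each potentially colliding pair). For \eqref{dyn_sys} I apply $\mu_1=b\boldsymbol\mu_1$ to both identities and obtain the two affine expressions $\tfrac{\pi}{2}-ab\,\boldsymbol\mu_1(\theta_M)$ and $\tfrac{\pi}{2}+ab\,\boldsymbol\mu_1(\theta_m)$. Their respective positive roots are $\tfrac{\pi}{2a\boldsymbol\mu_1(\theta_M)}$ and $-\tfrac{\pi}{2a\boldsymbol\mu_1(\theta_m)}$; the true bifurcation value is the smaller one, which is precisely $\tfrac{\pi}{2a}$ divided by $\max\{|\boldsymbol\mu_1(\theta_m)|,|\boldsymbol\mu_1(\theta_M)|\}$, and a comparison of the two roots determines which pair of curves actually collides at $b^*(a)$. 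For \eqref{dyn_sys_2} the same recipe applies after absorbing the $b$-independent shift $\mp\tfrac{\delta}{a-1}$ and the $b$-linear shift $-\tfrac{b g_1(0)}{a-1}$ coming from Proposition \ref{explicit_factor_mu}; the two resulting affine equations produce the two candidates $b\,(\boldsymbol\mu_2(\theta_m)-\tfrac{g_1(0)}{a-1})=\tfrac{a\delta}{a-1}$ and its analogue for $\boldsymbol{\hat\mu}_2(\theta_M)$, and once again the smaller positive root is recorded by the $\max$ of absolute values on the left-hand side.

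The main delicate point I expect is the sign bookkeeping for system \eqref{dyn_sys_4}: because $a<-1$, pulling $a$ outside of $\min_\theta$ reverses min and max, so the formula in Lemma \ref{lemma_distance} has to be re-expressed via $\min_\theta(\tilde\varphi_4-\mu_4)=a\max_\theta(\tfrac{1}{a}-\mu_4(\theta-\omega))=1-a\min_\theta\mu_4$ before the substitution $\mu_4=b\boldsymbol\mu_4$ yields the $\theta_m$ appearing in the statement. The secondary bookkeeping task is, for \eqref{dyn_sys} and \eqref{dyn_sys_2}, to verify that whichever of the two candidate $b$-values is smaller genuinely corresponds to a collision (i.e.\ that the \emph{other} pair of curves is still well separated at that $b$); this follows from the monotonicity of both distance functions in $b$, since the non-selected affine expression remains strictly positive until its own larger root, which lies beyond $b^*(a)$.
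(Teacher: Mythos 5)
Your proposal is correct and follows essentially the same route as the paper: combine the pointwise distance identity of Lemma \ref{lemma_distance} with the decomposition of Proposition \ref{explicit_factor_mu} to make the minimal gap affine in $b$, whose unique positive zero is $b^*(a)$. Your explicit sign bookkeeping for $a<-1$ (min/max exchange) and the two-candidate comparison for systems \eqref{dyn_sys} and \eqref{dyn_sys_2} are exactly the details the paper leaves implicit, and they check out.
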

\begin{proof}
    Consider the system \eqref{dyn_sys_4}. From Lemma \eqref{lemma_distance}, we obtain that
    \begin{align*}
            \min_{\theta\in\T}\left(\tilde\varphi_4(\theta)-\mu_4(\theta)\right)&=a\min_{\theta\in\T}\left(\frac{1}{a}-\mu_4(\theta-\omega)\right).
        \end{align*}
    Using the decomposition in Proposition \ref{explicit_factor_mu} we obtain the expression for $b^*(a)$. The same argument proves the expressions for the other systems.
\end{proof}
\begin{remark}
    Note that Lemma \ref{lemma_distance} provides a sufficient condition to have a non-empty intersection between the curves. In particular, when we have the symmetry condition, $g(\theta+\pi)=-g(\theta)$ for all $\theta\in\T$, we have non-empty intersection between three curves.
\end{remark}
Now, using the maps $\mathcal{F}_i$ and the curves $\tilde\varphi_i$, we introduce an iteration procedure which let us obtain an invariant or two-periodic curve of the system.
\begin{lemma}\label{montonicity_lemma}
    Let $a$ satisfy Assumption \eqref{assumptions_a}. For each system we define two sequences of monotone curves. 
    \begin{itemize}
        \item For the system \eqref{dyn_sys}, we define the following functions $\varphi_0=\tilde\varphi_1$ and $\lambda_0 = \varphi_0-\mu_1$, and for all $n\geq0$ we set
        \begin{align*}
            \varphi_{n+1}(\theta) &:= \mathcal{F}_1(\varphi_n)(\theta) = h_1(\varphi_n(\theta-\omega)) -b g(\theta-\omega),\\
            \lambda_{n+1}(\theta) &:= h_1(\lambda_n(\theta-\omega) + \mu_1(\theta-\omega)) - a\mu_1(\theta-\omega).
        \end{align*}
        Then, we have that
        \begin{enumerate}
            \item $\lambda_n = \varphi_n-\mu_1$ for all $n\geq0$.
            \item If $0<b\leq b^*$, then $\lambda_n\geq 0$ for all $n\geq0$.
            \item The sequence of functions $\{\varphi_n\}_n$ and $\{\lambda_n\}_n$ are decreasing.
        \end{enumerate}
        \item For the system \eqref{dyn_sys_2}, we define the following functions $\varphi_0=\tilde\varphi_2$ and $\lambda_0 = \varphi_0-\mu_2$, and for all $n\geq0$ we set
        \begin{align*}
            \varphi_{n+1}(\theta) &:= \mathcal{F}_2(\varphi_n)(\theta) = h_{2}(\varphi_n(\theta-\omega)) + b g(\theta-\omega),\\
            \lambda_{n+1}(\theta) &:= a(\mu_2(\theta-\omega)-\delta) - h_{2}(\mu_2(\theta-\omega)-\lambda_n(\theta-\omega)).
        \end{align*}
        Then, we have that
        \begin{enumerate}
            \item $\lambda_n = \mu_2-\varphi_n$ for all $n\geq0$.
            \item If $0\leq b\leq b^*(a)$, then $\lambda_n\geq 0$ for all $n\geq0$.
            \item The sequence of functions $\{\varphi_n\}_n$ is monotone increasing and $\{\lambda_n\}_n$ is monotone decreasing.
        \end{enumerate}
        \item For the system \eqref{dyn_sys_3}, we define the following functions $\varphi_0=\tilde\varphi$ and $\lambda_0 = \mu_3-\varphi_0$, and for all $n\geq0$ we set
        \begin{align*}
            \varphi_{n+1}(\theta) &:= \mathcal{F}_3(\varphi_n)(\theta) = h_{3}(\varphi_n(\theta-\omega)) + b g(\theta-\omega),\\
            \lambda_{n+1}(\theta) &:= a\mu_3(\theta-\omega) - h_{3}(\mu_3(\theta-\omega)-\lambda_n(\theta-\omega)).
        \end{align*}
        Then, we have that
        \begin{enumerate}
            \item $\lambda_n = \mu_3-\varphi_n$ for all $n\geq0$.
            \item If $0\leq b\leq b^*(a)$, then $\lambda_n\geq 0$ for all $n\geq0$.
            \item The sequence of functions $\{\varphi_n\}_n$ is monotone increasing and $\{\lambda_n\}_n$ is monotone decreasing.
        \end{enumerate}
        \item For the system \eqref{dyn_sys_4}, we define the following functions $\varphi_0=\tilde\varphi$ and $\lambda_0 = \varphi_0-\mu_4$, and for all $n\geq0$ we set
        \begin{align*}
            \varphi_{n+1}(\theta) &:= \mathcal{F}_4^2(\varphi_n)(\theta) = h_4(h_{4}(\varphi_n(\theta-2\omega)) - b g(\theta-2\omega)) - b g(\theta-\omega),\\
            \lambda_{n+1}(\theta) &:= h_4(h_{4}(\mu_4(\theta-2\omega)+\lambda_n(\theta-2\omega)) - b g(\theta-2\omega)) - a\mu_4(\theta-\omega).
        \end{align*}
        Then, we have that
        \begin{enumerate}
            \item $\lambda_n = \varphi_n-\mu_4$ for all $n\geq0$.
            \item If $0\leq b\leq b^*(a)$, then $\lambda_n\geq 0$ for all $n\geq0$.
            \item The sequence of functions $\{\varphi_n\}_n$ is monotone decreasing and $\{\lambda_n\}_n$ is monotone decreasing.
        \end{enumerate}
    \end{itemize}
\end{lemma}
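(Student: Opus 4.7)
The plan is a three-part induction on $n$, spelled out for system \eqref{dyn_sys_4} and then transferred to the other three systems. The maps $h_1, h_2, h_3$ are non-decreasing and the corresponding operators $\mathcal{F}_i$ are order-preserving, so one works with a single iterate; in contrast $h_4$ is non-increasing, which is why the authors take the second iterate $\mathcal{F}_4^2$, an order-preserving map. The main tools are the functional equation \eqref{mu_fun} defining $\mu_4$ (valid because $b\leq b^*(a)$ keeps $\mu_4$ on the linear branch of $h_4$ everywhere), Lemma \ref{lemma_distance}, the characterization of $b^*(a)$ from the preceding Corollary, and the relevant one-sided bound on $h_i$.

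For part (1), the identity $\lambda_n = \varphi_n - \mu_4$ holds at $n=0$ by definition. In the inductive step I substitute $\mu_4(\theta-2\omega) + \lambda_n(\theta-2\omega) = \varphi_n(\theta-2\omega)$ into the recursion for $\lambda_{n+1}$ and then apply \eqref{mu_fun} to rewrite $a\mu_4(\theta-\omega)$ as $\mu_4(\theta) + bg(\theta-\omega)$; the two resulting expressions collapse into $\lambda_{n+1} = \varphi_{n+1}-\mu_4$. For part (2), the base case $\lambda_0 = \tilde\varphi_4-\mu_4 \geq 0$ follows by combining Lemma \ref{lemma_distance} with the characterization $b\leq b^*(a) \Leftrightarrow \min\mu_4 \geq 1/a$, after a short sign computation (the inequalities flip when multiplied by $a<0$). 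The inductive step exploits that $h_4$ is non-increasing: from $\varphi_n \geq \mu_4$ one gets $h_4(\varphi_n(\theta-2\omega)) \leq a\mu_4(\theta-2\omega)$, then $h_4(\varphi_n(\theta-2\omega))-bg(\theta-2\omega) \leq \mu_4(\theta-\omega)$ via \eqref{mu_fun}, and a second application of $h_4$ flips the inequality back to $h_4\bigl(h_4(\varphi_n(\theta-2\omega))-bg(\theta-2\omega)\bigr) \geq a\mu_4(\theta-\omega)$, which is exactly $\lambda_{n+1} \geq 0$.

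For part (3), the base inequality $\varphi_1 \leq \varphi_0 = \tilde\varphi_4$ rests on the elementary observation that $h_4(x)\leq 1$ for every $x\in\R$: writing out $\varphi_1(\theta) = h_4\bigl(h_4(\tilde\varphi_4(\theta-2\omega)) - bg(\theta-2\omega)\bigr) - bg(\theta-\omega)$, the outer $h_4$ is bounded above by $1$, giving $\varphi_1(\theta) \leq 1 - bg(\theta-\omega) = \varphi_0(\theta)$. Iteration then propagates the inequality because $\mathcal{F}_4^2$ preserves order: from $\varphi_n \geq \varphi_{n+1}$ one obtains $\varphi_{n+1} = \mathcal{F}_4^2(\varphi_n) \geq \mathcal{F}_4^2(\varphi_{n+1}) = \varphi_{n+2}$. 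Monotonicity of $\{\lambda_n\}$ is inherited from $\{\varphi_n\}$ via part (1). For the other three systems the template is identical, using the one-sided bounds $h_1(x)\leq\pi/2$, $h_3(x)\geq -1$, and for system \eqref{dyn_sys_2} the fact that $h_2$ vanishes on the central branch $[-\delta,\delta]$.

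The main obstacle I expect is the bookkeeping in part (2) for system \eqref{dyn_sys_4}: each intermediate value inside the double composition of $h_4$ must be checked against the branch structure of $h_4$ in order to justify the substitutions provided by \eqref{mu_fun}, and the two sign reversals induced by $h_4$ being non-increasing must be composed correctly. For systems \eqref{dyn_sys}, \eqref{dyn_sys_2}, \eqref{dyn_sys_3} the analogous inductive step is substantially simpler because $\mathcal{F}_i$ is already order-preserving.
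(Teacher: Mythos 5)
Your proposal is correct and follows essentially the same route as the paper: part (1) via the same substitution using \eqref{mu_fun}, part (3) from the bound $h_4\leq 1$ together with order preservation of $\mathcal{F}_4^2$, and part (2) by propagating $\varphi_0\geq\mu_4$ through the order-preserving second iterate that fixes $\mu_4$ --- your branch-by-branch induction is just a more explicit rendering of that argument, using that $b\leq b^*(a)$ keeps $\mu_4$ on the linear branch. The paper likewise writes out only the case of system \eqref{dyn_sys_4} and defers the other three systems to analogous arguments, as you do.
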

\begin{proof}
    We prove the three statements for the system \eqref{dyn_sys_4}. The other ones follow with similar arguments. In the case of the system \eqref{dyn_sys_4}, in order to prove the item (3) it is important to recall that $h_4$ is decreasing and hence $h_4\circ h_4$ is increasing.
    \begin{enumerate}
        \item By definition we have that it is true for $n=0$. So we proceed by induction, we assume that $\lambda_n = \varphi_n-\mu_4$ so
        \begin{align*}
            \lambda_{n+1}(\theta) &= h_4(h_{4}(\mu_4(\theta-2\omega)+\lambda_n(\theta-2\omega)) - b g(\theta-2\omega)) - a\mu_4(\theta-\omega)\\
            &= h_4(h_{4}(\varphi_n(\theta-2\omega)) - b g(\theta-2\omega)) - \mu_4(\theta) - b g(\theta-\omega)\\
            &= \varphi_{n+1}(\theta)-\mu_4(\theta).
        \end{align*}

        \item We know that for $0\leq b\leq b^*(a)$ the curves $\mu_4$ and $\varphi_0$ do not intersect, and that $\{\varphi_n\}_n$ is monotone decreasing and bounded by $\mu_4$. Hence, as $\mu_4$ is invariant $\lambda_n = \varphi_n-\mu_4\geq0$. If $b = b^*(a)$, we have that they intersect only in one point, so by the same argument we have that $\lambda_n\geq0$ for all $n\geq0$.
        
        \item We compute that for all $\theta\in\T$
        \begin{align*}
            \varphi_{1}(\theta) = h_4(h_{4}(\varphi_0(\theta-2\omega)) - b g(\theta-2\omega)) - b g(\theta-\omega) \leq 1 - b g(\theta-\omega) = \varphi_0(\theta)
        \end{align*}
        Assume by induction that $\varphi_{n}\leq\varphi_{n-1}$, since $h_a$ is a decreasing function, we get that for all $\theta\in\T$
        \begin{align*}
            h_{4}(\varphi_n(\theta-2\omega)) - b g(\theta-2\omega)\geq h_{4}(\varphi_{n-1}(\theta-2\omega)) - b g(\theta-2\omega).
        \end{align*}
        Hence, for all $\theta\in\T$
        \begin{align*}
            \varphi_{n+1}(\theta) &= h_4 h_{4}(\varphi_n(\theta-2\omega)) - b g(\theta-2\omega)) - b g(\theta-\omega) \\
            &\leq h_4(h_{4}(\varphi_{n-1}(\theta-2\omega)) - b g(\theta-2\omega)) - b g(\theta-\omega) = \varphi_{n}(\theta).
        \end{align*}
        Since $\lambda_n = \varphi_n-\mu_4$ we obtain that $\{\lambda_n\}_n$ is monotone decreasing.
    \end{enumerate}
\end{proof}

To make more simple the notation, we will denote by $\{\varphi_n\}_n$ all the sequences generated by the map $\mathcal{F}_i$, for $i=1,2,3,4$. This is because the sequence $\varphi_n$ will play the same role for $i=1,2,3,4$. We do the same simplification of notation with the sequence $\{\lambda_n\}_n$. 

Since, for $i=1,2,3,4$, either $h_i$ or $h_i^2$ is monotone increasing, the sequences $\{\varphi_n\}_n$ and $\{\lambda_n\}_n$ converge pointwise to some curve $\varphi_\infty$ and $\lambda_\infty$ respectively. 

We used that a monotone bounded sequence of continuous curves has to converge pointwise to a semicontinuous curve. For every piecewise-linear system, the final goal is to prove the noncontinuity of $\varphi_\infty$, at the parameter $b=b^*(a)$.

The following proposition will allow us to define an invariant compact set.
\begin{prop}\label{attract_prop}
    Let $a$ satisfy Assumption \eqref{assumptions_a} and $b\leq b^*(a)$.
    \begin{itemize}
        \item For the system \eqref{dyn_sys}, we define the set
        \begin{align*}
            A_{+}:=\{(x,\theta)\in\R\times\T\mid\mu_1(\theta)\leq x \leq\varphi_0(\theta)\}.
        \end{align*}
        Then $F_{1}(A_{+})\subset A_{+}$.
        \item For the system \eqref{dyn_sys_2}, we define the set
        \begin{align*}
            A_{+}:=\{(x,\theta)\in\R\times\T\mid\varphi_0(\theta)\leq x \leq\mu_2(\theta)\}.
        \end{align*}
        Then $F_{2}(A_{+})\subset A_{+}$.
        \item For the system \eqref{dyn_sys_3}, we define the set
        \begin{align*}
            A_{+}:=\{(x,\theta)\in\R\times\T\mid\varphi_0(\theta)\leq x \leq\mu_3(\theta)\}.
        \end{align*}
        Then $F_{3}(A_{+})\subset A_{+}$.
        \item For the system \eqref{dyn_sys_4}, we define the set
        \begin{align*}
            A_{+}:=\{(x,\theta)\in\R\times\T\mid\mu_4(\theta)\leq x\leq \varphi_0(\theta)\}.
        \end{align*}
        Then $F_{4}^2(A_{+})\subset A_{+}$.
    \end{itemize}
\end{prop}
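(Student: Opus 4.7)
The proof follows a common two-part strategy for all four systems. First, each of the two bounding curves of $A_+$ is mapped to itself by the relevant iterate: for systems \eqref{dyn_sys}--\eqref{dyn_sys_3}, both curves are genuine invariant curves of $F_i$, while for system \eqref{dyn_sys_4} the lower curve $\mu_4$ is invariant under $F_4$ (hence under $F_4^2$) and the upper curve $\varphi_0 = \tilde\varphi_4$ is two-periodic, so both are fixed by $F_4^2$ (this latter fact was already verified in the paragraphs preceding Lemma \ref{lemma_distance}). Second, the relevant iterate acts monotonically nondecreasingly in the $x$-variable on each fiber, so any fiber segment sandwiched between two fixed curves is carried into a fiber segment of $A_+$ of the next angle.

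I would give the detailed argument for \eqref{dyn_sys_4}. Writing out the two-iteration map,
\begin{align*}
\pi_1 F_4^2(x,\theta) = h_4\bigl(h_4(x) - bg(\theta)\bigr) - bg(\theta+\omega).
\end{align*}
Since $a<-1$, the map $h_4$ is (weakly) monotone decreasing, hence $h_4\circ h_4$ is monotone nondecreasing, and consequently $x\mapsto \pi_1 F_4^2(x,\theta)$ is nondecreasing for each fixed $\theta$. Therefore, for any $(x,\theta)$ with $\mu_4(\theta)\le x\le \varphi_0(\theta)$ we obtain
\begin{align*}
\pi_1 F_4^2(\mu_4(\theta),\theta) \;\le\; \pi_1 F_4^2(x,\theta) \;\le\; \pi_1 F_4^2(\varphi_0(\theta),\theta),
\end{align*}
and the left- and right-hand sides equal $\mu_4(\theta+2\omega)$ and $\varphi_0(\theta+2\omega)$ respectively, by invariance and two-periodicity. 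Since $\pi_2 F_4^2(x,\theta) = \theta+2\omega$, the image point lies in $A_+$, proving $F_4^2(A_+)\subset A_+$. The hypothesis $b\le b^*(a)$ is used only to guarantee $\mu_4\le \varphi_0$, so that the strip $A_+$ is well ordered.

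For systems \eqref{dyn_sys}--\eqref{dyn_sys_3} the reasoning is simpler because each $h_i$ is itself monotone nondecreasing, so $F_i$ is already monotone nondecreasing in $x$ on each fiber. In those cases both boundaries are fixed by a single application of $F_i$: the inner curves $\mu_i$ (and $\hat\mu_2$) by their defining functional equations, and the outer curves $\tilde\varphi_i,\tilde\gamma_i$ because for $b\le b^*(a)$ their values always sit in the flat region of $h_i$, which is precisely what the collision criteria of the preceding corollary encode. The same sandwich argument then yields $F_i(A_+)\subset A_+$. The main (mild) obstacle is purely bookkeeping: keeping straight which curves are fixed under $F_i$ versus $F_i^2$, and checking that the assumption $b\le b^*(a)$ really is the quantitative condition that keeps the outer curves on the flat branches of $h_i$ (for $i=1,2,3$) and $\mu_4$ above $1/a$ (for $i=4$); once this is in place the rest is a one-line monotonicity argument.
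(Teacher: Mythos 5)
Your overall skeleton (fiberwise monotonicity of $h_i$ or $h_4\circ h_4$ plus a sandwich between the two boundary curves) is the same as the paper's, but the justification of the upper bound contains a genuine gap: you claim that for all $b\le b^*(a)$ the outer curve is fixed by the relevant iterate — that $\varphi_0=\tilde\varphi_4$ satisfies $\mathcal{F}_4^2(\varphi_0)=\varphi_0$, and that $\tilde\varphi_1,\tilde\gamma_1,\tilde\varphi_2,\tilde\varphi_3$ are invariant under $F_i$ because ``$b\le b^*(a)$ keeps them on the flat branch.'' This is only established (and only true in general) for sufficiently small $b>0$; that is all the paragraphs preceding Lemma \ref{lemma_distance} prove. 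The threshold $b^*(a)$ is defined by the collision of $\mu_i$ with the outer curve, i.e.\ by $\mu_i$ reaching the kink value of $h_i$; it does not encode the quite different condition that the forward image of the outer curve stays on the flat branch. Indeed, if $\varphi_0$ were fixed by $\mathcal{F}_4^2$ for all $b\le b^*(a)$, the monotone sequence of Lemma \ref{montonicity_lemma} would be constant and $\varphi_\infty=\varphi_0$ would be a continuous curve at $b=b^*(a)$, contradicting Theorem \ref{big_thm} (and the fractalization results): the entire bifurcation analysis rests on $\{\varphi_n\}_n$ being nontrivially decreasing near $b^*(a)$. So the step ``the right-hand side equals $\varphi_0(\theta+2\omega)$ by two-periodicity'' fails for the parameter range you are asked to cover.

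The repair is small and is exactly what the paper does: you do not need equality, only the one-sided inequality $\mathcal{F}_4^2(\varphi_0)\le\varphi_0$ (respectively $\mathcal{F}_1(\varphi_0)\le\varphi_0$, $\mathcal{F}_2(\varphi_0)\ge\varphi_0$, $\mathcal{F}_3(\varphi_0)\ge\varphi_0$), which holds because the flat branch caps $h_i$ (e.g.\ $h_4\le1$ gives $\pi_1\circ F_4^2(\varphi_0(\theta),\theta)=\varphi_1(\theta+2\omega)\le\varphi_0(\theta+2\omega)$); this is item (3) of Lemma \ref{montonicity_lemma}. With that replacement, your sandwich $\mu_4(\theta+2\omega)\le\pi_1\circ F_4^2(x,\theta)\le\varphi_1(\theta+2\omega)\le\varphi_0(\theta+2\omega)$ is precisely the paper's proof, with $b\le b^*(a)$ used, as you say, to keep $\mu_i$ on the linear branch (so that it is genuinely invariant) and to guarantee $\mu_i$ and $\varphi_0$ are correctly ordered.
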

\begin{proof}
    First consider the case for the system \eqref{dyn_sys_4}. Assume $(x,\theta)\in A_{+}$. Since $h_4$ is decreasing we have that
    \begin{align*}
        h_4\circ\mu_4(\theta) &\geq h_4(x) \geq h_4\circ\varphi_0(\theta+\omega).
    \end{align*}
    Hence, we have that
    \begin{align*}
        \mu_4(\theta+2\omega) \leq \pi_1\circ F_{4}^2(x,\theta) = h_4(h_{4}(x) - b g(\theta)) - b g(\theta+\omega) \leq \varphi_1(\theta+2\omega).
    \end{align*}
    Therefore, since $\{\varphi_n\}_n$ is monotone decreasing we have that $F_{4}^2(x,\theta)\in A_{+}$. The case of the other systems is proved using similar arguments. 
\end{proof}
Finally, we obtain a compact invariant set for the piecewise-linear systems. 
\begin{thm}\label{thm_compactinvregion}
    Let $a$ satisfy Assumption \eqref{assumptions_a} and $b\leq b^*(a)$. 
    \begin{itemize}
        \item For the case $i=1,2,3$, the set defined by
        \begin{align*}
            \Lambda_i:= \bigcap_{n\geq0}F^n_{i}(A_{+}),
        \end{align*}
        is a compact invariant set for the map $F_{i}$.
        \item For the case $i=4$, the set
        \begin{align*}
            \tilde{\Lambda}_4:= \bigcap_{n\geq0}F^{2n}_{4}(A_{+})
        \end{align*}
        is a compact invariant set for the map $F^2_{4}$ and the set
        \begin{align*}
            \Lambda_4:= \tilde{\Lambda}_4 \bigcup F_4\left(\tilde{\Lambda}_4\right).
        \end{align*}
        is a compact invariant set for the map $F_{4}$.
    \end{itemize}
\end{thm}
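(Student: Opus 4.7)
The plan is to assemble three ingredients: compactness of $A_+$, continuity of $F_i$, and the containment given by Proposition \ref{attract_prop}. First I would check that $A_+$ is compact in $\R\times\T$. By Theorem \ref{regularity_mu} each $\mu_i$ is continuous on $\T$, and the curves $\varphi_0=\tilde\varphi_i$ are explicitly continuous, so their graphs are compact; the region between two continuous graphs over the compact torus is closed and bounded, hence compact. Since $F_i$ is continuous, every forward iterate $F_i^n(A_+)$ is then compact as a continuous image of a compact set.

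Next, for $i=1,2,3$, Proposition \ref{attract_prop} gives $F_i(A_+)\subset A_+$; inducting and applying $F_i^n$ yields $F_i^{n+1}(A_+)\subset F_i^n(A_+)$, so $\{F_i^n(A_+)\}_{n\geq 0}$ is a decreasing sequence of nonempty compacts and its intersection $\Lambda_i$ is a nonempty compact set. The inclusion $F_i(\Lambda_i)\subset\Lambda_i$ is immediate from
$$F_i(\Lambda_i)\subset\bigcap_{n\geq 0}F_i(F_i^n(A_+))=\bigcap_{n\geq 1}F_i^n(A_+)=\Lambda_i.$$
For the reverse inclusion, given $y\in\Lambda_i$, for each $n\geq 0$ the membership $y\in F_i^{n+1}(A_+)$ lets me pick $x_n\in F_i^n(A_+)\subset A_+$ with $F_i(x_n)=y$. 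Since $A_+$ is compact, a subsequence $x_{n_k}\to x\in A_+$; and because $x_m\in F_i^n(A_+)$ for every $m\geq n$ and $F_i^n(A_+)$ is closed, the limit $x$ lies in $F_i^n(A_+)$ for every $n$, so $x\in\Lambda_i$. Continuity of $F_i$ gives $F_i(x)=y$, so $\Lambda_i\subset F_i(\Lambda_i)$ and equality follows.

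For $i=4$, the same argument applied to the continuous map $F_4^2$, using $F_4^2(A_+)\subset A_+$ from Proposition \ref{attract_prop}, produces a nonempty compact set $\tilde\Lambda_4$ with $F_4^2(\tilde\Lambda_4)=\tilde\Lambda_4$. Then $\Lambda_4=\tilde\Lambda_4\cup F_4(\tilde\Lambda_4)$ is compact as a finite union of continuous images of compacts, and
$$F_4(\Lambda_4)=F_4(\tilde\Lambda_4)\cup F_4^2(\tilde\Lambda_4)=F_4(\tilde\Lambda_4)\cup\tilde\Lambda_4=\Lambda_4,$$
so $\Lambda_4$ is $F_4$-invariant. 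The only step requiring genuine care is the reverse inclusion $\Lambda_i\subset F_i(\Lambda_i)$, which needs the subsequential-extraction argument above; all remaining steps follow directly from continuity of $F_i$, compactness of $A_+$, and the forward-invariance statement in Proposition \ref{attract_prop}.
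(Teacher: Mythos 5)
Your proof is correct: it assembles exactly the ingredients the paper intends (compactness of $A_+$ from the continuity of $\mu_i$ and $\varphi_0$, the forward invariance $F_i(A_+)\subset A_+$ or $F_4^2(A_+)\subset A_+$ from Proposition \ref{attract_prop}, and the standard nested-compact-sets argument with the subsequential extraction for the reverse inclusion), which is why the paper states the theorem without a written proof. Nothing is missing; your version even establishes full invariance $F_i(\Lambda_i)=\Lambda_i$ rather than mere forward invariance.
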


In order to study the upper semicontinuous curve $\varphi_\infty$, we need to study the sequence $\{\varphi_n\}_n$, or equivalently, the sequence $\{\lambda_n\}_n$. A simple computation gives us the following expressions for the $\lambda_n$ that will be useful later.
\begin{itemize}
    \item For the system \eqref{dyn_sys},
    \begin{align*}
        \lambda_{n+1}(\theta) =
        \begin{cases}
            \frac{\pi}{2} - a\mu_1(\theta-\omega) &\text{ if }\lambda_n(\theta-\omega) + \mu_1(\theta-\omega)\geq\frac{\pi}{2a},\\
             a\lambda_n(\theta-\omega) &\text{ if }\lambda_n(\theta-\omega) + \mu_1(\theta-\omega)\leq\frac{\pi}{2a}.
        \end{cases}
    \end{align*}
    \item For the system \eqref{dyn_sys_2},
    \begin{align*}
        \lambda_{n+1}(\theta) =
            \begin{cases}
                a(\mu_2(\theta-\omega)-\delta) &\text{ if }\mu_2(\theta-\omega)-\lambda_n(\theta-\omega)\leq \delta,\\
                a\lambda_n(\theta-\omega) &\text{ if }\mu_2(\theta-\omega)-\lambda_n(\theta-\omega)>\delta.
            \end{cases}
    \end{align*}
    \item For the system \eqref{dyn_sys_3},
    \begin{align*}
        \lambda_{n+1}(\theta) =
        \begin{cases}
            a\mu_3(\theta-\omega)+1 &\text{ if }\mu_3(\theta-\omega)-\lambda_n(\theta-\omega)\leq -\frac{1}{a},\\
            a\lambda_n(\theta-\omega) &\text{ if }\mu_3(\theta-\omega)-\lambda_n(\theta-\omega)\geq -\frac{1}{a}.
        \end{cases}
    \end{align*}
    \item For the system \eqref{dyn_sys_4},
    \begin{align}\label{expression_lambda_4}
        \lambda_{n+1}(\theta) =
        \begin{cases}
            1 - a\mu_4(\theta-\omega) &\text{ if }a[\mu_4(\theta-2\omega)+\lambda_n(\theta-2\omega)] - b g(\theta-2\omega)\leq\frac{1}{a},\\
            a^2\lambda_n(\theta-2\omega) &\text{ if }a[\mu_4(\theta-2\omega)+\lambda_n(\theta-2\omega)] - b g(\theta-2\omega)>\frac{1}{a}.
        \end{cases}
    \end{align}
\end{itemize}
For the system \eqref{dyn_sys_4}, we define the following sets
\begin{align*}
    I_n:=\left\{\theta\in\T\mid a[\mu_4(\theta-2\omega)+\lambda_n(\theta-2\omega)] - b g(\theta-2\omega)\leq\frac{1}{a}\right\}.
\end{align*}
Since $\{\lambda_n\}_n$ is decreasing, if $\theta\in I_{n+1}$ we have that
\begin{align*}
    a[\mu_4(\theta-2\omega)+\lambda_n(\theta-2\omega)] - b g(\theta-2\omega) \leq a[\mu_4(\theta-2\omega)+\lambda_{n+1}(\theta-2\omega)] - b g(\theta-2\omega) \leq\frac{1}{a}.
\end{align*}
Hence, $I_{n+1}\subset I_{n}$. Therefore, the set $I=\bigcap_{n\geq0}I_n$ is a compact set such that for all $\theta\in I$
\begin{align*}
    \lambda_\infty(\theta) &= 1 - a\mu_4(\theta-\omega).
\end{align*}
For the other systems we can obtain a similar result.
\begin{prop}
    Let $a$ satisfy Assumption \eqref{assumptions_a} and $b\leq b^*(a)$.
    \begin{itemize}
        \item For the system \eqref{dyn_sys}, there exists a set $I\subset\T$ such that for all $\theta\in I$
        \begin{align*}
            \lambda_\infty(\theta) &= \frac{\pi}{2} - a\mu_1(\theta-\omega).
        \end{align*}
        \item For the system \eqref{dyn_sys_2}, there exists a set $I\subset\T$ such that for all $\theta\in I$
        \begin{align*}
            \lambda_\infty(\theta-\omega)&=a(\mu_2(\theta-\omega)-\delta).
        \end{align*}
        \item For the system \eqref{dyn_sys_3}, there exists a set $I\subset\T$ such that for all $\theta\in I$
        \begin{align*}
             \lambda_\infty(\theta-\omega)&=a\mu_3(\theta-\omega)+1.
        \end{align*}
        \item For the system \eqref{dyn_sys_4}, there exists a set $I\subset\T$ such that for all $\theta\in I$
        \begin{align*}
            \lambda_\infty(\theta) &= 1 - a\mu_4(\theta-\omega).
        \end{align*}
    \end{itemize}
\end{prop}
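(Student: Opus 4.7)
The plan is to mimic the argument already sketched in the text for system \eqref{dyn_sys_4}, producing for each of the remaining three systems a nested sequence of sets on which the piecewise formula for $\lambda_{n+1}$ collapses to the ``constant'' branch. Concretely, for system \eqref{dyn_sys} I set
\[
I_n := \left\{ \theta\in\T \mid \lambda_n(\theta-\omega)+\mu_1(\theta-\omega) \geq \tfrac{\pi}{2a} \right\},
\]
for system \eqref{dyn_sys_2}
\[
I_n := \left\{ \theta\in\T \mid \mu_2(\theta-\omega)-\lambda_n(\theta-\omega) \leq \delta \right\},
\]
and for system \eqref{dyn_sys_3}
\[
I_n := \left\{ \theta\in\T \mid \mu_3(\theta-\omega)-\lambda_n(\theta-\omega) \leq -\tfrac{1}{a} \right\}.
\]
By the piecewise definition of $\lambda_{n+1}$ recalled just before the statement, on each $I_n$ the value $\lambda_{n+1}(\theta)$ coincides with the claimed expression (which depends only on $\mu_i$ and the parameters, not on $n$).

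The key step is to establish the nesting $I_{n+1}\subseteq I_n$. For this I invoke Lemma \ref{montonicity_lemma}, which gives $\lambda_n\geq\lambda_{n+1}$ in every system. For system \eqref{dyn_sys_2}, if $\theta\in I_{n+1}$ then $\mu_2(\theta-\omega)-\lambda_{n+1}(\theta-\omega)\leq\delta$; using $\lambda_n\geq\lambda_{n+1}$ I deduce $\mu_2(\theta-\omega)-\lambda_n(\theta-\omega)\leq\mu_2(\theta-\omega)-\lambda_{n+1}(\theta-\omega)\leq\delta$, so $\theta\in I_n$. The same manipulation handles \eqref{dyn_sys_3}. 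For \eqref{dyn_sys}, the condition is of the opposite form $\lambda_n+\mu_1\geq\pi/(2a)$, but again $\lambda_n\geq\lambda_{n+1}$ directly transfers the inequality from level $n+1$ to level $n$. The case \eqref{dyn_sys_4} is already in the excerpt and is the only one where the sign $a<0$ plays a role (multiplication by $a$ reverses the inequality, which is precisely what makes the bookkeeping work out).

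With the nesting in hand I define $I:=\bigcap_{n\geq 0}I_n$. For every $\theta\in I$ and every $n\geq 0$, the first branch applies, so $\lambda_{n+1}(\theta)$ equals the $n$-independent constant expression appearing in the statement. Passing to the pointwise limit $n\to\infty$, which exists by monotonicity of $\{\lambda_n\}_n$, yields $\lambda_\infty(\theta)$ equal to the claimed formula on $I$. I do not expect any substantial obstacle here: the argument is structurally identical to the one written out for \eqref{dyn_sys_4}, and the only care required is to translate the ``first branch'' condition of $\lambda_{n+1}$ into an $I_n$ expressed in $\lambda_n$ with the correct direction of inequality, then invoke monotonicity. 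The statement does not assert nonemptiness of $I$ (this, together with a measure-theoretic description of $I$, is the content of the subsequent results in Section \ref{section_b*}).
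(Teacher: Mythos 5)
Your proposal is correct and takes essentially the same approach as the paper: the paper defines exactly these nested sets $I_n$ (written out for system \eqref{dyn_sys_4} immediately before the statement), uses the monotone decrease of $\{\lambda_n\}_n$ from Lemma \ref{montonicity_lemma} to obtain $I_{n+1}\subset I_n$, and evaluates the $n$-independent branch on $I=\bigcap_{n\geq0}I_n$ before passing to the pointwise limit, leaving the other three systems as ``similar''. Your spelled-out sets for \eqref{dyn_sys}, \eqref{dyn_sys_2} and \eqref{dyn_sys_3} are precisely the intended analogues, and they yield the identity for $\lambda_\infty(\theta)$, which is what the statement means for systems \eqref{dyn_sys_2} and \eqref{dyn_sys_3} despite the shifted argument written there.
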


\section{Main results}\label{main_sect}
The main objective of this work is to study the four piecewise-linear systems presented in Section \ref{dyn_sys_sect}, with Assumption \ref{assumptions_a}. In this section, we state the main results that describe the bifurcations of the four piecewise-linear systems. We recall that:
\begin{itemize}
    \item For the systems \eqref{dyn_sys}, \eqref{dyn_sys_2} and \eqref{dyn_sys_3}, $\varphi_\infty(\theta)=\lim_n\varphi_n(\theta)$ is an invariant upper semicontinuous curve and $\mu_i$ is an invariant continuous curve, for $i=1,2,3$.
    \item For the system \eqref{dyn_sys_4}, $\varphi_\infty(\theta)=\lim_n\varphi_n(\theta)$ is a two-periodic upper semicontinuous curve and $\mu_4$ is an invariant continuous curve.
\end{itemize}

Our starting point is the formulation of two technical conditions. Both \ref{condition_analytic} and \ref{condition_factorise} are needed in Section \ref{section_b*}.
\begin{enumerate}[label=(\Alph*)]
    \item\label{condition_analytic} The function $g$ is analytic.

    \item\label{condition_factorise} For every $\theta_0\in Z_{\lambda_0}=\{\theta\in\T\mid\lambda_0(\theta)=0\}$, there exists an $\varepsilon>0$ and an even integer $m>0$ such that, for all $\theta\in (\theta_0-\varepsilon,\theta_0+\varepsilon)$, we have that $\lambda_0(\theta)=(\theta-\theta_0)^m q(\theta)$, where $q(\theta)>0$ for all $\theta\in (\theta_0-\varepsilon,\theta_0+\varepsilon)$.
\end{enumerate}
\begin{remark}
    Note that the condition \ref{condition_analytic} implies the condition \ref{condition_factorise} by a classical factorization Theorem of analytic functions \cite[Corollary 3.9]{Conway1978FunctComp}.
\end{remark}
The study of the bifurcations of the four piecewise-linear systems can be synthesised in the following four results. First, we consider the case $0<b<b^*(a)$.

\begin{prop}\label{prop_onlyonecurve}
    Let $a$ satisfy Assumption \eqref{assumptions_a} and $0<b<b^*(a)$. 
    \begin{itemize}
        \item For the system \eqref{dyn_sys} and \eqref{dyn_sys_3}, there exists a unique continuous attracting invariant curve $\varphi_\infty$ such that $\mu_i(\theta)<\varphi_\infty(\theta)\leq\varphi_0(\theta)$ for all $\theta\in\T$ and $i=1,3$.
        \item For the system \eqref{dyn_sys_2}, there exists a unique continuous attracting invariant curve $\varphi_\infty$ such that $\varphi_0(\theta)\leq\varphi_\infty(\theta)<\mu_2(\theta)$ for all $\theta\in\T$.
        \item For the system \eqref{dyn_sys_4}, there exists a unique continuous two-periodic curve $\varphi_\infty$ such that $\mu(\theta)<\varphi_\infty(\theta)\leq\varphi_0(\theta)$ for all $\theta\in\T$. Moreover, $\varphi_\infty$ is attracting as an invariant curve of $F_4^2$.
    \end{itemize} 
\end{prop}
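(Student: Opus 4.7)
I focus on system \eqref{dyn_sys_4}; the other three follow by analogous arguments, with a single iteration of $\mathcal{F}_i$ replacing $\mathcal{F}_4^2$. The plan is to show that the monotone decreasing sequence $\{\varphi_n\}$ constructed in Lemma \ref{montonicity_lemma} converges uniformly to a continuous two-periodic curve $\varphi_\infty$ satisfying $\mu_4<\varphi_\infty\leq\varphi_0$, and that $\varphi_\infty$ is both unique and attracting as an invariant curve of $F_4^2$ in this region.

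First I would rewrite the recursion in a convenient form. Using the functional equation \eqref{mu_fun}, a short computation gives $\lambda_0(\theta)=1-a\mu_4(\theta-\omega)$, which is exactly the value returned by the constant case of the recursion for $\lambda_{n+1}$ displayed in \eqref{expression_lambda_4}. The recursion therefore reads $\lambda_{n+1}(\theta)=\lambda_0(\theta)$ in the constant case, and $\lambda_{n+1}(\theta)=a^2\lambda_n(\theta-2\omega)$ in the linear case. Combining this with the hypothesis $b<b^*(a)$ (which, together with continuity on the compact torus, gives $\min_{\theta}\lambda_0>0$), I deduce pointwise strict positivity of $\lambda_\infty$: on the limit set $I=\{\theta:a\varphi_\infty(\theta-2\omega)-bg(\theta-2\omega)\leq 1/a\}$, the limit equals $\lambda_0>0$; for $\theta\notin I$, unrolling the linear case along the backward orbit $\theta-2k\omega$, the first return time $k(\theta)$ to $I$ is finite by density of the irrational rotation, so $\lambda_\infty(\theta)=a^{2k(\theta)}\lambda_0(\theta-2k(\theta)\omega)>0$.

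The main step is continuity of $\varphi_\infty$. The same unrolling gives, for $\theta\notin I$, a closed formula $\varphi_\infty(\theta)=a^{2k(\theta)}\varphi_0(\theta-2k(\theta)\omega)+R(\theta)$ with $R$ a finite sum of continuous terms in $g$. Since $k$ is locally constant wherever the backward $T_2$-orbit enters $\mathrm{int}(I)$ transversally, this formula is continuous there. The delicate point is the boundary $\partial I$; here I would use that the constant-case and linear-case expressions agree on $\partial I$ (a direct substitution, consistent with $\lambda_0(\theta)=1-a\mu_4(\theta-\omega)$), together with the strict inequality $b<b^*(a)$ and the regularity of $g$, to rule out any accumulation of jumps.

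Once continuity is established, Dini's theorem applied to the monotone sequence $\{\varphi_n\}$ of continuous functions on the compact torus gives uniform convergence, so that passing to the limit in $\varphi_{n+1}=\mathcal{F}_4^2(\varphi_n)$ yields $\varphi_\infty=\mathcal{F}_4^2(\varphi_\infty)$. For uniqueness, any other two-periodic $\psi\in(\mu_4,\varphi_0]$ satisfies $\psi=\mathcal{F}_4^{2n}(\psi)\leq\mathcal{F}_4^{2n}(\varphi_0)=\varphi_n\to\varphi_\infty$ by monotonicity of $\mathcal{F}_4^2$ in the region, and a symmetric lower bound using the strict separation $\psi>\mu_4$ forces $\psi=\varphi_\infty$. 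The attracting property for any curve in $(\mu_4,\varphi_0]$ follows from the same sandwich argument. The main obstacle is the continuity step, specifically showing that the set $\partial I$ does not produce jumps in $\varphi_\infty$; this is exactly where the strict inequality $b<b^*(a)$ plays its decisive role.
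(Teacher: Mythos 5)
Your reduction of the recursion to the two branches $\lambda_{n+1}(\theta)=\lambda_0(\theta)=1-a\mu_4(\theta-\omega)$ (constant case) and $\lambda_{n+1}(\theta)=a^2\lambda_n(\theta-2\omega)$ (linear case) is correct, and the two branches do agree on the switching boundary. But the proof has a genuine gap exactly at its central step, the continuity of $\varphi_\infty$. First, your positivity and closed-formula arguments rest on the claim that the backward orbit $\theta-2k\omega$ reaches the set $I$ in finite time ``by density of the irrational rotation''. Density of the orbit only guarantees entering \emph{open} sets; you have not shown that $I$ (or its interior) is nonempty, open, or even that it must be hit at all, so $k(\theta)$ may be undefined or unbounded on every neighbourhood. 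Second, and more seriously, the point you yourself flag as delicate --- ruling out an ``accumulation of jumps'' where the backward orbit grazes $\partial I$ or where $k(\theta)$ blows up --- is precisely the content of the proposition: this accumulation is exactly what happens at $b=b^*(a)$ and produces the SNA, so any proof must use $b<b^*(a)$ quantitatively at this point. You only assert that you ``would use'' the strict inequality and the regularity of $g$; no estimate is given, so the argument as written does not distinguish $b<b^*(a)$ from $b=b^*(a)$ and cannot be correct as it stands. The same missing ingredient reappears in your uniqueness step, where the ``symmetric lower bound'' for a curve $\psi>\mu_4$ is stated but not constructed.

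For comparison, the paper closes this gap by a two-sided squeeze rather than a pointwise unrolling: it introduces a second sequence $\rho_{n+1}=\mathcal{F}_4^2(\rho_n)$ started at $\rho_0=\mu+d_b$, where $d_b\neq0$ precisely because $b<b^*(a)$ (Lemma \ref{lemma_distance}); as long as the iterates stay in the linear branch one gets $\rho_n(\theta_0+2n\omega)-\mu(\theta_0+2n\omega)=a^{2n}d_b$, and since $|a|>1$ this is unbounded, so after finitely many steps $\rho_n$ must hit the constant branch and coincide with $\varphi_0$ on an open interval, whence $\rho_\infty=\varphi_\infty$. Then $\varphi_\infty$ is simultaneously the limit of a monotone sequence of continuous curves from above (upper semicontinuous) and from below (lower semicontinuous), hence continuous, and the same expansion argument applied to an arbitrary continuous curve strictly between $\mu$ and $\varphi_0$ gives uniqueness and attraction. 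If you want to keep your unrolling strategy, you would need to prove that for $b<b^*(a)$ the hitting time $k(\theta)$ is finite and locally bounded (e.g.\ because $\min_\theta\lambda_0>0$ forces escape from the linear branch in a number of steps controlled by $a^{2k}\min\lambda_0\le\max\lambda_0$), which is in effect reconstructing the paper's expansion argument.
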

\begin{corollary}\label{coro_numbercurves}
    Let $a$ satisfy Assumption \eqref{assumptions_a} and $0<b<b^*(a)$. Then,
    \begin{itemize}
        \item For the system \eqref{dyn_sys}, there are two unique continuous attracting invariant curves and one unique continuous repelling invariant curve.
        \item For the system \eqref{dyn_sys_2}, there is a unique continuous attracting invariant curve and two unique continuous repelling invariant curves.
        \item For the system \eqref{dyn_sys_3}, there is a unique continuous attracting invariant curve and a unique continuous repelling invariant curves.
        \item For the system \eqref{dyn_sys_4}, are two unique continuous attracting two-periodic curves and one unique continuous repelling invariant curve. Moreover, $\varphi_\infty$ is attracting as an invariant curve of $F_4^2$.
    \end{itemize}
\end{corollary}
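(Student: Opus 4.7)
The plan is to assemble this corollary from Proposition \ref{prop_onlyonecurve} together with the explicit construction and properties of $\mu_i$ (and $\hat\mu_2$) given in Section \ref{invariant_curves_sect}. First, Proposition \ref{prop_onlyonecurve} provides, for each system, one continuous attracting invariant (or two-periodic) curve $\varphi_\infty$ sitting in the prescribed region. For systems \eqref{dyn_sys_2} and \eqref{dyn_sys_3} this already accounts for the single attracting curve asserted. For system \eqref{dyn_sys}, the second attracting invariant curve $\gamma_\infty$ lying between $\tilde\gamma_1$ and $\mu_1$ is obtained by running the analogous monotone iteration scheme of Lemma \ref{montonicity_lemma} with seed $\tilde\gamma_1$ in place of $\tilde\varphi_1$; the monotonicity and boundedness of the resulting sequence, together with the hypothesis $b<b^*(a)$, produces a continuous attracting invariant curve by the same argument that yields $\varphi_\infty$. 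For system \eqref{dyn_sys_4}, the second attracting two-periodic curve is simply $\mathcal{F}_4(\varphi_\infty)$, which is continuous, two-periodic, and distinct from $\varphi_\infty$ because $\varphi_\infty$ is itself two-periodic.

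Next I would identify the repelling curves as $\mu_i$ (and additionally $\hat\mu_2$ for system \eqref{dyn_sys_2}). These are continuous by Theorem \ref{regularity_mu}, and for $0<b<b^*(a)$ their graphs are strictly contained in the interior of the linear part of $h_i$ (this is precisely the content of the definition of $b^*(a)$ together with Lemma \ref{lemma_distance}). On this linear region the vertical derivative of $F_i$ along the curve equals $a$ for $i=1,2,3$, and the vertical derivative of $F_4^2$ along $\mu_4$ equals $a^2$; under Assumption \ref{assumptions_a} these have absolute value strictly greater than one, so the fibered Lyapunov exponent is $\log|a|>0$ (respectively $\log a^2>0$), making $\mu_i$ repelling.

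For uniqueness, Proposition \ref{prop_onlyonecurve} already delivers uniqueness of the attracting curve in each specified region (applied twice for system \eqref{dyn_sys}, once to the upper region and once to its symmetric counterpart). Uniqueness of $\mu_i$ (and $\hat\mu_2$) as the only continuous invariant curve whose graph is contained in the linear part of $h_i$ follows from the fact that the linear functional equation \eqref{inv_eq_mu} has a unique $C^0$ solution, obtained explicitly via the Fourier coefficient formulas leading to Proposition \ref{explicit_factor_mu}. It remains to rule out any further continuous invariant (or two-periodic) curve whose graph might straddle the boundary between the linear and constant parts of $h_i$; the plan is to show that any such curve must lie inside one of the forward-invariant regions $A_+$ of Proposition \ref{attract_prop} (or its symmetric lower counterpart for system \eqref{dyn_sys}), and is therefore pinched by the monotone convergence $\varphi_n\to\varphi_\infty$ against an already identified attracting curve.

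The main obstacle will be this last step: excluding exotic invariant curves whose images cross the switching lines $x=\pm\tfrac{\pi}{2a}$, $x=\pm\delta$, $x=-\tfrac{1}{a}$, or $x=\tfrac{1}{a}$. The strategy I intend to use is a trapping argument: on the constant part of $h_i$, any two graphs have image differing only by the common translation $bg(\theta-\omega)$, so forward iteration quickly funnels any graph living partly in the constant part into the region bounded by $\mu_i$ and $\varphi_0$ (or its symmetric counterpart), where uniqueness is already known; conversely, any graph living entirely in the linear part must satisfy \eqref{inv_eq_mu} and thus equal $\mu_i$ (resp.\ $\hat\mu_2$). Combining these two observations exhausts the possibilities and yields the stated count of invariant and two-periodic curves.
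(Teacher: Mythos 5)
Your proposal is correct in substance, but it does not follow the route the paper takes for this corollary. The paper's proof is a short dynamical basin argument: $\mathcal{F}_4(\varphi_\infty)$ is two-periodic and attracting, and any point $(x,\theta)$ off the repelling curve has iterates tending to one of the attracting curves, so no further invariant or two-periodic continuous curves can exist (and similarly for the other three systems). Your argument instead is structural: you produce the second attracting object for \eqref{dyn_sys} by rerunning the monotone scheme of Lemma \ref{montonicity_lemma} from the lower seed and take $\mathcal{F}_4(\varphi_\infty)$ for \eqref{dyn_sys_4}, you identify the repelling curves via the vertical derivative $a$ (resp.\ $a^2$) along graphs that stay strictly in the linear zone for $b<b^*(a)$, and you exclude exotic curves by a dichotomy: a continuous invariant (or two-periodic) curve lying entirely in a linear zone solves the affine functional equation, whose bounded solution is unique because the difference of two solutions satisfies $d(\theta+\omega)=a\,d(\theta)$ with $\abs{a}>1$, hence it is $\mu_i$ (or $\hat\mu_2$); while a curve meeting a constant zone coincides with $\varphi_0$ at some fiber one step later, hence with $\varphi_n$ along the forward orbit, and since for $b<b^*(a)$ the monotone convergence $\varphi_n\to\varphi_\infty$ is uniform (Dini) and the orbit is dense, continuity forces the curve to equal $\varphi_\infty$ (or its image). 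This is essentially a fleshed-out version of the paper's remark following Proposition \ref{prop_onlyonecurve} rather than of the corollary's own proof; it is more explicit and self-contained, at the cost of being longer, and it requires two small refinements you should make explicit: for \eqref{dyn_sys_4} the dichotomy must be applied to the pair $\rho$, $\mathcal{F}_4(\rho)$ (a curve can sit in the linear zone while its image dips into the constant zone), and the ``pinching'' step is not literally an appeal to the uniqueness statement of Proposition \ref{prop_onlyonecurve} (the exotic curve need not lie above $\mu_i$), but rather the orbit-density-plus-uniform-convergence argument just described.
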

\begin{proof}
    In the system \eqref{dyn_sys_4}, by the definition of two-periodic curve, the curve $\mathcal{F}_4(\varphi_\infty)$ is two-periodic and is also attracting. This implies that there are no more two-periodic or invariant curves. Indeed, if we consider a point $(x,\theta)$ that does not belong to the repelling invariant curve, then its iterates tend to one of the two-periodic curves. In the other cases, a similar argument shows that there are only three or two invariant curves.
\end{proof}
Recall that $\lambda_\infty(\theta)=0$ is equivalent to $\varphi_\infty(\theta)=\mu_i(\theta)$ with $i=1,2,3,4$. The following statement establishes some properties of the set of angles where $\varphi_\infty$ intersects with $\mu_i$, for $i=1,2,3,4$.
\begin{thm}\label{big_thm}
    Let $a$ satisfy Assumption \eqref{assumptions_a} and $b=b^*(a)$. Then, for the systems \eqref{dyn_sys}, \eqref{dyn_sys_2}, \eqref{dyn_sys_3} and \eqref{dyn_sys_4}, the set
    \begin{align*}
        A = \{\theta\in\T\mid\lambda_\infty(\theta)=0\} 
    \end{align*}
    is residual and has zero Lebesgue measure. Additionally, $A$ contains a residual set $R$ which is positively and negatively invariant.
\end{thm}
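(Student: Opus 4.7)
The strategy I would follow is to collapse the two-branch recursion for $\lambda_n$ into a single clipped multiplication and then read off the structure of $A$ from the resulting closed form. I describe this for system \eqref{dyn_sys_4}; the other three systems are handled identically, with $2\omega$ replaced by $\omega$ and $a^2$ by $a$. The key algebraic step is that, since $\mu_4$ stays in the linear region of $h_4$, $\mu_4(\theta)=a\mu_4(\theta-\omega)-bg(\theta-\omega)$, which yields the clean identity $\lambda_0(\theta)=1-a\mu_4(\theta-\omega)$. The saturation condition in the recursion from Lemma~\ref{montonicity_lemma} then rearranges to $a^2\lambda_n(\theta-2\omega)\ge\lambda_0(\theta)$, so both branches fuse into
\[
\lambda_{n+1}(\theta)=\min\bigl(\lambda_0(\theta),\,a^2\lambda_n(\theta-2\omega)\bigr),
\]
and iterating gives $\lambda_n(\theta)=\min_{0\le k\le n}a^{2k}\lambda_0(\theta-2k\omega)$, whence
\[
\lambda_\infty(\theta)=\inf_{k\ge 0}a^{2k}\lambda_0(\theta-2k\omega).
\]

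From this formula the topological claims are immediate. Setting $U_k^{(N)}=\{\theta\in\T:a^{2k}\lambda_0(\theta-2k\omega)<1/N\}$, each $U_k^{(N)}$ is open by continuity of $\lambda_0$, and $A=\bigcap_{N\ge 1}\bigcup_{k\ge 0}U_k^{(N)}$, so $A$ is $G_\delta$. By the corollary characterising $b^*(a)$ there is at least one $\theta^*\in\T$ with $\lambda_0(\theta^*)=0$, namely $\theta^*=\theta_m+\omega$ where $\mu_4(\theta_m)=1/a$. Then $\theta^*+2k\omega\in U_k^{(N)}$ for every $k$ and $N$, and since $\omega\notin 2\pi\Q$ rotation by $2\omega$ is minimal on $\T$, this forward orbit is dense; hence $\bigcup_k U_k^{(N)}$ is open and dense, and $A$ is a dense $G_\delta$, i.e., residual.

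For zero Lebesgue measure I invoke condition~\ref{condition_factorise}: near each zero $\theta_0$ of $\lambda_0$ one has $\lambda_0(\theta)\ge c_0|\theta-\theta_0|^{m_0}$ with $m_0$ an even positive integer, and the factorisation forces the zeros to be isolated, hence finitely many on $\T$, with $\lambda_0\ge\delta>0$ off a fixed neighbourhood of the zero set. This yields $|U_k^{(\varepsilon)}|\le C\,(\varepsilon/a^{2k})^{1/m_{\max}}$ with $m_{\max}=\max m_0$, and summing the geometric series in $k$ gives $|\bigcup_{k}U_k^{(\varepsilon)}|\le C'\,\varepsilon^{1/m_{\max}}\to 0$ as $\varepsilon\to 0$; since $A\subseteq\bigcup_k U_k^{(\varepsilon)}$ for every $\varepsilon>0$, $|A|=0$. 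For the invariant residual subset, the fixed-point identity $\mathcal{F}_4^2\varphi_\infty=\varphi_\infty$ together with the linear-regime recursion for $\mu_4$ gives directly $\varphi_\infty(\theta)=\mu_4(\theta)\Rightarrow\varphi_\infty(\theta+2\omega)=\mu_4(\theta+2\omega)$, so $A+2\omega\subseteq A$; then $R:=\bigcap_{n\in\Z}(A-2n\omega)\subseteq A$ is forward and backward invariant under rotation by $2\omega$ and, each $A-2n\omega$ being residual (rotations are homeomorphisms), the countable intersection $R$ is residual by the Baire category theorem. The single nontrivial step is the telescoping of the recursion into a single $\min$; everything afterwards is soft topology and a Borel--Cantelli-type estimate.
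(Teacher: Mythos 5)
Your argument is correct, but it takes a genuinely different route from the paper's. The telescoping step is valid: using $\mu(\theta-\omega)=a\mu(\theta-2\omega)-bg(\theta-2\omega)$ and $a<-1$, the branch condition in \eqref{expression_lambda_4} does rearrange to $a^{2}\lambda_n(\theta-2\omega)\ge\lambda_0(\theta)$, so $\lambda_{n+1}(\theta)=\min\bigl(\lambda_0(\theta),a^{2}\lambda_n(\theta-2\omega)\bigr)$ and $\lambda_\infty(\theta)=\inf_{k\ge0}a^{2k}\lambda_0(\theta-2k\omega)$; the same fusion works for the other three systems with $\omega$ and $a$. The paper never writes this closed form. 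It proves residuality by showing $A$ is dense (the forward orbit of $\theta_0$, Lemma \ref{zeros_lambda}) and then identifying $A$ with the set of continuity points of the upper semicontinuous limit $\varphi_\infty$, and it proves zero measure ergodically: the ratio functions $\psi_n$ of Definition \ref{definition_psi} are monotone and bounded (Lemma \ref{integ_psy}), $\int_\T\log\psi\,d\theta\le0$ (Lemma \ref{lem_int_psi}), and a positive-measure $A$ would have full measure by ergodicity, forcing $\psi=a^{2}$ a.e.\ and $\int_\T\log\psi=\log a^{2}>0$, a contradiction. Your dense-$G_\delta$ representation plus the sublevel-set/Borel--Cantelli estimate replaces that machinery and even gives a rate, $|\bigcup_k U_k^{(\varepsilon)}|\le C'\varepsilon^{1/m_{\max}}$; note that both proofs lean on the same unstated hypotheses \ref{condition_analytic}/\ref{condition_factorise} (you use the even-order factorisation for the bound $|\{\lambda_0<\delta\}|\le C\delta^{1/m_{\max}}$ and the finiteness of $Z_{\lambda_0}$, the paper uses it for the integrability of $\log\lambda_n$), so you are consistent with the paper's framework, while the paper's ergodic argument needs only log-integrability of $\lambda_0$ rather than a polynomial lower bound at the zeros.

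One small discrepancy: for system \eqref{dyn_sys_4} your $R=\bigcap_{n\in\Z}(A-2n\omega)$ is invariant only under rotation by $\pm2\omega$, whereas the paper's $R=\bigcap_{j\in\Z}R_j$ is built from all integer translates by $\omega$ and is therefore positively and negatively invariant under the base rotation $\theta\mapsto\theta+\omega$, which is the natural reading of the statement. The repair is immediate: intersect the translates $A-j\omega$ over all $j\in\Z$; each translate is residual, the countable intersection is residual by Baire, it is contained in $A$ (take $j=0$), and it is invariant under $\pm\omega$, so no new idea is needed.
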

The following result shows that, after the parameter $b^*(a)$, there is a change in the number of invariant curves on each system.
\begin{prop}\label{curve_between}
    Let $a$ satisfy Assumption \eqref{assumptions_a} and $b>b^*(a)$. Then, 
    \begin{itemize}
        \item For the systems \eqref{dyn_sys}, \eqref{dyn_sys_2} and \eqref{dyn_sys_4}, there exists a unique continuous invariant curve.
        \item For the system \eqref{dyn_sys_3}, there exists no continuous invariant curve.
    \end{itemize}
\end{prop}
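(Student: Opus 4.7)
The proof naturally splits into two cases: the non-existence claim for \eqref{dyn_sys_3}, and the existence and uniqueness claim for \eqref{dyn_sys}, \eqref{dyn_sys_2}, \eqref{dyn_sys_4}. Consistently with the rest of the paper, I would write out the details for \eqref{dyn_sys_4} and indicate how the other cases follow by analogy.

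For \eqref{dyn_sys_3}, my plan is a direct contradiction argument. Suppose $\psi$ is a continuous invariant curve with $b>b^*(a)$. The key identity is $h_3(x)=\max(-1,ax)$, which yields the pointwise bounds $h_3(x)\ge ax$ and $h_3(x)\ge -1$. Applied to the functional equation $\psi(\theta+\omega)=h_3(\psi(\theta))+bg(\theta)$, the first gives $\psi(\theta+\omega)\ge a\psi(\theta)+bg(\theta)$; iterating $n$ times and dividing by $a^n$, the boundedness of $\psi$ sends $\psi(\theta+n\omega)/a^n\to 0$ (since $a>1$), producing $\psi(\theta)\le -b\sum_{k\ge 0}g(\theta+k\omega)/a^{k+1}=\mu_3(\theta)$, the Fourier-series solution of the linear equation. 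The second inequality gives directly $\psi(\theta)\ge -1+bg(\theta-\omega)=\tilde\varphi_3(\theta)$. By the Corollary after Proposition \ref{explicit_factor_mu}, for $b>b^*(a)$ we have $b\,\boldsymbol\mu_3(\theta_m)<-1/a$, so $\min_\theta\mu_3<-1/a$; then Lemma \ref{lemma_distance} gives $\min_\theta(\mu_3-\tilde\varphi_3)=a\min_\theta(\mu_3(\theta-\omega)+1/a)<0$, producing $\theta_0$ with $\mu_3(\theta_0)<\tilde\varphi_3(\theta_0)$, which contradicts $\tilde\varphi_3\le\psi\le\mu_3$.

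For \eqref{dyn_sys}, \eqref{dyn_sys_2}, \eqref{dyn_sys_4}, I would first establish existence by extending the iteration of Lemma \ref{montonicity_lemma}. The monotonicity of $\{\varphi_n\}$ uses only that $h_i$ (or $h_i^2$ in the case $i=4$) is monotone increasing, which is preserved for $b>b^*(a)$. Uniform boundedness is also preserved, since $h_i$ (respectively $h_4^2$, which takes values in $[a,1]$) is bounded on the constant pieces. Therefore the pointwise limit $\varphi_\infty$ exists. For \eqref{dyn_sys_4}, one must then argue that $\varphi_\infty$ is genuinely $\mathcal{F}_4$-invariant and not merely $\mathcal{F}_4^2$-fixed: applying $\mathcal{F}_4$ to $\mathcal{F}_4^2(\varphi_\infty)=\varphi_\infty$ shows $\mathcal{F}_4(\varphi_\infty)$ is also $\mathcal{F}_4^2$-fixed, and uniqueness (below) forces $\mathcal{F}_4(\varphi_\infty)=\varphi_\infty$. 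For uniqueness, if $\psi_1,\psi_2$ are two continuous invariant curves, bounding arguments analogous to those used above for \eqref{dyn_sys_3}, applied to the appropriate one-sided piecewise-linear inequalities for $h_i$, localize both $\psi_j$ in a common narrow strip; on every fiber where both curves land in the constant piece of $h_i$, the next iterate is forced to equal $\tilde\varphi_i$ (up to the appropriate shift), so $\psi_1=\psi_2$ there. Minimality of the irrational rotation together with continuity then propagates the equality to all of $\T$. Continuity of $\varphi_\infty$ itself can be read off from the same picture via Dini's theorem, since for $b>b^*(a)$ the iteration becomes uniformly contracting after finitely many steps once orbits hit the constant piece.

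The main obstacle I expect is the continuity and uniqueness step for \eqref{dyn_sys_4}. Past the bifurcation the clean trap $\mu_4\le\varphi_n\le\varphi_0$ of Lemma \ref{montonicity_lemma} is lost, so one cannot directly reuse that structure; one must show that $\varphi_\infty$ nevertheless lies in a region on which $\mathcal{F}_4^2$ is a genuine contraction, and that no rival continuous 2-periodic curve survives the period-doubling bifurcation. This requires a careful case analysis of the three pieces of $h_4^2$, namely the constant value $a$ on $(-\infty,1/a]$, the expanding branch $a^2 x$ on $(1/a,1/a^2)$, and the constant value $1$ on $[1/a^2,\infty)$, to pin down exactly which portions of the fibers are in the expanding regime and show that the expanding fiber set is too small to support a distinct invariant curve.
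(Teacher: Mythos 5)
Your argument for \eqref{dyn_sys_3} is correct and complete, and it takes a genuinely different (and cleaner) route than the paper: the two one-sided bounds $h_3(x)\ge ax$ and $h_3(x)\ge -1$ trap any continuous invariant curve between $\tilde\varphi_3$ and the unique bounded solution $\mu_3=b\boldsymbol\mu_3$ of the linear functional equation, and for $b>b^*(a)$ these two constraints are incompatible near the minimizer of $\boldsymbol\mu_3$. The paper instead rules out invariant curves by following an auxiliary translate of $\mu$ whose offset grows like $a^n\delta$; your sandwich argument is self-contained and avoids that bookkeeping.

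For the first bullet, however, the proposal has a genuine gap, and it sits exactly at the point you yourself flag as the ``main obstacle''. The uniqueness/continuity step is asserted, not proved: you need a nonempty open set of fibres on which any candidate continuous invariant (or $\mathcal{F}_4^2$-fixed) curve lands in the constant branch, after which it coincides with the appropriate shift of $\varphi_0$ and the identity propagates by invariance and minimality of the rotation. Producing that open set is the quantitative heart of the paper's proof: for $b>b^*(a)$ one sets $\delta=\frac{1}{a}-\mu(\theta_0-\omega)>0$ and considers $\rho_0=\mu+\delta$, the smallest vertical translate of $\mu$ whose graph touches the boundary of the linear region; as long as the orbit at the distinguished angle stays in the linear branch, the offset from $\mu$ is exactly $a^{2n}\delta$ (resp.\ $a^{n}\delta$ for the one-step systems), which is unbounded, so after finitely many double iterates the orbit must fall onto the constant branch, where the curve equals $\varphi_0$ on an open interval. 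Without this computation neither the ``common narrow strip'', nor the continuity of $\varphi_\infty$ (Dini's theorem gives nothing until you already know the limit is continuous), nor the absence of a rival $\mathcal{F}_4^2$-fixed curve is established. A second, smaller problem is the existence claim for \eqref{dyn_sys_2}: $h_2$ is unbounded on its outer branches, and ``uniform boundedness is preserved'' fails there — once the colliding attractor–repeller pair is destroyed for $b>b^*(a)$, the monotone sequence $\varphi_n$ of Lemma \ref{montonicity_lemma} escapes to infinity on a set of fibres, so the unique continuous invariant curve (the surviving repelling curve) is not obtained as the limit of that iteration.
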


At $b=b^*(a)$, the closure of the attracting set contains a repelling set. In contrast, we are able to prove that almost every orbit falls into the semicontinuous attracting curve in a finite number of iterates. We denote by $(x_n,\theta_n)$ the $n$-th iteration of a pair $(x_0,\theta_0)\in\R\times\T$. Let $\pi_2\colon\R\times\T\to\T$ be the projection map to the second component.
\begin{thm}\label{finite_num_iter}
    Let $a$ satisfy Assumption in \eqref{assumptions_a} and $b=b^*(a)$. There exists a set $\Omega\subset\R\times\T$ such that $\pi_2(\Omega)$ has full Lebesgue measure and, for any pair $(x_0,\theta_0)\in \Omega$, there exists an $n_0 = n_0(x_0,\theta_0)$ such that $(x_{n_0},\theta_{n_0})$ belongs to the graph of the semicontinuous attracting curve. Moreover, we have that:
    \begin{itemize}
        \item For each system \eqref{dyn_sys}, \eqref{dyn_sys_3} and \eqref{dyn_sys_4}, there exists a set of full Lebesgue measure $E\subset\T$ such that $\Omega=\{(x,\theta)\in\R\times E\mid\mu_i(\theta)<x\}$ with $i=1,3,4$ and every orbit falls into the graph of $\varphi_\infty$ in a finite number of iterates.
        \item For the system \eqref{dyn_sys_2}, there exists a set of full Lebesgue measure $E\subset\T$ such that $\Omega=\{(x,\theta)\in\R\times E\mid\mu_2(\theta)>x>\varphi_\infty(\theta)\}$ and every orbit falls into the graph of $\varphi_\infty$ in a finite number of iterates.
    \end{itemize}
\end{thm}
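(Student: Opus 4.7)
The plan is to prove the statement for system \eqref{dyn_sys_4}; the others are analogous, with the corresponding curves $\tilde\varphi_i$ playing the role of $\varphi_0$ and $\mathcal{F}_i$ in place of $\mathcal{F}_4^2$ (since $h_i$ is monotone increasing for $i=1,2,3$).

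First I would show that any orbit with $x_0 > \mu_4(\theta_0)$ reaches the curve $\varphi_0 = \tilde\varphi_4$ in finitely many iterates. Setting $y_n = x_n - \mu_4(\theta_n)$ and using the functional equation for $\mu_4$, a direct calculation gives $y_{n+2} = a^2 y_n$ whenever the two applications of $h_4$ inside $\mathcal{F}_4^2$ both sit in the linear branch. Since $a^2 > 1$ while the orbit is confined to a bounded region (ultimately trapped in the compact invariant set $\Lambda_4$ of Theorem \ref{thm_compactinvregion}), $|y_n|$ cannot grow indefinitely, so at some finite time $n$ the constant branch of $h_4$ is hit and $x_{n+1} = 1 - bg(\theta_n) = \varphi_0(\theta_{n+1})$. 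Denoting the landing time by $n_1$, Lemma \ref{montonicity_lemma} and induction give $x_{n_1 + 2k} = \varphi_k(\theta_{n_1 + 2k})$ for all $k \geq 0$.

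The core step is to prove that the set $I = \bigcap_n I_n$, on which $\varphi_\infty = \varphi_0$, has full Lebesgue measure. By monotone convergence and continuity of $h_4$, the curve $\varphi_\infty$ satisfies $\mathcal{F}_4^2(\varphi_\infty) = \varphi_\infty$ pointwise. For $\theta \notin I$ the second $h_4$ does not hit its constant branch, and at $b = b^*(a)$ the inequality $\mu_4 \geq 1/a$ (with equality only at $\theta_m$) keeps the first $h_4$ in its linear branch up to a null set, forcing the expansive identity $\lambda_\infty(\theta) = a^2 \lambda_\infty(\theta - 2\omega)$ almost everywhere on $I^c$. Iterating this identity and using the uniform bound $\lambda_\infty \leq \lambda_0$ together with the fact that $A = \{\lambda_\infty = 0\}$ has zero Lebesgue measure (Theorem \ref{big_thm}), one combines an invariant-set argument for the rotation by $2\omega$ with the regularity of $\lambda_0$ near its zeros (Condition \textbf{(B)}) to conclude that $|I^c| = 0$. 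This measure-theoretic step is the main obstacle: one must either rule out a backward-invariant subset of $I^c$ of positive measure, or show that on the rest of $I^c$ the relation $\lambda_\infty(\theta) = a^{2k} \lambda_0(\theta - 2k\omega)$ at the first backward return to $I$ would force the backward orbit to approach the zeros of $\lambda_0$ at an exponential rate incompatible with the rotation's statistics.

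With $|I| = 1$ in hand, unique ergodicity of the rotation by $\omega$ ensures that for a.e.\ $\theta_0 \in \T$ the forward orbit $\{\theta_0 + n\omega\}$ visits $I$ with full density. Let $E$ be this full-measure set. Given $\theta_0 \in E$ and $x_0 > \mu_4(\theta_0)$, after the finite landing time $n_1$ choose the smallest $k \geq 0$ with $\theta_{n_1 + 2k} \in I$; then
\[
x_{n_1 + 2k} \;=\; \varphi_k(\theta_{n_1 + 2k}) \;=\; \varphi_0(\theta_{n_1 + 2k}) \;=\; \varphi_\infty(\theta_{n_1 + 2k}),
\]
so $(x_{n_1 + 2k}, \theta_{n_1 + 2k})$ belongs to the graph of $\varphi_\infty$. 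The parallel arguments for systems \eqref{dyn_sys} and \eqref{dyn_sys_3} are essentially identical (with $\mathcal{F}_i$ directly, no squaring needed), while for system \eqref{dyn_sys_2} the restriction $x > \varphi_\infty$ in $\Omega$ is imposed because orbits with $x < \varphi_\infty$ are attracted to the other invariant curve $\hat\mu_2$ rather than to $\varphi_\infty$.
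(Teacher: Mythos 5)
Your first step is fine and is essentially the second case of the paper's proof: the expansion $y_{n+2}=a^{2}y_n$ away from $\mu_4$, together with boundedness, forces the orbit to hit the constant branch of $h_4$, so it lands on $\varphi_0$ in finite time and thereafter follows $\varphi_k$. The gap is your ``core step'': the claim that $I=\bigcap_n I_n$, the set where $\varphi_\infty=\varphi_0$, has full Lebesgue measure is not only left unproved (you flag it yourself as the main obstacle) --- it is false. Indeed $\varphi_\infty$ is upper semicontinuous, being the decreasing pointwise limit of the continuous curves $\varphi_n$; by Theorem \ref{big_thm} the set $A=\{\theta\in\T\mid\lambda_\infty(\theta)=0\}$ is residual, hence dense, while $\lambda_0$ is continuous with finitely many zeros. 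Pick $\theta^*\in A$ with $\lambda_0(\theta^*)>0$. If $I$ had full measure it would be dense, so there would exist $\theta_n\in I$ with $\theta_n\to\theta^*$ and $\varphi_\infty(\theta_n)=\varphi_0(\theta_n)\to\varphi_0(\theta^*)=\mu(\theta^*)+\lambda_0(\theta^*)>\mu(\theta^*)=\varphi_\infty(\theta^*)$, contradicting upper semicontinuity at $\theta^*$. Consequently no refinement of the kind you sketch (ruling out a backward-invariant positive-measure part of $I^c$, exponential approach to the zeros of $\lambda_0$, etc.) can close this step, and the final chain $x_{n_1+2k}=\varphi_k=\varphi_0=\varphi_\infty$ cannot be established for a.e.\ $\theta_0$ along this route.

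The paper's argument needs much less: only that $B=\{\theta\in\T\mid a\varphi_\infty(\theta)-bg(\theta)<\tfrac1a\}$ has \emph{positive} measure, which is proved inside Proposition \ref{lyap_exp} (if $B$ were null, $\lambda_\infty(\theta+2\omega)=a^{2}\lambda_\infty(\theta)$ a.e.\ would kill all Fourier coefficients of $\lambda_\infty$, contradicting Theorem \ref{big_thm}). Ergodicity of the irrational rotation then makes $C=\bigcup_n T_{2\omega}^{-n}(B)$ of full measure, and the landing mechanism is different from yours: once the orbit is (weakly) above $\varphi_\infty$ --- which your first step already provides, since after hitting the constant branch $x=\varphi_0\geq\varphi_\infty$ --- one waits until the angle enters $B$; at that moment $x>\varphi_\infty$ and $a<0$ give $ax-bg<a\varphi_\infty-bg<\tfrac1a$, so both the orbit point and the curve are mapped by the next $h_4$ to the same constant value, and the orbit lies exactly on the attractor from then on. In other words, you do not need the finite-stage curve $\varphi_k$ to already coincide with $\varphi_\infty$ at the landing angle; you need the attracting curve itself to visit the flat branch, and positive measure of $B$ plus ergodicity suffices. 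Your scheme could in fact be repaired in exactly this way, since $I\supset T_{2\omega}(B)$ has positive (not full) measure, which is the paper's argument in disguise. Finally, your justification of the restriction for system \eqref{dyn_sys_2} is off: $\hat\mu_2$ is a repelling curve, so orbits with $x<\varphi_\infty$ are not attracted to it; the restriction simply delimits the region where the collapse argument above is carried out.
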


Furthermore, for $b=b^*(a)$, we are able to compute the Lyapunov exponent of the attracting invariant and two-periodic curves.
\begin{prop}\label{lyap_exp}
    Let $a$ satisfy Assumption in \eqref{assumptions_a} and $b=b^*(a)$. Then the Lyapunov exponent of $\varphi_\infty$ is $-\infty$.
\end{prop}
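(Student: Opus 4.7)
The plan is to exploit the piecewise structure of $h_i$: on each constant piece one has $h_i'\equiv 0$, so if the forward $F_i$-orbit of a point on $\varphi_\infty$ ever visits such a piece, then a single term of
\[
\lambda(\varphi_\infty) = \lim_{n\to\infty}\frac{1}{n}\sum_{k=0}^{n-1}\log\abs{h_i'(x_k)}
\]
equals $-\infty$, forcing the Lyapunov exponent itself to be $-\infty$. It therefore suffices to show that for Lebesgue-a.e.\ $\theta_0\in\T$, the $F_i$-orbit of $(\varphi_\infty(\theta_0),\theta_0)$ meets the constant piece of $h_i$ at some iterate.

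I work out \eqref{dyn_sys_4} in detail; the other families are entirely analogous once $\tilde\varphi_4$ and the breakpoint $1/a$ are replaced by the corresponding curve and breakpoint. Define $J=\{\theta\in\T:\varphi_\infty(\theta)=\tilde\varphi_4(\theta)\}$. From $\mathcal{F}_4^2(\varphi_\infty)=\varphi_\infty$ and $\tilde\varphi_4(\theta)=1-bg(\theta-\omega)$, a direct computation shows that $\theta\in J$ iff $h_4(\varphi_\infty(\theta-2\omega))-bg(\theta-2\omega)\le 1/a$, so $J$ coincides with the set $I$ introduced just after \eqref{expression_lambda_4}. The central claim to establish is that $\abs{J}>0$, which I plan to prove via Theorem \ref{finite_num_iter}: for a.e.\ $\theta$ the orbit of $(\tilde\varphi_4(\theta),\theta)$ reaches the graph of $\varphi_\infty$ in finitely many steps, and since $F_4$ is affine and injective on the linear piece while $\varphi_\infty\le\tilde\varphi_4$ with strict inequality off $J$ (by the monotonicity of Lemma \ref{montonicity_lemma}, an orbit that avoided the constant piece throughout would merely shadow $\varphi_n>\varphi_\infty$ off $J$, contradicting a finite landing time), the landing angle must lie in $J$. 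Thus $\bigcup_{n\ge 0}(J-n\omega)$ has full Lebesgue measure, and by unique ergodicity of the rotation $\theta\mapsto\theta+\omega$ this forces $\abs{J}>0$.

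Shifting the defining inequality of $J$ by $\omega$, the set $E=\{\theta:\psi_\infty(\theta)\le 1/a\}$, where $\psi_\infty=\mathcal{F}_4(\varphi_\infty)$, equals $J-\omega$ and so also has positive Lebesgue measure. By equidistribution of $\{\theta_0+k\omega\}_k$, for a.e.\ $\theta_0$ some odd $k$ satisfies $\theta_0+k\omega\in E$; at this iterate the $x$-coordinate of the $F_4$-orbit of $(\varphi_\infty(\theta_0),\theta_0)$ equals $\psi_\infty(\theta_0+k\omega)\le 1/a$, which lies in the constant piece of $h_4$. Hence $\abs{h_4'(x_k)}=0$ and $\lambda(\varphi_\infty)=-\infty$. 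The main difficulty I anticipate is the geometric step that finite-time landing on $\varphi_\infty$ forces the orbit to traverse the constant piece; once this is pinned down via the monotonicity of $\{\varphi_n\}$ in Lemma \ref{montonicity_lemma}, the remaining steps are routine rotation dynamics.
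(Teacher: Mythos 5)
Your overall reduction is sound in spirit: the Lyapunov exponent is $-\infty$ as soon as the set of angles at which the orbit of $\varphi_\infty$ enters the constant branch of $h_4$ (your $J$, equivalently the paper's $B=\{\theta\in\T\mid a\varphi_\infty(\theta)-bg(\theta)<\frac{1}{a}\}$ up to a shift and the boundary case) has positive measure, and your geometric observation that a finite-time landing on $\varphi_\infty$ can only occur through the constant branch is essentially correct. The genuine gap is where you obtain $\abs{J}>0$: you invoke Theorem \ref{finite_num_iter}. But the paper's proof of Theorem \ref{finite_num_iter} itself rests on the fact that $C=\bigcup_{n}T_{2\omega}^{-n}(B)$ has full measure, which needs $\abs{B}>0$ --- and that positivity is exactly what is established at the start of the paper's proof of Proposition \ref{lyap_exp}, before Theorem \ref{finite_num_iter} is proved. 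So, relative to the actual logical structure, your argument is circular: the theorem you use to produce a positive-measure $J$ is proved using the very positivity you are trying to derive. What is missing is an independent proof that $B$ is not null; the paper supplies a short one: if $\abs{B}=0$, then a.e.\ the full step of $F_4^2$ along $\varphi_\infty$ is linear, so $\lambda_\infty(\theta+2\omega)=a^2\lambda_\infty(\theta)$ a.e.; since $a^2>1$, comparing Fourier coefficients forces $\lambda_\infty=0$ a.e., contradicting Theorem \ref{big_thm}. With that in hand, your orbit-wise Birkhoff formulation and the paper's direct computation $\Lambda=\frac{1}{2\pi}\log(a^2)\,m(\T\smallsetminus B)+\frac{1}{2\pi}(-\infty)\,m(B)=-\infty$ give the same conclusion.

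Two smaller points you would still need to fix. First, the corner: the paper sets $h_4'(\frac{1}{a})=a$, so a visit to the boundary value $x_k=\frac{1}{a}$ contributes no $-\infty$ term; your set $E$ is defined with $\leq$, so you need positivity of the strict-inequality set (again most easily obtained by the invariance/Fourier argument above, not by your landing argument, which only yields the non-strict version). Second, landings at odd iterates put the orbit on $\mathcal{F}_4(\varphi_\infty)$ rather than $\varphi_\infty$ at even times, so the ``minimal landing time lies in $J$'' step needs the extra remark that an odd-time landing forces an even-time landing one step later; also, a.e.\ hitting of a positive-measure set along $\{\theta_0+k\omega\}_k$ is an ergodicity statement, not a consequence of equidistribution, since $E$ need not contain an interval.
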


We can prove that, before and after the parameter $b=b^*(a)$, the curves are piecewise-differentiable.
\begin{thm}\label{regularity_thm}
    Let $a$ satisfy Assumption in \eqref{assumptions_a}. Assume that $0<b<b^*(a)$ or $b>b^*(a)$. Then the curve $\varphi_\infty$ is piecewise $C^{1+\tau}$ with $\tau>0$.
\end{thm}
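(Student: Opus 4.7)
The plan is to exploit the fixed-point characterisation $\varphi_\infty=\mathcal{F}_i(\varphi_\infty)$ (or $\mathcal{F}_4^2(\varphi_\infty)=\varphi_\infty$ in system \eqref{dyn_sys_4}) to transfer the $C^{1+\tau}$ regularity of $g$ to $\varphi_\infty$, with the only possible losses occurring at a locally finite set of angles produced by the threshold of the piecewise-linear map $h_i$. By Propositions \ref{prop_onlyonecurve} and \ref{curve_between}, $\varphi_\infty$ is continuous in both parameter regimes ($0<b<b^*(a)$ and $b>b^*(a)$), and in both cases its graph is strictly separated from that of $\mu_i$ (bounded away by a uniform gap). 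I will write the argument for \eqref{dyn_sys_4}; the remaining systems are analogous.

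The first step is to partition $\T$ according to which branch of $h_4$ is active in the equation
\[
\varphi_\infty(\theta) = h_4\!\bigl(h_4(\varphi_\infty(\theta-2\omega)) - bg(\theta-2\omega)\bigr) - bg(\theta-\omega).
\]
Let $C=\{\theta\in\T : h_4(\varphi_\infty(\theta-2\omega))-bg(\theta-2\omega)\le 1/a\}$ and $L=\T\setminus C$. On the interior of $C$ the equation collapses to $\varphi_\infty(\theta)=1-bg(\theta-\omega)$, which is globally $C^{1+\tau}$ since $g\in C^{1+\tau}$. On the interior of $L$, the outer $h_4$ is linear and the equation becomes an affine recurrence; iterating it along the orbit of $\theta$ under the rotation by $-2\omega$ either terminates after finitely many steps at a point of $C$ (giving a closed-form expression for $\varphi_\infty(\theta)$ as a finite linear combination of shifts of $g$, hence $C^{1+\tau}$) or persists indefinitely, in which case the argument of Theorem \ref{regularity_mu} applies and yields a Fourier-series representation inheriting the decay rate of the coefficients of $g$; combined with Remark \ref{remarks_curves}\ref{regularity_remark} (applied in a one-sided / localised fashion), this produces $C^{1+\tau}$ regularity on that orbit-stable set.

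The main obstacle is showing that the common boundary $\partial C$ is locally finite, so that the word \emph{piecewise} really applies. The argument I have in mind proceeds by contradiction. Define $\Psi(\theta):=h_4(\varphi_\infty(\theta-2\omega))-bg(\theta-2\omega)-1/a$, which is continuous. If $\partial C$ had an accumulation point $\theta_0$, continuity gives $\Psi(\theta_0)=0$, and the two local formulae for $\varphi_\infty$ coming from the constant and linear branches must agree on a set with that accumulation point. Plugging this equality into the functional equation and transporting along the dense orbit $\{\theta_0+k\omega\}_k$ (irrationality of $\omega$), one propagates the coincidence to a residual set, which forces $\varphi_\infty\equiv \mu_4$ on a dense subset — contradicting the strict separation $\inf_\theta(\varphi_\infty-\mu_4)>0$ valid for $b\neq b^*(a)$ by Lemma \ref{lemma_distance} and the explicit expression of $b^*(a)$. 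An alternative quantitative route is to combine the monotone convergence $\varphi_n\to\varphi_\infty$ (uniform by Dini, using continuity of $\varphi_\infty$) with an induction showing that each $\varphi_n$ has only finitely many breakpoints (each iteration can at most double the number of breakpoints, modulated by $g$), and use the uniform gap between $\varphi_\infty$ and $\mu_4$ to ensure that breakpoints do not accumulate in the limit. Either route bounds $\#\partial C$ and closes the proof.
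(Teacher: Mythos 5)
Your core idea — that away from $b^*(a)$ the curve $\varphi_\infty$ locally coincides with an explicit finite composition of $h_4$ with shifts of $g$, because the orbit eventually activates the constant branch — is the same mechanism the paper uses (the paper runs it forward: the set $B=\{\theta:\,a\varphi_\infty(\theta)-bg(\theta)<1/a\}$ has positive measure, so $\bigcup_{n}T_{2\omega}^{-n}(B)$ has full measure; continuity of $\varphi_\infty$ upgrades each such point to an open interval on which $\varphi_\infty=\varphi_{n}$ for a finite $n$, and finitely many rotates of that interval cover $\T$; this is Proposition \ref{regularity_prop}). However, two steps of your write-up do not hold as stated. First, the dichotomy on $L$: the "persists indefinitely" branch cannot be handled by "the argument of Theorem \ref{regularity_mu} applied in a one-sided/localised fashion" — the Fourier argument needs the linear functional equation to hold on all of $\T$, and there is no local Fourier representation on a proper orbit-stable subset. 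What is actually needed is to \emph{exclude} that case. For $0<b<b^*(a)$ this does follow from the uniform gap $\inf_\theta(\varphi_\infty-\mu_4)>0$: if the backward orbit stayed in the linear branch forever, then $\lambda_\infty(\theta-2n\omega)=a^{-2n}\lambda_\infty(\theta)\to0$, a contradiction — but you never make this argument. For $b>b^*(a)$ your asserted uniform separation from $\mu_4$ is unsupported and in fact contradicted by the very facts you cite: by Lemma \ref{lemma_distance} and the definition of $b^*(a)$, for $b>b^*(a)$ one has $\min_\theta(\varphi_0-\mu_4)<0$ (this is the starting point of the proof of Proposition \ref{curve_between}), so $\mu_4$ is no longer an invariant curve below $\varphi_\infty$ and the gap argument is not available; in that regime the needed "orbit reaches the constant branch" statement comes instead from the unbounded expansion estimate $a^{2n}\delta$ used in Proposition \ref{curve_between}.

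Second, the local finiteness of $\partial C$. Your contradiction route has no mechanism behind it: agreement of the two branch formulae at points accumulating at $\theta_0$ only says the argument of the outer $h_4$ equals the threshold $1/a$ there, and without analyticity (conditions \ref{condition_analytic}/\ref{condition_factorise} are \emph{not} hypotheses of Theorem \ref{regularity_thm}; they are only used at $b=b^*(a)$) nothing propagates along the orbit, let alone forces $\varphi_\infty\equiv\mu_4$ densely. The alternative route is equally unjustified: the new breakpoints of $\varphi_{n+1}$ form the level set $\{\theta:\,a\varphi_n(\theta-2\omega)-bg(\theta-2\omega)=1/a\}$, which for a merely $C^{1+\tau}$ forcing $g$ need not be finite, so "each iteration at most doubles the number of breakpoints" is not available. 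Note the paper never claims finiteness of the breakpoint set: it only establishes that $\T$ is covered by finitely many intervals on each of which $\varphi_\infty$ equals a fixed finite iterate $\varphi_{n_0+m}$ of the smooth seed $\varphi_0$, and reads piecewise $C^{1+\tau}$ regularity off these finite compositions. If you replace your backward per-point analysis by that forward argument (full-measure set $\bigcup_nT_{2\omega}^{-n}(B)$, continuity to get an interval, compactness and irrationality of $\omega$ to get a finite cover), and drop the finiteness-of-$\partial C$ claim, your proof closes and essentially coincides with the paper's.
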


It is natural to ask whether, for $b=b^*(a)$, the closure of the attracting invariant or two-periodic curve is dense in the region between itself and the repelling one. Recall the definition of the sets $\Lambda_i$ for $i=1,\dots,4$, in Theorem \ref{thm_compactinvregion}.
The next result answers positively to this question.
\begin{thm}\label{densegraph}
    Let $a$ satisfy Assumption  \eqref{assumptions_a} and $b=b^*(a)$.
    \begin{itemize}
        \item For $i=1,2,3$, the set $\Lambda_i$ has positive Lebesgue measure, it can not contain any non-empty open set and the closure of the semicontinuous invariant curve is equal to $\Lambda_i$.
        \item For $i=4$, the set $\Lambda_4$ has positive Lebesgue measure, it can not contain any non-empty open set and the union of the closure of  the two-periodic curves $\varphi_\infty$ and $\mathcal{F}_4(\varphi_\infty)$ is equal to $\Lambda_4$.
    \end{itemize}
\end{thm}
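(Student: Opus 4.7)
I describe the proof for system \eqref{dyn_sys_4}; the reasoning for $i=1,2,3$ is analogous (slightly simpler since $h_i$ is monotone increasing so there is no need to pass to $F^2$ and no shifted two-periodic graph).

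First I would identify the fibered structure of $\tilde\Lambda_4$: namely,
$$\tilde\Lambda_4=\{(x,\theta)\in\R\times\T:\mu_4(\theta)\le x\le\varphi_\infty(\theta)\}.$$
The plan is to show by induction that $F_4^{2n}(A_+)=\{\mu_4\le x\le\varphi_n\}$. The base case is the definition of $A_+$. For the induction step, since $h_4$ is decreasing, $F_4^2(\cdot,\theta)$ is continuous and strictly increasing in $x$; the invariance of $\mu_4$ and the definition $\varphi_{n+1}=\mathcal{F}_4^2(\varphi_n)$ send the endpoints $\mu_4(\theta)$ and $\varphi_n(\theta)$ to $\mu_4(\theta+2\omega)$ and $\varphi_{n+1}(\theta+2\omega)$ respectively, and the interval $[\mu_4(\theta),\varphi_n(\theta)]$ is mapped by IVT onto $[\mu_4(\theta+2\omega),\varphi_{n+1}(\theta+2\omega)]$. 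Intersecting the decreasing sequence $F_4^{2n}(A_+)$ gives $\tilde\Lambda_4=\{\mu_4\le x\le\inf_n\varphi_n\}=\{\mu_4\le x\le\varphi_\infty\}$, as $\varphi_n\searrow\varphi_\infty$ by Lemma \ref{montonicity_lemma}.

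Next I would prove the density statement $\overline{\mathrm{graph}(\varphi_\infty)}=\tilde\Lambda_4$. The inclusion $\subseteq$ is immediate since $\mathrm{graph}(\varphi_\infty)\subseteq\tilde\Lambda_4$ and $\tilde\Lambda_4$ is closed. For the reverse inclusion, fix $(x_0,\theta_0)$ with $\mu_4(\theta_0)\le x_0\le\varphi_\infty(\theta_0)$. The case $x_0=\varphi_\infty(\theta_0)$ is trivial. For $x_0=\mu_4(\theta_0)$, Theorem \ref{big_thm} provides a residual set $R\subseteq A$ on which $\varphi_\infty=\mu_4$; choosing $\theta_n\in R$ with $\theta_n\to\theta_0$ gives $\varphi_\infty(\theta_n)=\mu_4(\theta_n)\to\mu_4(\theta_0)=x_0$ by continuity of $\mu_4$. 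For intermediate values $x_0\in(\mu_4(\theta_0),\varphi_\infty(\theta_0))$, the idea is to exploit the wrinkling of the continuous curves $\varphi_n$ (Corollary \ref{char_corollary}): pick $\theta'_k\in R$ with $\theta'_k\to\theta_0$ and $\mu_4(\theta'_k)<x_0$; since $\varphi_n(\theta'_k)\to\mu_4(\theta'_k)<x_0$ while $\varphi_n(\theta_0)\to\varphi_\infty(\theta_0)>x_0$, one can choose $n_k\to\infty$ such that $\varphi_{n_k}(\theta'_k)<x_0<\varphi_{n_k}(\theta_0)$; the intermediate value theorem then yields $\theta^*_k$ between $\theta'_k$ and $\theta_0$ with $\varphi_{n_k}(\theta^*_k)=x_0$. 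By construction $\theta^*_k\to\theta_0$ and $\varphi_\infty(\theta^*_k)\le x_0$; the reverse bound $\varphi_\infty(\theta^*_k)\to x_0$ then follows from a diagonal extraction that keeps $\theta^*_k$ close to the wrinkles of $\varphi_{n_k}$ where the graph becomes almost vertical. This is the main obstacle: the intermediate value construction on the continuous approximants $\varphi_n$ controls $\varphi_n(\theta^*_k)$, but to transfer the control to $\varphi_\infty(\theta^*_k)$ one must choose $\theta^*_k$ on a steep portion of $\varphi_{n_k}$ so that the gap $\varphi_{n_k}(\theta^*_k)-\varphi_\infty(\theta^*_k)$ is forced to be small.

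The positive Lebesgue measure is then easy: by Step~1 the measure of $\tilde\Lambda_4$ is $\int_\T(\varphi_\infty(\theta)-\mu_4(\theta))\,d\theta$, and Theorem \ref{big_thm} tells us that $\{\varphi_\infty=\mu_4\}$ has zero one-dimensional Lebesgue measure, so the integrand is strictly positive almost everywhere. For the empty interior, suppose $B_\varepsilon(x_0,\theta_0)\subseteq\tilde\Lambda_4$. By Step~1 every $\theta\in(\theta_0-\varepsilon,\theta_0+\varepsilon)$ satisfies $\varphi_\infty(\theta)\ge x_0+\varepsilon$ and $\mu_4(\theta)\le x_0-\varepsilon$. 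Picking $\theta'\in R\cap(\theta_0-\varepsilon,\theta_0+\varepsilon)$ (dense by Theorem \ref{big_thm}) gives $x_0+\varepsilon\le\varphi_\infty(\theta')=\mu_4(\theta')\le x_0-\varepsilon$, contradicting $\varepsilon>0$.

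Finally, the case $i=4$ is completed by observing $\Lambda_4=\tilde\Lambda_4\cup F_4(\tilde\Lambda_4)$; continuity of $F_4$ and compactness give $F_4(\tilde\Lambda_4)=F_4(\overline{\mathrm{graph}(\varphi_\infty)})=\overline{\mathrm{graph}(\mathcal{F}_4(\varphi_\infty))}$, and since $F_4$ is piecewise-linear with nonzero slope $a$ outside the codimension-one set $\{x=1/a\}$, it preserves positive two-dimensional Lebesgue measure and maps sets with empty interior to sets with empty interior. The systems $i=1,2,3$ follow the same outline, working with $F_i$ directly (not $F_i^2$), replacing the inequality $\mu_i\le x\le\varphi_n$ by the analogous one from Proposition \ref{attract_prop}, and invoking the corresponding item of Lemma \ref{montonicity_lemma}.
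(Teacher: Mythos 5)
Your Step~2 for the intermediate points is where the proposal breaks down, and it is precisely the heart of the theorem. For $x_0\in(\mu_4(\theta_0),\varphi_\infty(\theta_0))$ your intermediate-value construction produces angles $\theta^*_k\to\theta_0$ with $\varphi_{n_k}(\theta^*_k)=x_0$, which only yields the one-sided bound $\varphi_\infty(\theta^*_k)\le x_0$: the points $(\theta^*_k,x_0)$ lie on the graphs of the continuous approximants, not near the graph of $\varphi_\infty$, and nothing in the construction controls the gap $\varphi_{n_k}(\theta^*_k)-\varphi_\infty(\theta^*_k)$, which a priori could stay bounded away from zero for every admissible choice of $\theta^*_k$ and $n_k$. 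The proposed fix (a ``diagonal extraction that keeps $\theta^*_k$ close to the wrinkles of $\varphi_{n_k}$ where the graph becomes almost vertical'') is exactly the assertion that would need a proof: steepness of $\varphi_{n_k}$ near $\theta^*_k$ by itself gives no lower bound on the value of the limit curve $\varphi_\infty$ at $\theta^*_k$, and you yourself flag this as ``the main obstacle''. As written, the density claim $\overline{\mathrm{graph}(\varphi_\infty)}\supseteq\{\mu_4<x<\varphi_\infty\}$ is therefore not established.

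The paper closes this step by a different, non-constructive argument that never tracks the approximants: take $z_0=(\theta_*,x_*)$ with $\mu(\theta_*)<x_*<\varphi_\infty(\theta_*)$, let $\Phi$ and $M$ denote the graphs of $\varphi_\infty$ and $\mu$, and suppose $d(z_0,\Phi)>0$; with $\delta=\min\{d(z_0,M),d(z_0,\Phi)\}>0$, every point $(\theta_1,x_1)\in B_\delta(z_0)$ must then satisfy $\mu(\theta_1)<x_1<\varphi_\infty(\theta_1)$. By Theorem \ref{big_thm} the coincidence set $A=\{\theta\mid\mu(\theta)=\varphi_\infty(\theta)\}$ is residual, hence dense, so there is $\theta_N\in A$ with $d(\theta_N,\theta_*)<\delta$; the point $(\theta_N,x_*)\in B_\delta(z_0)$ would then have to lie strictly between the two curves over an angle where they coincide, a contradiction, whence $d(z_0,\Phi)=0$. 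This uses only the continuity of $\mu$, the semicontinuity of $\varphi_\infty$ and the density of $A$, with no control of the wrinkling of $\varphi_n$. The rest of your proposal is sound and in places more explicit than the paper: the fiber description $\tilde\Lambda_4=\{\mu_4\le x\le\varphi_\infty\}$ (used implicitly in the paper), the Fubini computation of the measure, the empty-interior argument via density of $A$, and the transfer to $\mathcal{F}_4(\varphi_\infty)$ by applying $F_4$ all work (note only that the fiber maps of $F_4^2$ are weakly, not strictly, increasing, which does not affect the endpoint/IVT argument). Replacing your wrinkle-tracking step by the contradiction argument above would complete the proof.
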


Numerical computations of these systems suggest that, as $b\to b^*(a)$, the attracting invariant curves exhibit fractalization, see Section \ref{fractalization_section} for a precise definition. We write $L(\varphi)$ for the lowest Lipschitz constant of a Lipschitz curve $\varphi$, as detailed in Section \ref{fractalization_section}. If we take a positive $g$, then the fractalization of the invariant or two-periodic curves can be proved.
\begin{thm}\label{weak_frac_thm}
    Let $a$ satisfy Assumption \eqref{assumptions_a}. If $g(\theta)\geq0$ for all $\theta\in\T$, then, for the systems \eqref{dyn_sys}, \eqref{dyn_sys_2}, \eqref{dyn_sys_3} and \eqref{dyn_sys_4}, the family of curves $\{\varphi_\infty(b)\}_b$ fractalize as $b\to b^*(a)$, i.e.,
        \begin{align*}
            \limsup_{b\to b^*(a)} L(\varphi_\infty(b)|_I) = +\infty,
        \end{align*}
        for every interval $I\subset\T$.
\end{thm}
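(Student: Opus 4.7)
My plan is to argue by contradiction, combining the Arzel\`a--Ascoli theorem with the residual/zero-Lebesgue-measure structure of the collision set $A$ from Theorem \ref{big_thm}. I describe the argument for system \eqref{dyn_sys_4}; the other three families are handled by the same scheme.

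Suppose, towards a contradiction, that there are an interval $I\subset\T$ and a constant $M<+\infty$ such that $L(\varphi_\infty(b)|_I)\le M$ along a sequence $b_n\to b^*(a)^-$. Since every $\varphi_\infty(b_n)$ sits inside the compact invariant set $\Lambda_4$ of Theorem \ref{thm_compactinvregion}, the family $\{\varphi_\infty(b_n)|_I\}_n$ is uniformly bounded and $M$-equi-Lipschitz. By Arzel\`a--Ascoli, after passing to a subsequence we have $\varphi_\infty(b_n)|_I\to\psi$ uniformly on $I$, and $\psi$ is $M$-Lipschitz, hence continuous.

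To pin down $\psi$, observe that each iterate $\lambda_n(b,\theta)$ is jointly continuous in $(b,\theta)$: the functions $h_4$ and $g$ are continuous, $\mu_4(b,\theta)=b\,\boldsymbol{\mu}_4(a,\omega,\theta)$ depends linearly on $b$, and the two branches of the piecewise-linear map agree on the switching set. Lemma \ref{montonicity_lemma} gives $\lambda_{n+1}\le\lambda_n$, so $\lambda_\infty=\inf_n\lambda_n$ is upper semicontinuous on $\R\times\T$ as an infimum of continuous functions. For any $\theta_0\in A\cap I$ we then have
\[0\le\liminf_n\lambda_\infty(b_n,\theta_0)\le\limsup_n\lambda_\infty(b_n,\theta_0)\le\lambda_\infty(b^*(a),\theta_0)=0,\]
and the continuity of $\mu_4$ in $b$ yields $\varphi_\infty(b_n,\theta_0)\to\mu_4(b^*(a),\theta_0)$. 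Since $A$ is residual in $\T$ by Theorem \ref{big_thm}, $A\cap I$ is dense in $I$, and the continuity of both $\psi$ and $\mu_4(b^*(a),\cdot)$ forces $\psi\equiv\mu_4(b^*(a),\cdot)|_I$.

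To close the argument I use the hypothesis $g\ge0$ to exhibit a $\theta_0\in I$ with $\liminf_n\varphi_\infty(b_n,\theta_0)>\mu_4(b^*(a),\theta_0)=\psi(\theta_0)$, a contradiction with the uniform convergence. The mechanism combines two monotonicities. First, under $g\ge0$ the iterates $\varphi_n(b,\theta)$ are monotone in $b$ for every fixed $\theta$: for \eqref{dyn_sys_3} this is immediate because $h_3$ is monotone increasing and the term $bg$ is nondecreasing in $b$; similar inductions handle \eqref{dyn_sys}, \eqref{dyn_sys_2}, and \eqref{dyn_sys_4}, with \eqref{dyn_sys_4} relying on the fact that $h_4^2$ is monotone increasing. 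Second, the sequence $\{\varphi_n\}_n$ is monotone in $n$ (Lemma \ref{montonicity_lemma}), so $\varphi_\infty$ is semicontinuous on $\R\times\T$ as an extremum of continuous functions. Pairing the two monotonicities produces pointwise convergence $\varphi_\infty(b_n,\theta)\to\varphi_\infty(b^*(a),\theta)$ for every $\theta$. Choosing $\theta_0\in A^c\cap I$, which has full Lebesgue measure in $I$ by Theorem \ref{big_thm}, we obtain $\varphi_\infty(b^*(a),\theta_0)\ne\mu_4(b^*(a),\theta_0)$, giving the required strict inequality.

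The main obstacle I anticipate lies in verifying the $b$-monotonicity of $\varphi_n$ in the period-doubling system \eqref{dyn_sys_4}. Because $h_4$ itself is decreasing while $h_4^2$ is increasing, and the term $-bg$ appears twice per $\mathcal{F}_4^2$-iteration with signs that partially cancel, a direct induction requires careful book-keeping at the switching regions in \eqref{expression_lambda_4}, checking that these regions themselves vary monotonically in $b$ under $g\ge0$ so the induction does not break at the piecewise boundary. The other three systems are technically cleaner because the corresponding maps $h_i$ are already monotone increasing.
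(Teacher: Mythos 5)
Your argument is correct in outline and shares the paper's key ingredients (contradiction, Arzel\`a--Ascoli, the $b$-monotonicity of $\varphi_\infty$ under $g\ge0$, and the structure of the collision set $A$ from Theorem \ref{big_thm}), but it closes the contradiction by a genuinely different route. The paper first shows, using monotonicity in $b$ and a uniqueness-of-invariant-curve argument, that $\varphi_\infty(b,\cdot)\to\varphi_\infty(b^*(a),\cdot)$ pointwise; then, assuming bounded Lipschitz constants on $I$, it propagates the bound to all of $\T$ via the invariance equation and the irrational rotation, and applies Arzel\`a--Ascoli globally to contradict the noncontinuity of $\varphi_\infty(b^*(a),\cdot)$. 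You instead stay on $I$: you identify the uniform sublimit $\psi$ with $\mu(b^*(a),\cdot)|_I$ by combining joint upper semicontinuity of $\lambda_\infty=\inf_n\lambda_n$ in $(b,\theta)$ (each $\lambda_n$ is jointly continuous and the sequence is decreasing) with the density of the residual set $A$, and then contradict at a single angle $\theta_1\in A^c\cap I$ (nonempty since $A$ has zero measure), where the $b$-monotonicity gives $\varphi_\infty(b_n,\theta_1)\ge\varphi_\infty(b^*(a),\theta_1)>\mu(b^*(a),\theta_1)=\psi(\theta_1)$. This buys you two simplifications: you avoid the paper's Lipschitz-propagation step around the circle (the constants $K_1,K_2$ there require some care), and you do not need the full identification of the $b\to b^*(a)$ limit with $\varphi_\infty(b^*(a),\cdot)$ -- in fact your asserted ``pointwise convergence $\varphi_\infty(b_n,\theta)\to\varphi_\infty(b^*(a),\theta)$ for every $\theta$'' is stronger than what you use and can simply be dropped in favour of the one-sided inequality coming from monotonicity in $b$. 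Two small points: the uniform boundedness needed for Arzel\`a--Ascoli should be taken from $\mu(b,\cdot)\le\varphi_\infty(b,\cdot)\le\varphi_0(b,\cdot)$ with $b\in(0,b^*(a)]$ rather than from $\Lambda_4$, which itself depends on $b$; and the $b$-monotonicity you flag as the main obstacle for \eqref{dyn_sys_4} is exactly Proposition \ref{monotonic_sequence_limit} of the paper, so you may cite it rather than reprove it -- your instinct that the induction for \eqref{dyn_sys_4} needs bookkeeping at the switching regions is sound, since $h_4$ is decreasing and the forcing enters twice per $\mathcal{F}_4^2$-step with competing effects, and a naive order-preservation argument does not close by itself.
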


Under certain assumptions, the following result characterises when an invariant curve of a quasiperiodically forced map is noncontinuous.
\begin{thm}\label{mon_characterization}
    Consider a quasiperiodically forced map defined by
    \begin{align*}
        \begin{cases}
            \Bar{x} = h(x) - b g(\theta),\\
            \Bar{\theta} = \theta + \omega \mod2\pi,
        \end{cases}
    \end{align*}
    where $g\colon\T\to\R$ is continuous, $h\colon\R\to\R$ is continuous and monotone increasing and $\omega\not\in2\pi\Q$. 
    Assume that there exists a continuous curve $f_0\colon\T\to\R$ such that:
    \begin{itemize}
        \item Either $f_\infty(\theta) \leq f_0(\theta)$ or $f_\infty(\theta)\geq f_0(\theta)$ for all $\theta\in\T$.
        \item The sequence defined by the fixed point map of the system $$f_n(\theta):=h\circ f_{n-1}(\theta-\omega)- b g(\theta-\omega)$$ has a pointwise limit $f_\infty$.
    \end{itemize}
    Define the following sequence of curves:
    \begin{align*}
        f_m^{\downarrow}(\theta)&:=\inf_{k\leq m}f_k(\theta),\\
        f_m^{\uparrow}(\theta)&:=\sup_{k\leq m}f_k(\theta).
    \end{align*}
    
    Then $f_\infty$ is invariant and $f_\infty$ is continuous if and only if either $\{f_m^{\downarrow}\}_m$ or $\{f_m^{\uparrow}\}_m$ is uniformly convergent. In particular,
    \begin{itemize}
        \item If $f_\infty\leq f_0$, then $f_\infty$ is continuous if and only if $\{f_m^{\downarrow}\}_m$ is uniformly convergent.
        \item If $f_\infty\geq f_0$, then $f_\infty$ is continuous if and only if $\{f_m^{\uparrow}\}_m$ is uniformly convergent.
    \end{itemize}
\end{thm}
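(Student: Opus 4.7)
The plan is to exploit that the operator $\mathcal{F}(f)(\theta) = h(f(\theta-\omega)) - bg(\theta-\omega)$ is order-preserving (because $h$ is monotone increasing), that $\T$ is compact, that each $f_n$ is continuous (by induction from $f_0$ continuous and $h,g$ continuous), and to combine these ingredients with Dini's theorem.

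\textbf{Invariance.} Since $h$ and $g$ are continuous and $f_n\to f_\infty$ pointwise, for every $\theta\in\T$ we have $h(f_n(\theta-\omega))\to h(f_\infty(\theta-\omega))$, hence $f_{n+1}(\theta) = h(f_n(\theta-\omega)) - bg(\theta-\omega)\to \mathcal{F}(f_\infty)(\theta)$. Since the same sequence $f_{n+1}(\theta)$ also converges to $f_\infty(\theta)$, the equality $\mathcal{F}(f_\infty)=f_\infty$ follows.

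\textbf{Identifying the pointwise limit of the auxiliary sequences.} Treat the case $f_\infty\leq f_0$; the other case is symmetric. Since $\mathcal{F}$ preserves order, from $f_\infty\leq f_0$ and $\mathcal{F}(f_\infty)=f_\infty$ we get by induction that $f_\infty\leq f_n$ for every $n\geq 0$. Therefore $f_m^{\downarrow}(\theta)=\min_{k\leq m} f_k(\theta) \geq f_\infty(\theta)$. On the other hand, $f_m^{\downarrow}(\theta)\leq f_n(\theta)$ for all $n\leq m$, so letting $m\to\infty$ and using $f_n\to f_\infty$ pointwise yields $\inf_k f_k(\theta) = f_\infty(\theta)$. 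Since $\{f_m^{\downarrow}\}_m$ is a non-increasing sequence in $m$, we conclude that $f_m^{\downarrow}\to f_\infty$ pointwise on $\T$. Moreover, each $f_m^{\downarrow}$ is the minimum of finitely many continuous functions, and is therefore continuous.

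\textbf{Applying Dini and concluding.} At this point the characterization is an immediate consequence of Dini's theorem: on the compact space $\T$, a monotone sequence of continuous functions converging pointwise to a continuous limit converges uniformly. So if $f_\infty$ is continuous, $f_m^{\downarrow}\to f_\infty$ uniformly. Conversely, if $\{f_m^{\downarrow}\}_m$ converges uniformly, its limit (which equals $f_\infty$ pointwise by the previous paragraph) is the uniform limit of continuous functions and hence continuous. For the case $f_\infty\geq f_0$, the same argument applies with the roles reversed: one shows $f_n\leq f_\infty$ for all $n$, deduces $\sup_k f_k = f_\infty$, notes $\{f_m^{\uparrow}\}_m$ is a non-decreasing sequence of continuous functions, and applies Dini again. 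The only subtle step, and the one to handle carefully, is the step where one must verify that $\inf_k f_k = f_\infty$ (resp.\ $\sup_k f_k=f_\infty$); without the hypothesis $f_\infty\leq f_0$ (resp.\ $f_\infty\geq f_0$) the monotonicity of $\mathcal{F}$ does not suffice to locate $f_\infty$ relative to the iterates, so the ordering assumption is essential and must be invoked precisely at that point.
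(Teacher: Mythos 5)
Your proposal is correct and follows essentially the same route as the paper: invariance by passing to the limit in the recursion, showing $f_m^{\downarrow}\to f_\infty$ pointwise by squeezing, noting each $f_m^{\downarrow}$ is continuous and the sequence is monotone, and concluding with Dini's theorem (with the symmetric argument for $f_m^{\uparrow}$). The only difference is that you spell out, via the order-preserving property of the iteration and induction, why $f_\infty\leq f_n$ for all $n$, a step the paper leaves implicit.
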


As a consequence, we obtain the following result for our piecewise-linear systems.
\begin{corollary}\label{char_corollary}
    Let $a$ satisfy Assumption \eqref{assumptions_a} and $b=b^*(a)$.   
    Then, for the systems \eqref{dyn_sys}, \eqref{dyn_sys_2}, \eqref{dyn_sys_3} and \eqref{dyn_sys_4}, the sequence $\{\varphi_n\}_n$ is not uniformly convergent and satisfies that for every interval $I\subset\T$
        \begin{align*}
            \limsup_n L(\varphi_n|_I)=+\infty.
        \end{align*}
\end{corollary}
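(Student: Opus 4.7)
Both conclusions will be derived from the same obstruction: any uniform compactness of the sequence $\{\varphi_n\}$ on an interval would force $\varphi_\infty$ to be continuous there, which clashes with Theorem \ref{big_thm}.

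The first step is to show that $\varphi_\infty$ cannot be continuous. By Theorem \ref{big_thm}, the set $A=\{\theta\in\T\mid\lambda_\infty(\theta)=0\}=\{\theta\in\T\mid\varphi_\infty(\theta)=\mu_i(\theta)\}$ is residual (hence dense) but has zero Lebesgue measure. If $\varphi_\infty$ were continuous, then density of $A$ together with continuity of $\mu_i$ would force $\varphi_\infty\equiv\mu_i$ on $\T$, giving $A=\T$ and contradicting the zero-measure assertion.

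For the non-uniform-convergence claim I would invoke Theorem \ref{mon_characterization}. By Lemma \ref{montonicity_lemma} the sequence $\{\varphi_n\}$ is monotone: increasing for \eqref{dyn_sys_2} and decreasing otherwise. For any monotone sequence the auxiliary sequences $f_m^{\downarrow}$ and $f_m^{\uparrow}$ of the theorem coincide with $\{\varphi_n\}$, so Theorem \ref{mon_characterization} reduces to the equivalence: uniform convergence of $\{\varphi_n\}$ is equivalent to continuity of $\varphi_\infty$. Combined with the preceding step, $\{\varphi_n\}$ cannot converge uniformly. For system \eqref{dyn_sys_4} one works with $\mathcal{F}_4^2$ in place of $\mathcal{F}_4$; the Dini-type reduction underpinning Theorem \ref{mon_characterization} still applies because the sequence is monotone, each $\varphi_n$ is continuous, and $\T$ is compact.

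For the Lipschitz blow-up I would argue by contradiction. Suppose $\limsup_n L(\varphi_n|_I)<+\infty$ for some interval $I\subset\T$; then along a subsequence $\{\varphi_{n_k}|_I\}$ the Lipschitz constants are bounded by a common $M$. By Proposition \ref{attract_prop} the family $\{\varphi_n\}$ is uniformly bounded (squeezed between $\mu_i$ and the starting curve $\varphi_0$), so Arzel\`a--Ascoli yields a sub-subsequence converging uniformly on $I$ to a continuous function, which must coincide with the pointwise limit $\varphi_\infty|_I$. Hence $\varphi_\infty|_I$ is continuous. Since $A\cap I$ is residual, hence dense, in $I$ and both $\varphi_\infty|_I$ and $\mu_i|_I$ are continuous and agree on $A\cap I$, we conclude $\varphi_\infty|_I\equiv\mu_i|_I$, i.e.\ $I\subseteq A$. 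Since $I$ has positive Lebesgue measure, this contradicts Theorem \ref{big_thm}.

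The main point requiring care is the adaptation to system \eqref{dyn_sys_4}: one must recast the analysis in terms of $\mathcal{F}_4^2$ and verify that the monotonicity and boundedness of $\{\varphi_n\}$ are preserved after composition with the monotone-increasing map $h_4\circ h_4$, so that both the Dini-type step and the Arzel\`a--Ascoli step go through. Once this is settled, the argument is uniform across the four families.
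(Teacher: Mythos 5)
Your proposal is correct and follows essentially the same route as the paper: noncontinuity of $\varphi_\infty$ at $b=b^*(a)$ from Theorem \ref{big_thm}, Dini's theorem (the content of Theorem \ref{mon_characterization}) for the failure of uniform convergence of the monotone sequence, and an Arzel\`a--Ascoli argument for the blow-up of the Lipschitz constants. The only difference is that you spell out the localization to an arbitrary interval $I$ (boundedness of $L(\varphi_{n_k}|_I)$ would make $\varphi_\infty|_I$ continuous, hence equal to $\mu_i$ on $I$ by density of $A\cap I$, contradicting that $A$ has zero measure), a step the paper only asserts in the remark preceding the proof of Theorem \ref{mon_characterization}.
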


Finally, we can prove that these systems have only an attractor when $\abs{a}<1$.
\begin{thm}\label{thm_uniform_contraction_case}
    Let $\abs{a}<1$ and $b\in\R$. Then $F_i$ has a unique Lipschitz invariant curve for each $i\in\{1,2,3,4\}$.
\end{thm}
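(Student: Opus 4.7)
The plan is to apply the Banach Fixed Point Theorem to the operator $\mathcal{F}_i$ on the complete metric space $(C(\T),\|\cdot\|_\infty)$, obtaining first a unique continuous invariant curve, and then to upgrade its regularity to Lipschitz by restricting the iteration to a suitable closed invariant subset. The crucial observation is that each piecewise-linear map $h_i$ has all slopes in $\{0,a\}$ (with an extra flat piece in between for $h_2$), and is therefore globally Lipschitz with constant at most $|a|<1$.

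For the first step, for any $\varphi,\psi\in C(\T)$ and $\theta\in\T$, the defining expression $\mathcal{F}_i(\varphi)(\theta)=h_i(\varphi(\theta-\omega))\pm b\,g(\theta-\omega)$ gives
$$\|\mathcal{F}_i(\varphi)-\mathcal{F}_i(\psi)\|_\infty \leq |a|\,\|\varphi-\psi\|_\infty.$$
Since $|a|<1$, $\mathcal{F}_i$ is a contraction on the Banach space $(C(\T),\|\cdot\|_\infty)$, and the Banach Fixed Point Theorem produces a unique continuous fixed point $\varphi^*$, i.e., a unique continuous invariant curve of $F_i$.

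For the second step, I would use that $g\in C^{1+\tau}(\T)$ is Lipschitz with some constant $L_g$. Setting $L^*:=|b|L_g/(1-|a|)$, the set
$$Y:=\{\varphi\in C(\T):\mathrm{Lip}(\varphi)\leq L^*\}$$
is closed in $(C(\T),\|\cdot\|_\infty)$ (uniform limits preserve Lipschitz bounds). A direct estimate of $|\mathcal{F}_i(\varphi)(\theta_1)-\mathcal{F}_i(\varphi)(\theta_2)|$ yields $\mathrm{Lip}(\mathcal{F}_i(\varphi))\leq |a|\,\mathrm{Lip}(\varphi)+|b|L_g$, and the choice of $L^*$ is precisely the fixed point of the affine map $L\mapsto |a|L+|b|L_g$, so $\mathcal{F}_i(Y)\subseteq Y$. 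Applying the Banach Fixed Point Theorem on the complete metric space $(Y,\|\cdot\|_\infty)$, its fixed point must coincide with $\varphi^*$, showing $\varphi^*\in Y$. Uniqueness within the Lipschitz class is inherited from uniqueness in $C(\T)$.

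I do not anticipate a genuine obstacle; the only technical point is verifying the global Lipschitz bound $|h_i(x)-h_i(y)|\leq |a|\,|x-y|$ by case analysis on which linear or constant pieces contain $x$ and $y$. This reduces in each of the four cases to the fact that the constant pieces are joined continuously to the slope-$a$ piece, so no jumps arise at the breakpoints and the worst slope encountered along any segment is $|a|$. Once this uniform bound is in hand, the two-step Banach argument applies verbatim to all four systems and for every $b\in\R$.
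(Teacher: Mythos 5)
Your argument is correct, and it takes a genuinely different (and in fact safer) route than the paper. The paper applies the Banach Fixed Point Theorem once, on the Banach space $\text{Lip}(\T,\R)$ with the norm $\norm{\cdot}_L=\norm{\cdot}_\infty+L(\cdot)$, asserting in one line that $\norm{\mathcal{F}_i(\varphi)-\mathcal{F}_i(\psi)}_L\leq\abs{a}\,\norm{\varphi-\psi}_L$, i.e.\ that $\mathcal{F}_i$ contracts the full Lipschitz norm. You instead contract in $(C(\T),\norm{\cdot}_\infty)$ and then upgrade the regularity of the fixed point by checking that the sup-norm-closed set $Y=\{\mathrm{Lip}(\varphi)\leq \abs{b}L_g/(1-\abs{a})\}$ is $\mathcal{F}_i$-invariant, via $\mathrm{Lip}(\mathcal{F}_i(\varphi))\leq\abs{a}\,\mathrm{Lip}(\varphi)+\abs{b}L_g$. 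The difference matters: for a piecewise-linear $h_i$ with a kink, the seminorm estimate $L(h_i\circ\varphi-h_i\circ\psi)\leq\abs{a}\,L(\varphi-\psi)$ implicit in the paper's display is not true in general (take $\varphi=\psi+c$ with the range of $\psi$ crossing the kink: the right-hand side vanishes while the left-hand side is positive, and one checks the sup-norm term cannot absorb the excess), so the paper's one-line contraction estimate needs exactly the kind of repair your two-step scheme provides automatically. What the paper's formulation buys, when justified, is brevity and a fixed point living directly in the Lipschitz space; what yours buys is that every inequality used — the global bound $\abs{h_i(x)-h_i(y)}\leq\abs{a}\abs{x-y}$, the sup-norm contraction, and the affine Lipschitz-constant recursion with fixed point $L^*$ — is elementary, holds verbatim for all four systems and every $b\in\R$, and yields uniqueness in the Lipschitz class immediately from uniqueness in $C(\T)$.
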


\section{Proofs of the main results and additional statements}\label{proof_sect}
Each $\mu_i$ plays a very similar role on the proofs for each system. Therefore, unless there is a possible confusion, we will denote by $\mu$ any of the invariant curves $\mu_i$.
\subsection{A noncontinuous attractor}\label{section_b*}
In this section, we assume that $b=b^*(a)$.
The main objective in this section is to prove that the curve $\varphi_\infty$ is not continuous. This is done by proving that the set of angles where $\lambda_\infty$ is zero is a residual set of zero measure. For all the systems, we need to control the absolute extreme points of the invariant curve $\mu$. From Lemma \ref{lemma_distance}, the set $Z_{\lambda_0}=\{\theta\in\T\mid\lambda_0(\theta)=0\}$ is a subset of the set of absolute extreme points of $\mu$. We state two conditions (\ref{condition_analytic} and \ref{condition_factorise}) that grant that $Z_{\lambda_0}$ is a finite set of points.
\begin{itemize}
    \item Assuming \ref{condition_analytic}, we have that $\lambda_0$ is analytic, see Remark \ref{remarks_curves}\ref{regularity_remark}. Then the zero set $Z_{\lambda_0}$ has no accumulation points. Since for every $\theta\in\T$ we have that $\lambda_0(\theta)\geq\lambda_\infty(\theta)\geq0$, every zero has a finite even order. Hence, since $\T$ is compact, the set $Z_{\lambda_0}$ is equal to a finite set of points. 

    \item Assuming \ref{condition_factorise}, we have that $Z_{\lambda_0}$ has no accumulation points. Hence, as $\T$ is compact, the set $Z_{\lambda_0}$ is equal to a finite set of points. 
\end{itemize}
In any of these two cases, $Z_{\lambda_0}$ has zero measure. Therefore, for all $n$ the functions $\log\circ\lambda_n$ are integrable. The integrability of $\log\circ\lambda_n$ is the key to obtain the non-continuity of the semicontinuous function $\varphi_\infty$. We give two interesting examples for the system \eqref{dyn_sys}:
\begin{enumerate}
    \item\label{example_finite} We require condition \ref{condition_factorise}. Given a finite sequence $\{\theta_n\}_{n=0}^N\subset\T$. Let $\boldsymbol\mu_1$ be in $C^\infty(\T)$ such that $\boldsymbol\mu_1(\theta_n)=1$ for $n=0,\dots,N$, and that $\boldsymbol\mu_1(\theta)<1$ if $\theta\not=\theta_n$ for $n=0,\dots,N$. With this $\mu_1$ we have a finite set of points such that for a value of the parameter $b$ we have that $\mu_1(\theta_n)=\frac{\pi}{2a}$ for $n=0,\dots,N$.
    \item\label{example_pos_meas} Given an $\varepsilon>0$ small enough and an interior point $\theta_0\in\T$, consider the open interval $I_\varepsilon = (\theta_0,\theta_0+\varepsilon)$ of $\T$ of length $\varepsilon$.  Let $\boldsymbol\mu_1$ be in $C^\infty(\T)$ such that $\boldsymbol\mu_1(\theta)=1$ for any angle $\theta\in I_\varepsilon$, and $\boldsymbol\mu_1(\theta)<1$ for any angle $\theta\in \T\smallsetminus I_\varepsilon$. With this $\mu_1$ we have an open set of points such that for a value of the parameter $b$ we will have that $\mu_1(\theta)=\frac{\pi}{2a}$ for any angle $\theta\in I_\varepsilon$.
\end{enumerate}
Note that these examples can be reproduced in the other systems changing minor details of the functions.
Observe that, in the example \ref{example_finite} with $b=b^*(a)$, we have that $Z_{\lambda_0}$ has zero measure. In the example \ref{example_pos_meas}, with $b=b^*(a)$, $Z_{\lambda_0}$ has measure $\varepsilon$. In this case, for $\theta\in I_\varepsilon$, we have that $\lambda_0(\theta)=0$. Hence, for $\theta\in I_\varepsilon$, we have that $\lambda_\infty(\theta)=0$. Using the invariance and the compactness of $\T$, we obtain that $\varphi_\infty(\theta)=\mu(\theta)$ for all $\theta\in\T$. 

This situation can be even more extreme. For example, assume that $\boldsymbol\mu_1(\theta)=1$ for a residual set of small positive measure $I_0$, and for the rest of angles $\boldsymbol\mu_1(\theta)<1$. Therefore, for a value of the parameter $b$, the curves $\varphi_\infty$ and $\mu_1$ coincide on $I_0$. The set $I_0$ can be constructed from the complementary of a fat Cantor set. 

From this reasoning, to ensure that $\log\circ\lambda_n$ is integrable, we need to guarantee that one of the conditions \ref{condition_analytic} or \ref{condition_factorise} is satisfied.
For simplicity of the argument, we assume that $\lambda_0$ has a unique zero. After the following lemma, we will see that indeed having more than one zero of $\lambda_0$ does not affect the argument.

\begin{lemma}\label{zeros_lambda}
    Let $a$ satisfy Assumption \eqref{assumptions_a} and $b = b^*(a)$. The curve $\lambda_n$ has a finite set of of zeros. Concretely, let $\theta_0$ be the only zero of $\lambda_0$. Then we have the following:
    \begin{itemize}
        \item For the systems \eqref{dyn_sys}, \eqref{dyn_sys_2} and \eqref{dyn_sys_3}, $\lambda_n$ has exactly $n+1$ zeros $\theta_0,\theta_0+\omega,\dots,\theta_0+n\omega$.
        \item For the system \eqref{dyn_sys_4}, $\lambda_n$ has exactly $n+1$ zeros $\theta_0,\theta_0+2\omega,\dots,\theta_0+2n\omega$.
    \end{itemize}
\end{lemma}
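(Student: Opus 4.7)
The plan is a straightforward induction on $n$. The base case $n=0$ is the hypothesis. For the inductive step, I would use the explicit piecewise expressions for $\lambda_{n+1}$ displayed just before the lemma (equation \eqref{expression_lambda_4} for system \eqref{dyn_sys_4} and the analogous formulas for systems \eqref{dyn_sys}, \eqref{dyn_sys_2} and \eqref{dyn_sys_3}). The proof reduces to a careful case analysis based on which branch of the piecewise definition is active.

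The first key observation is that in each system the value produced by the ``first branch'' of the recurrence is precisely $\lambda_0(\theta)$ (up to an additive constant that vanishes at $b=b^*(a)$). For system \eqref{dyn_sys_4}, for instance, the first branch gives $1-a\mu_4(\theta-\omega)$, which is exactly $\lambda_0(\theta)$; and $\lambda_0$ vanishes only at $\theta_0$. The ``second branch'' produces a scalar multiple of $\lambda_n(\theta-2\omega)$ (a factor $a^2$ in system \eqref{dyn_sys_4}, a factor $a$ in the other three systems), which vanishes iff $\theta-2\omega$ (resp. $\theta-\omega$) is a zero of $\lambda_n$. Combining the two observations, the zero set of $\lambda_{n+1}$ is contained in $\{\theta_0\}\cup(\{\text{zeros of }\lambda_n\}+2\omega)$ for system \eqref{dyn_sys_4} (and similarly with $+\omega$ for the others), giving at most $n+2$ points by the inductive hypothesis and the irrationality of $\omega$.

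It then remains to show that every candidate zero is actually attained, which amounts to checking which branch is active. At $\theta=\theta_0$, I would substitute $\mu_4(\theta_0-\omega)=1/a$ (a consequence of $\lambda_0(\theta_0)=0$ together with the corollary following Lemma~\ref{lemma_distance}) into the first-branch condition; since $a<0$ and $\lambda_n\ge 0$, the inequality $1/a+a\lambda_n(\theta_0-2\omega)\le 1/a$ is automatic, so the first branch applies and $\lambda_{n+1}(\theta_0)=0$. At $\theta=\theta_0+2(k+1)\omega$ for $0\le k\le n$, the inductive hypothesis gives $\lambda_n(\theta-2\omega)=0$, so the branch condition reduces to $\mu_4(\theta-\omega)\le 1/a$; but $1/a$ is the unique minimum of $\mu_4$, attained only at $\theta_0-\omega$, and $\theta-\omega\neq \theta_0-\omega$ by irrationality of $\omega$, so the inequality fails strictly and the second branch applies, yielding $\lambda_{n+1}(\theta)=a^2\lambda_n(\theta-2\omega)=0$.

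The main obstacle will be the consistent bookkeeping across the four systems, together with keeping track of the sign of $a$ (system \eqref{dyn_sys_4} has $a<-1$, the others have $a>1$). For system \eqref{dyn_sys_2} the first branch gives $a(\mu_2(\theta-\omega)-\delta)$, which vanishes exactly at the unique angle where $\mu_2=\delta$; for systems \eqref{dyn_sys} and \eqref{dyn_sys_3} the analogous extrema are $\pi/(2a)$ and $-1/a$ respectively. In every case, the uniqueness of the extremum of $\mu_i$ guaranteed by the hypothesis that $\lambda_0$ has a single zero, combined with the irrationality of $\omega$, produces exactly $n+2$ distinct zeros of $\lambda_{n+1}$, closing the induction.
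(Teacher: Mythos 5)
Your proposal is correct and takes essentially the same route as the paper: induction on $n$ via the piecewise recurrence \eqref{expression_lambda_4} (and its analogues), with the first branch equal to $\lambda_0$ so the zero at $\theta_0$ persists, the second branch propagating zeros forward by $2\omega$ (resp.\ $\omega$), and the branch dichotomy ruling out any extra zeros. The only cosmetic difference is that the paper proves $\lambda_n(\theta_0)=0$ by iterating $F_4^2$ and using the monotonicity of $h_4$, while you substitute $\mu_4(\theta_0-\omega)=1/a$ directly into the branch condition; this is the same computation.
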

\begin{proof}
    Consider the system \eqref{dyn_sys_4}.
    Given $n\geq 1$, we see that
    \begin{align*}
        F_{4}^{2(n-1)}(\mu(\theta_0-2n\omega),\theta_0-2n\omega) &= (\mu(\theta_0-2\omega),\theta_0-2\omega),\\
        F_{4}^{2(n-1)}(\varphi_0(\theta_0-2n\omega),\theta_0-2n\omega) &= (\varphi_{n-1}(\theta_0-2\omega),\theta_0-2\omega).
    \end{align*}
    Since $\mu(\theta_0-\omega)=\frac{1}{a}$ and $\mu(\theta_0-2\omega)\leq \varphi_{n-1}(\theta_0-2\omega)$,
    \begin{align*}
        h_4(\varphi_{n-1}(\theta_0-2\omega))-b g(\theta)\leq h_4(\mu(\theta_0-2\omega))-b g(\theta) = \mu(\theta_0-\omega) \leq \frac{1}{a}.
    \end{align*}
    Therefore, we obtain that
    \begin{align*}
        F_{4}^2(\mu(\theta_0-2\omega),\theta_0-2\omega) = F_{4}^2(\varphi_{n-1}(\theta_0-2\omega),\theta_0-2\omega).
    \end{align*}
    As a consequence, we obtain that $\lambda_n(\theta_0)=\varphi_n(\theta_0)-\mu(\theta_0)=0$. If $\theta$ is a zero of $\lambda_n$, from \eqref{expression_lambda_4} we have that
    \begin{align*}
        \lambda_{n+1}(\theta+2\omega) = 
        \begin{cases}
            1 - a\mu(\theta+\omega) &\text{ if }a\mu(\theta) - b g(\theta)\leq\frac{1}{a},\\
            a^2\lambda_n(\theta) &\text{ if }a\mu(\theta) - b g(\theta)>\frac{1}{a}.
        \end{cases}
    \end{align*}
    Since $a\mu(\theta) - b g(\theta) = \mu(\theta+\omega)\geq\frac{1}{a}$, $\lambda_{n+1}(\theta+2\omega)=a^2\lambda_n(\theta)$. Then $\theta+2\omega$ is a zero of $\lambda_{n+1}$. 
    If we consider the case where $\theta$ is the minimum of $\mu$, then $\theta=\theta_0-\omega$ so
    \begin{align*}
        \lambda_{n+1}(\theta+\omega) = 
        \begin{cases}
            1 - a\mu(\theta) &\text{ if }a\mu(\theta-\omega) - b g(\theta-\omega)\leq\frac{1}{a},\\
            a^2\lambda_n(\theta-\omega) &\text{ if }a\mu(\theta-\omega) - b g(\theta-\omega)>\frac{1}{a}.
        \end{cases}
    \end{align*}
    Since $a\mu(\theta-\omega) - b g(\theta-\omega) = \mu(\theta) = \frac{1}{a}$, $\lambda_{n+1}(\theta+\omega) = 1 - a\mu(\theta) = 0$.
    
    The uniqueness follows from induction. For $n=0$ this is true, so we assume that $\lambda_{n+1}$ has an extra zero $\theta_1$. From \eqref{expression_lambda_4} there are two cases,
    \begin{enumerate}
        \item $1 - a\mu(\theta+\omega)=0$.
        We have that $\mu(\theta_1-\omega)=\frac{1}{a}$ so $\theta_1=\theta_0+2\omega$.
        
        \item $a^2\lambda_n(\theta)=0$.
        In this case $\lambda_n$ has an extra zero, but this is contradictory with the induction hypothesis which is that $\lambda_{n}$ has no extra zero.
    \end{enumerate}
    A similar argument is valid for the systems \eqref{dyn_sys}, \eqref{dyn_sys_2} and \eqref{dyn_sys_3}. For these systems you have to consider the one-iteration map instead of the two-iteration map.
\end{proof}
We obtained that $Z=\{\theta\in\T\mid\lambda_n(\theta)=0\text{ for any }n\geq0\}$ has zero measure. We will use this to ensure the integrability of $\log\circ\lambda_n$. The finite union of zero measure sets is of zero measure. Therefore, having more than one zero of $\lambda_0$ does not affect the integrability of $\log\circ\lambda_n$, for all $n\geq0$.
Recall that $b=b^*(a)$ and $\theta_0$ is the unique angle such that $\lambda_0(\theta_0)=0$. When $\lambda_n$ is small, the curve $\varphi_n$ is near the invariant curve $\mu$. Therefore, we have that
\begin{itemize}
    \item For the systems \eqref{dyn_sys}, \eqref{dyn_sys_2} and \eqref{dyn_sys_3}, the sequence $\{\lambda_n\}_n$ verifies that $\lambda_{n+1}(\theta+\omega)=a\lambda_n(\theta)$.
    \item For the system \eqref{dyn_sys_4}, the sequence $\{\lambda_n\}_n$ verifies that $\lambda_{n+1}(\theta+\omega)=a^2\lambda_n(\theta)$.
\end{itemize}
This justifies the following definition.
\begin{definition}\label{definition_psi}
    Let $a$ satisfy Assumption \eqref{assumptions_a}. We define the following sequence of functions $\{\psi_n\}_n$.
    \begin{itemize}
        \item For the systems \eqref{dyn_sys}, \eqref{dyn_sys_2} and \eqref{dyn_sys_3} the sequence is defined by
        \begin{align*}
            \psi_n(\theta):=
            \begin{cases}
                \frac{\lambda_{n+1}(\theta+\omega)}{\lambda_n(\theta)} &\text{ if }\lambda_n(\theta)\not=0,\\
                a &\text{ if }\lambda_n(\theta)=0.
            \end{cases}
        \end{align*}
        \item For the system \eqref{dyn_sys_4} the sequence is defined by
        \begin{align}\label{psi_def}
            \psi_n(\theta):=
            \begin{cases}
                \frac{\lambda_{n+1}(\theta+2\omega)}{\lambda_n(\theta)} &\text{ if }\lambda_n(\theta)\not=0,\\
                a^2 &\text{ if }\lambda_n(\theta)=0.
            \end{cases}
        \end{align}
    \end{itemize}
\end{definition}

We have to study how the functions $\psi_n$ behave. First, we show this sequence is bounded and monotone.
\begin{lemma}\label{integ_psy}
    Let $a$ satisfy Assumption \eqref{assumptions_a}. Then we have that $0\leq \psi_{n-1}(\theta)\leq \psi_{n}(\theta)$, for all $n\geq1$ and for all $\theta\in\T$. Moreover,
    \begin{itemize}
        \item For the systems \eqref{dyn_sys}, \eqref{dyn_sys_2} and \eqref{dyn_sys_3}, we have that $\psi_{n}(\theta)\leq a$, for all $n\geq0$ and for all $\theta\in\T$.
        \item For system \eqref{dyn_sys_4}, we have that $\psi_{n}(\theta)\leq a^2$, for all $n\geq0$ and for all $\theta\in\T$.
    \end{itemize}
\end{lemma}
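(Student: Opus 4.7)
The plan is to focus on system \eqref{dyn_sys_4} (the remaining three are strictly easier, involving a single shift of $\omega$ and a positive slope $a$, and I would adapt the argument at the end). Starting from the piecewise formula \eqref{expression_lambda_4}, I shift by $2\omega$ and use the functional equation $\mu_4(\theta+\omega)=a\mu_4(\theta)-bg(\theta)$ to rewrite the branch condition. With $a<-1$, the capped-branch condition $a[\mu_4(\theta)+\lambda_n(\theta)]-bg(\theta)\leq 1/a$ reduces to $a\lambda_n(\theta)\leq 1/a-\mu_4(\theta+\omega)$, and multiplying by $a<0$ flips it to $a^2\lambda_n(\theta)\geq 1-a\mu_4(\theta+\omega)$. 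The key observation is that, at $b=b^*(a)$, the curve $\mu_4$ lies on the linear branch $\{x>1/a\}$ of $h_4$ with equality only at its minimum; hence $\mu_4(\theta+\omega)\geq 1/a$ for every $\theta$, which after multiplication by $a<0$ gives $1-a\mu_4(\theta+\omega)\geq 0$.

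With this setup, define the capped set $I_n:=\{\theta\in\T : a^2\lambda_n(\theta)\geq 1-a\mu_4(\theta+\omega)\}$. Since $\{\lambda_n\}_n$ is monotone decreasing (Lemma \ref{montonicity_lemma}) and the right-hand side is independent of $n$, $\{I_n\}_n$ is a decreasing chain. I then verify the bounds on $\psi_n$ directly: on $\T\setminus I_n$ the linear branch of \eqref{expression_lambda_4} gives $\psi_n(\theta)=a^2$; on $I_n$ with $\lambda_n(\theta)>0$ the capped branch gives $\psi_n(\theta)=(1-a\mu_4(\theta+\omega))/\lambda_n(\theta)$, which is non-negative because both numerator and denominator are, and bounded above by $a^2$ exactly by the defining inequality of $I_n$. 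At zeros of $\lambda_n$ (a finite set by Lemma \ref{zeros_lambda}) the value $a^2$ is assigned by Definition \ref{definition_psi}, so $0\leq\psi_n\leq a^2$ throughout.

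For monotonicity $\psi_{n-1}\leq\psi_n$, I carry out a three-way case analysis on the position of $\theta$. If $\theta\notin I_{n-1}$, then $\theta\notin I_n$ by the chain $I_n\subset I_{n-1}$, so $\psi_{n-1}(\theta)=a^2=\psi_n(\theta)$. If $\theta\in I_{n-1}\setminus I_n$, then $\psi_n(\theta)=a^2$, and $\psi_{n-1}(\theta)\leq a^2$ by the upper bound already established. If $\theta\in I_n\subset I_{n-1}$ with $\lambda_n(\theta)>0$, then the two values of $\psi$ share the common non-negative numerator $1-a\mu_4(\theta+\omega)$, while the denominator satisfies $\lambda_{n-1}(\theta)\geq\lambda_n(\theta)>0$, yielding $\psi_{n-1}(\theta)\leq\psi_n(\theta)$. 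At a zero of $\lambda_n$ we have $\psi_n(\theta)=a^2$ by convention and $\psi_{n-1}(\theta)\leq a^2$ by the upper bound, so the inequality is preserved.

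For the remaining systems, the recursion advances by a single shift $\omega$ with linear slope $a>1$ and $h_i$ monotone increasing, so the branch condition rewrites cleanly without sign flips: the capped set for \eqref{dyn_sys} is $I_n=\{\theta:a\lambda_n(\theta)\geq\pi/2-a\mu_1(\theta)\}$, and \eqref{dyn_sys_2}, \eqref{dyn_sys_3} admit analogous formulations with non-negative right-hand sides inherited from the collision condition at $b=b^*(a)$. The three-case analysis above then produces $\psi_{n-1}\leq\psi_n\leq a$ verbatim. The main delicate point of the proof is the sign manipulation for system \eqref{dyn_sys_4}: one must verify that the inequality defining $I_n$ flips in the correct direction so that the chain $I_{n+1}\subset I_n$ survives, and that the branch condition aligns with the sign of $a^2\lambda_n(\theta)$ rather than $a\lambda_n(\theta)$; this is where I would spend the most care in a formal write-up.
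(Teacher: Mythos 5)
Your proof is correct and follows essentially the same route as the paper: rewrite the branch condition of \eqref{expression_lambda_4} via the functional equation for $\mu_4$, bound $\psi_n$ by $a^2$ using exactly that inequality on the capped branch, and get monotonicity from the monotone decrease of $\{\lambda_n\}_n$ through a case analysis (your nested sets $I_{n+1}\subset I_n$ are just a repackaging of the paper's three-case enumeration). The only cosmetic differences are that the paper deduces nonnegativity of the numerator from $\lambda_0\geq 0$ rather than from $\mu_4\geq 1/a$, and it does not need the finiteness of the zero set of $\lambda_n$ at this stage.
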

\begin{proof}
    In case of system \eqref{dyn_sys_4}, we use the expression \eqref{expression_lambda_4} in equation \eqref{psi_def} to obtain
    \begin{align}\label{expr_psy_4}
        \psi_n(\theta)=
        \begin{cases}
            \frac{1 - a\mu(\theta+\omega)}{\lambda_n(\theta)} &\text{ if }a[\mu(\theta)+\lambda_n(\theta)] - b g(\theta)\leq\frac{1}{a},\\
            a^2 &\text{ if }a[\mu(\theta)+\lambda_n(\theta)] - b g(\theta)>\frac{1}{a}.
        \end{cases}
    \end{align}
    So by the expression \eqref{expr_psy_4}, if $a[\mu(\theta)+\lambda_n(\theta)] - b g(\theta)\leq\frac{1}{a}$, then $a\lambda_n(\theta)\leq \frac{1}{a}-\mu(\theta+\omega)$. Hence,
    \begin{align*}
        \psi_n(\theta) = \frac{\lambda_{0}(\theta+2\omega)}{\lambda_n(\theta)} \leq \frac{\lambda_{0}(\theta+2\omega)}{\frac{1}{a^2}-\frac{1}{a}\mu(\theta+\omega)} = a^2\frac{\lambda_{0}(\theta+2\omega)}{\lambda_{0}(\theta+2\omega)} = a^2.
    \end{align*}
    Since $\{\lambda_n\}_n$ is decreasing we have only three cases:
    \begin{enumerate}
        \item $a[\mu(\theta)+\lambda_{n+1}(\theta)] - b g(\theta)\geq a[\mu(\theta)+\lambda_n(\theta)] - b g(\theta)>\frac{1}{a}$. In this case, we have that
            \begin{align*}
                \psi_{n+1}(\theta) &= a^2,\\
                \psi_n(\theta) &= a^2.
            \end{align*}
            Hence, $\psi_{n}(\theta) \leq \psi_{n+1}(\theta)$.
            
        \item $a[\mu(\theta)+\lambda_{n+1}(\theta)] - b g(\theta)>\frac{1}{a}\geq a[\mu(\theta)+\lambda_n(\theta)] - b g(\theta)$. In this case, we have that
            \begin{align*}
                \psi_{n+1}(\theta) &= a^2.
            \end{align*}
            Hence, $\psi_{n}(\theta) \leq \psi_{n+1}(\theta)$.         
        \item $\frac{1}{a}\geq a[\mu(\theta)+\lambda_{n+1}(\theta)] - b g(\theta)\geq a[\mu(\theta)+\lambda_n(\theta)] - b g(\theta)$. In this case, we have that
            \begin{align*}
                \psi_{n+1}(\theta) &= \frac{\lambda_{0}(\theta+2\omega)}{\lambda_{n+1}(\theta)},\\
                \psi_n(\theta) &= \frac{\lambda_{0}(\theta+2\omega)}{\lambda_n(\theta)}.
            \end{align*}
            Since $\{\lambda_n\}_n$ is decreasing, we get $\psi_{n}(\theta) \leq \psi_{n+1}(\theta)$.
    \end{enumerate}
    Therefore, $0\leq \psi_{n}(\theta) \leq \psi_{n+1}(\theta)\leq a^2$, for all $n\geq0$ and for all $\theta\in\T$. A similar argument can be done to obtain the result for the other systems.
\end{proof}
We proceed in proving the pointwise convergence of the sequence $\{\psi_n\}_n$ and the integrability of its pointwise limit.
\begin{lemma}\label{lem_int_psi}
    Let $a$ satisfy Assumption \eqref{assumptions_a}. Then we have that the sequence $\{\psi_n\}_n$ converges pointwise to an integrable function $\psi$. Furthermore,
    \begin{align*}
        \int_\T \log(\psi(\theta))d\theta \leq 0.
    \end{align*}
\end{lemma}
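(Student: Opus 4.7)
The plan is to combine the pointwise monotonicity from Lemma \ref{integ_psy} with a telescoping log-integral identity driven by the recursion $\lambda_{n+1}(\theta+k\omega)=\psi_n(\theta)\lambda_n(\theta)$ (where $k=1$ for \eqref{dyn_sys}--\eqref{dyn_sys_3} and $k=2$ for \eqref{dyn_sys_4}, which follows directly from Definition \ref{definition_psi} together with the piecewise expressions for $\lambda_{n+1}$). The pointwise convergence is immediate: by Lemma \ref{integ_psy} the sequence $\{\psi_n\}$ is non-decreasing and bounded above by $a$ (or $a^2$), so it converges pointwise to a measurable function $\psi$ with $0\leq \psi\leq a^2$; boundedness gives $\psi\in L^\infty(\T)\subset L^1(\T)$.

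For the integral inequality, I would first establish that $\log\lambda_n$ is integrable for every $n$. Condition \ref{condition_analytic} or \ref{condition_factorise} forces $\lambda_0$ to vanish at a finite set of points, each of finite even order. Lemma \ref{zeros_lambda} identifies the finite set of zeros of $\lambda_n$, and in a neighbourhood of each zero the recursion reduces to $\lambda_{n+1}(\cdot+k\omega)=a\lambda_n(\cdot)$ (or $a^2\lambda_n(\cdot)$ for system 4) in the linear branch, so an induction on $n$ shows the zero orders remain finite. Near a zero $\theta_\star$ of order $m$, $\log\lambda_n(\theta)\sim m\log|\theta-\theta_\star|+O(1)$, which is integrable, and away from zeros $\lambda_n$ is continuous and positive, so $\log\lambda_n\in L^1(\T)$.

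Next, taking logarithms on the full-measure set where $\lambda_n>0$ and integrating the relation $\lambda_{n+1}(\theta+k\omega)=\psi_n(\theta)\lambda_n(\theta)$ over $\T$, and using rotation invariance of Lebesgue measure, gives
\begin{equation*}
\int_\T \log\psi_n(\theta)\,d\theta = \int_\T \log\lambda_{n+1}(\theta)\,d\theta - \int_\T \log\lambda_n(\theta)\,d\theta.
\end{equation*}
Telescoping and invoking $\lambda_n\leq\lambda_0$ (Lemma \ref{montonicity_lemma}) yields
\begin{equation*}
\sum_{k=0}^{n-1}\int_\T \log\psi_k(\theta)\,d\theta = \int_\T \log\lambda_n(\theta)\,d\theta - \int_\T \log\lambda_0(\theta)\,d\theta \leq 0.
\end{equation*}
The sequence $\{\int_\T\log\psi_k\,d\theta\}_k$ is non-decreasing because $\psi_k\leq\psi_{k+1}$; a non-decreasing sequence whose partial sums stay $\leq 0$ must consist entirely of non-positive terms (otherwise the tail would force the partial sums to $+\infty$). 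Hence $\int_\T \log\psi_n\,d\theta\leq 0$ for every $n$.

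Finally, to pass to the limit, I rewrite $\log\psi_n=\log a^2-(\log a^2-\log\psi_n)$, where $\log a^2-\log\psi_n\geq 0$ is pointwise non-increasing with $n$ (since $\psi_n\uparrow\psi$). Applying the monotone convergence theorem to this non-negative decreasing sequence, whose first term $\log a^2-\log\psi_0$ is integrable by the same zero-order argument applied to $\psi_0(\theta)=\lambda_1(\theta+k\omega)/\lambda_0(\theta)$, gives $\int_\T\log\psi_n\,d\theta\to\int_\T\log\psi\,d\theta$ and hence $\int_\T\log\psi\,d\theta\leq 0$; integrability of $\log\psi$ itself follows from the two-sided squeeze $\log\psi_0\leq\log\psi\leq\log a^2$. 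The main obstacle I expect is the inductive control on the orders of the zeros of $\lambda_n$ needed to secure $\log\lambda_n\in L^1(\T)$ and to certify $\log\psi_0\in L^1(\T)$, since everything downstream relies on these finite-order factorizations.
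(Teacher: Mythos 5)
Your proposal is correct and follows essentially the same route as the paper: the per-$n$ inequality $\int_\T\log\psi_n(\theta)\,d\theta\le 0$ is obtained from the ratio structure of $\psi_n$ in terms of the $\lambda$'s together with rotation invariance and the integrability of $\log\lambda_n$ (secured, as in the paper, by the finitely many even-order zeros coming from \ref{condition_analytic}/\ref{condition_factorise} and Lemma \ref{zeros_lambda}), and the limit is passed using $\psi_0\le\psi_n\le a^2$ with a standard convergence theorem. The only differences are cosmetic: the paper bounds $\psi_n(\theta)\le\lambda_n(\theta+2\omega)/\lambda_n(\theta)$ (same index $n$) and integrates directly rather than telescoping the exact identity with $\lambda_{n+1}$, and it invokes dominated convergence instead of your monotone-convergence-from-above variant.
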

\begin{proof}
    By the previous Lemma \ref{integ_psy}, in order to show that $\log(\psi_n)$ are integrable, we only need to show that $\log(\psi_0)$ is integrable. For every system, we can determine the expression of $\psi_0$ in terms of $\lambda_0$.
    Therefore, $\psi_0$ is well defined, continuous and it only has one zero of even multiplicity in $\theta=\theta_0$. In the general case, \ref{condition_analytic} or \ref{condition_factorise} imply that we can only have a finite number of zeros with even multiplicity. Consequently, $\log(\psi_0)$ is integrable. Moreover, in case of system \eqref{dyn_sys_4}, since $\{\lambda_{n}\}_n$ is decreasing, we have that
    \begin{align}\label{ineq_psi}
        \psi_n(\theta)\leq \frac{\lambda_{n}(\theta+2\omega)}{\lambda_n(\theta)},
    \end{align}
    for a set of $\theta$ of total measure (those which $\lambda_n(\theta)\not=0$). Hence, for these values of $\theta$ we have that
    \begin{align*}
        \log(\psi_n(\theta))\leq \log(\lambda_{n}(\theta+2\omega))-\log(\lambda_n(\theta)).
    \end{align*}
    Since $\lambda_n$ is an integrable function (its zeros are of even multiplicity),
    \begin{align*}
        \int_\T\log(\psi_n(\theta))\leq \int_\T\log(\lambda_{n}(\theta+2\omega))-\int_\T\log(\lambda_n(\theta))=0.
    \end{align*}
    Applying the Dominated Convergence Theorem we obtain that
    \begin{align*}
        \int_\T\log(\psi(\theta)) = \lim_n \int_\T\log(\psi_n(\theta)) \leq 0.
    \end{align*}
    The result for the systems \eqref{dyn_sys}, \eqref{dyn_sys_2} and \eqref{dyn_sys_3} can be deduced from replacing $\lambda_{n}(\theta+2\omega)$ for $\lambda_{n}(\theta+\omega)$ in equation \eqref{ineq_psi} and repeating the same argument.
\end{proof}
It is convenient to recall the sequences defined for each system in Lemma \ref{montonicity_lemma}. We are ready to prove Theorem \ref{big_thm}.
\begin{proof}[Proof of Theorem \ref{big_thm}]
    Consider the system \eqref{dyn_sys_4}. Since $\theta_0\in A$, $\theta_0+k\omega\in A$ for all $k\in\N$. Since the angle $\frac{\omega}{2\pi}$ is irrational, $A$ is dense in $\T$. Since $\varphi_\infty$ is upper semicontinuous, by the semicontinuity Lemma (see \cite[Theorem 11.4]{roo1982real}), the set of continuity $R_0\subset\T$ of $\varphi_\infty$ is a residual set.         
    We can show that actually $R_0=A$. 
    First assume that $\theta_*\in R_0$. Since $A$ is dense, we can consider a sequence $\{\theta_n\}_n\subset A$ such that $\theta_n\to\theta_*$. Since $\theta_*\in R_0$, we have that $\varphi_\infty(\theta_n)\to\varphi_\infty(\theta_*)$. Since $\mu$ is continuous, we have that $\mu(\theta_n)\to\mu(\theta_*)$. Therefore, as the sequence $\{\theta_n\}_n$ is in $A$, we have that $\varphi_\infty(\theta_n)=\mu(\theta_n)$. Hence, $\varphi_\infty(\theta_*)=\mu(\theta_*)$. Then we have the inclusion $R_0\subset A$. In the other direction, assume that $\theta_*\in A$. We need to prove that for any sequence $\{\theta_n\}_n\subset A$ such that $\theta_n\to\theta_*$ we have that $\varphi_\infty(\theta_n)\to\varphi_\infty(\theta_*)$. Consider the following inequality given by the upper semicontinuity of $\varphi_\infty$:
    \begin{align*}
        \mu(\theta_*) = \liminf_n \mu(\theta_n) = \liminf_n \varphi_\infty(\theta_n) \leq \limsup_n \varphi_\infty(\theta_n) \leq \varphi_\infty(\theta_*) = \mu(\theta_*).
    \end{align*}
    Hence, we have proved that $A\subset R_0$. We conclude that $A=R_0$. Let $j\in\Z$. We can repeat the argument for $\lambda_n(\theta+j\omega)$. Hence, we have the residual set $R_j$ of points of continuity. It is clear that $R_j=\{\theta-j\omega\mid\theta\in R_0\}$. Therefore, the residual set $R=\bigcap_{j\in\Z}R_j$ is positively and negatively invariant.
    In order to show that $A$ has zero measure, first we prove that
    \begin{align*}
        A\smallsetminus\{\theta_0-\omega\}\subset\{\theta_0\in \T\mid\psi(\theta)=a^2\}.
    \end{align*}
    If we take a $\theta\in A\smallsetminus\{\theta_0-\omega\}$, then $\varphi_\infty(\theta)=\mu(\theta)>\frac{1}{a}$. Hence, there exists an $n_0$ such that, for all $n\geq n_0$, we have that $\mu(\theta)+\lambda_n(\theta)=\varphi_n(\theta)>\frac{1}{a}$. This implies that, for all $n\geq n_0$, we have that $\lambda_{n+1}(\theta+\omega)=a^2\lambda_{n}(\theta)$. Hence, $\psi_n(\theta)=a^2$ for all $n\geq n_0$. Assume that $A$ has positive measure. By ergodicity, $A$ has total measure. This implies that the set of $\theta\in\T$ such that $\psi(\theta)=a^2$ has total measure. Since $a^2>1$, we obtain
        \begin{align*}
            \int_\T \log(\psi(\theta)) d\theta = \log(a^2) > 0.
        \end{align*}
    This is contradictory with Lemma \ref{lem_int_psi}. Hence, $A$ has zero measure. The proof for the systems \eqref{dyn_sys}, \eqref{dyn_sys_2} and \eqref{dyn_sys_3} is obtained from replacing $\frac{1}{a}$ for the corresponding value where the definition of $h_i$ changes. Additionally, it is needed to replace $a^2$ by $a$.
\end{proof}

The derivative of the function $h_i$ is well defined except in a point $x_0$ (for the system \eqref{dyn_sys_4} the derivative of $h_4$ is not well defined at $x_0=\frac{1}{a}$). We define $h_i'(x_0)=a$.
We can proceed to compute the Lyapunov exponent of the semicontinuous invariant or two-periodic curve.
\begin{proof}[Proof of Proposition \ref{lyap_exp}]
    Consider the system \eqref{dyn_sys_4}. First we show that the set
    \begin{align*}
        B = \left\{\theta\in\T\mid a\varphi_\infty(\theta)-b g(\theta)<\frac{1}{a}\right\}
    \end{align*}
    has positive measure. If we assume that $B$ has zero measure, then the Fourier coefficients of $\lambda_\infty=\varphi_\infty-\mu$ are
    \begin{align*}
        \lambda^{(k)} = \frac{1}{2\pi}\int_\T \lambda(\theta)e^{-ik\omega}d\theta = \frac{1}{2\pi}\int_{\T\smallsetminus B} \lambda(\theta)e^{-ik\omega}d\theta.
    \end{align*}
    Since $a(\mu(\theta)+\lambda_\infty(\theta))-b g(\theta)=a\varphi_\infty(\theta)-b g(\theta)\geq\frac{1}{a}$ for $\theta\in\T\smallsetminus B$, $\lambda_\infty(\theta+\omega)=a^2\lambda_\infty(\theta)$. Hence, we have that $\lambda^{(k)}e^{ik\omega} = a^2\lambda^{(k)}$. Since $a^2>1$, we obtain that $\lambda^{(k)}=0$ for all $k$. Therefore, $\lambda=0$ for almost every $\theta$. This is contradictory with the second statement of Theorem \ref{big_thm}. Note that, if we let $\Bar{\Bar{x}}=f(x,\theta)$, then $\frac{\partial f}{\partial x}(x,\theta) = h_a'(h_a(x)-b g(\theta))h_{a}'(x)$. Recall that $\varphi_\infty(\theta)\geq\mu(\theta)\geq\frac{1}{a}$ for all $\theta\in\T$, so $\frac{\partial f}{\partial x}(\varphi_\infty(\theta),\theta) = h_a'(a\varphi_\infty(\theta)-b g(\theta))h_{a}'(\varphi_\infty(\theta))$.

    Hence, the Lyapunov exponent is
    \begin{align*}
        \Lambda &= \frac{1}{2\pi}\int_\T\log\left(\frac{\partial f}{\partial x}(\varphi_\infty(\theta),\theta)\right)d\theta\\
        &= \frac{1}{2\pi}\int_{\T\smallsetminus B}\log\left(\frac{\partial f}{\partial x}(\varphi_\infty(\theta),\theta)\right)d\theta + \frac{1}{2\pi}\int_B\log\left(\frac{\partial f}{\partial x}(\varphi_\infty(\theta),\theta)\right)d\theta\\
        &= \frac{1}{2\pi}\log(a^2)m(\T\smallsetminus B) + \frac{1}{2\pi}(-\infty)m(B) = -\infty.
    \end{align*}
    The proof for the systems \eqref{dyn_sys}, \eqref{dyn_sys_2} and \eqref{dyn_sys_3} is obtained, with minor modifications, from replacing $\frac{1}{a}$ for the corresponding value where the definition of $h_i$ changes. Additionally, it is needed to replace $a^2$ by $a$.
\end{proof}
Observe that, from the invariant equation, we deduce that the closure of the attracting set $\Lambda_i$ contains a repelling invariant curve for each system $i=1,2,3,4$.
We proceed in proving Theorem \ref{finite_num_iter} where the attracting character of the curve $\varphi_\infty$ is described.
\begin{proof}[Proof of Theorem \ref{finite_num_iter}]
    Consider the system \eqref{dyn_sys_4}.
    Take an arbitrary point $(x_0,\theta_0)\in\R\times\T$ such that $\mu(\theta_0)<x_0$. Then we have that $\mu(\theta_{2n})<x_{2n}$. Let $T_\omega\colon\T\to\T$ denote the irrational rotation $T_\omega(\theta)=\theta+\omega$. Recall that the sets $A$ and $B$ are defined by
    \begin{align*}
        A &= \{\theta\in\T\mid\mu(\theta)=\varphi_\infty(\theta)\},\\
        B &= \left\{\theta\in\T\mid a\varphi_\infty(\theta)-b g(\theta)<\frac{1}{a}\right\}.
    \end{align*}
    Then we define the set $C$ as
    \begin{align*}
        C = \bigcup_{n\in\N} T_{2\omega}^{-n}(B).
    \end{align*}
    Since the Lebesgue measure is invariant and ergodic by the irrational rotation $T_{2\omega}$, we use \cite[Theorem 1.5]{walters2000introduction} to obtain that $C$ has total measure. We have several cases:
    \begin{enumerate}
        \item Case $\varphi_\infty(\theta_0)< x_0$ for $\theta_0\in C$.

        Since $\theta_0\in C$, there exists an $n_0$ such that $\theta_{n_0}\in B$. Therefore, $\frac{1}{a}>a\varphi_\infty(\theta_{n_0})-b g(\theta_{n_0})>ax_{n_0}-b g(\theta_{n_0})$ and thus $\varphi_\infty(\theta_{n_0+1})= x_{n_0+1}$.

        \item Case $\mu(\theta_0)<x_0<\varphi_\infty(\theta_0)$ for $\theta_0\in A^c$.

        Since $A$ is a set of zero measure, $A^c$ is a set of full measure. Assume $x_{n}>\frac{1}{a}$ for all $n$. Then $x_{2n}-\mu(\theta_{2n}) = a^{2n}(x_0-\mu(\theta_0))$. Since $a<-1$, $x_{2n}-\mu(\theta_{2n})$ is unbounded. This implies that $x_{2n}$ is unbounded, which is impossible. Hence, there exists an $n_0$ such that $x_{n_0}\leq\frac{1}{a}$. Therefore, there exists a $\theta_*\in\T$ such that $x_{n_0+1}=\varphi_0(\theta_*)>\varphi_\infty(\theta_*)$.
    \end{enumerate}
    We define the set $E=C\cap A^c$ which is also of full measure. Hence, we finally define the set $\Omega=\{(x,\theta)\in\R\times E\mid\mu(\theta)<x\}$. The proof for the rest of the systems follows a similar argument. The main differences are that the set $B$ changes depending on each system:
    \begin{itemize}
        \item For the system \eqref{dyn_sys}, $B = \left\{\theta\in\T\mid\varphi_\infty(\theta)>\frac{\pi}{2a}\right\}$.
        \item For the system \eqref{dyn_sys_2}, $B = \left\{\theta\in\T\mid\varphi_\infty(\theta)<\delta\right\}$.
        \item For the system \eqref{dyn_sys_3}, $B = \left\{\theta\in\T\mid\varphi_\infty(\theta)<-\frac{1}{a}\right\}$.
    \end{itemize}
\end{proof}
\begin{remark}
        Note that, for all $(\theta_0,x_0)\in\{(\theta,x)\mid\mu(\theta)\leq x\leq\varphi_\infty(\theta)\}$, there exists an $n_0$ such that $(\theta_{n_0},x_{n_0})$ belongs to the graph of the semicontinuous curve. This implies that, between the curves, the semicontinuous curve attracts all the points for each fiber.
    \end{remark}

We now prove that the graph of the semicontinuous invariant curve is dense in the compact invariant set $\Lambda_i$, see Theorem \ref{thm_compactinvregion}. The curve $\varphi_\infty$ is a measurable function by the semicontinuity. Then, we can apply Fubini's Theorem to the indicator function of the set $\tilde{\Lambda}_4$. We can compute that
\begin{align*}
    \text{meas}(\Lambda_4)=\int_\T\int_{\R}\chi_{\Lambda_4}(\theta,x) dx d\theta = \int_\T\int_{\mu(\theta)}^{\varphi_\infty(\theta)}dx d\theta = \int_\T \lambda_\infty(\theta)d\theta.
\end{align*}
Since we have proved, in Theorem \ref{big_thm}, that the set $\{\theta\in\T\mid \lambda_\infty(\theta)>0\}$ has total measure, we have that $\text{meas}(\Lambda_4)>0$. We can do a similar argument for the rest of systems. Therefore, we obtain that, for $i=1,2,3,4$, the invariant set $\Lambda_i$ has positive Lebesgue measure.
Recall that, given a set $A\subset X$ from a metric space $X$, the distance from a point $x\in X$ to the set $A$ is defined by $d(x,A)=\inf_{a\in A}d(x,a)$. It is known that $d(x,A)=0$ if and only if $x\in\Bar{A}$. We are ready to prove Theorem \ref{densegraph}.
\begin{proof}[Proof of Theorem \ref{densegraph}]
     Consider the system \eqref{dyn_sys_4}. In the case that $\mu(\theta)=\varphi_\infty(\theta)$, for all $\theta\in\T$, then $\lambda_\infty(\theta)=0$, for all $\theta\in\T$. This is impossible since $\lambda_\infty(\theta)>0$ almost everywhere. Hence, there exists $z_0=(\theta_*,x_*)\in\Lambda_4$ such that $\mu(\theta_*)<x_*<\varphi_\infty(\theta_*)$. We define $\Phi=\{(x,\theta)\in\T\times\R\mid\varphi_\infty(\theta)=x\}$ and $M=\{(\theta,x)\in\T\times\R\mid x = \mu(\theta)\}$. Assume, by contradiction, that $d(z_0,\Phi)>0$. Let $\delta=\min\{d(z_0,M),d(z_0,\Phi)\}>0$ and $B_\delta(z_0)$ be an open ball of radius $\delta$ centred at $z_0$. If $z_1=(\theta_1,x_1)\in B_\delta(z_0)$, then $\mu(\theta_1)<x_1<\varphi_\infty(\theta_1)$. Otherwise $\mu(\theta_1)=x_1=\varphi_\infty(\theta_1)$, which is contrary to the definition of $\delta$. Since $b=b^*(a)$, we have that $\{\theta\in\T\mid \mu(\theta)=\varphi_\infty(\theta)\}$ is dense in $\T$. Hence, we can consider a sequence $\{\theta_n\}_n\to\theta_*$ such that $\mu(\theta_n)=\varphi_\infty(\theta_n)$, for every $n\geq0$. Therefore, there is an $N>0$ such that $d(\theta_N,\theta_*)<\delta$ and $\mu(\theta_N)=\varphi_\infty(\theta_N)$. Let $z_N$ be such that $z_N=(\theta_N,x_*)$. We can compute that 
     \begin{align*}
        d(z_n,z_0) = d(\theta_N,\theta_*) < \delta.
     \end{align*}
     Now we have that $\mu(\theta_N)=\varphi_\infty(\theta_N)$. Then $\mu(\theta_N)=x_*=\varphi_\infty(\theta_N)$. The last equality is a contradiction with the fact that, for all $z_1=(\theta_1,x_1)\in B_\delta(z_0)$, we have $\mu(\theta_1)<x_1<\varphi_\infty(\theta_1)$. Hence, we have that $d(z_0,\Phi)=0$. Therefore, $z_0$ is in the closure of $\Phi$. The proof of the rest of systems follows from similar arguments.
\end{proof}
\begin{remark}\label{keller_remark}
    Notice that, in the proof of Theorem \ref{densegraph}, we only use that $\{\theta\in\T\mid\lambda_\infty(\theta)=0\}$ is dense and that there exists a $\theta^*\in\T$ such that $\lambda_\infty(\theta^*)>0$. In \cite{keller1996sna}, we have similar hypothesis. In the work, $\{\theta\in\T\mid\phi(\theta)=0\}$ is dense and $\{\theta\in\T\mid\phi(\theta)>0\}$ has full Lebesgue measure, see \cite[Theorem 1(3)]{keller1996sna}. Therefore, the curve $\phi$ is dense in $\{(\theta,x)\mid0\leq x\leq\phi(\theta)\}$.
\end{remark}

\subsection{Before the nonsmooth bifurcation}\label{three_curves_secion}
We assume that $0<b<b^*(a)$ and $a$ satisfies Assumption \eqref{assumptions_a}. For $0<b<b^*(a)$, we are able to prove the continuity of the curves and we determine the number of continuous invariant or two-periodic curves each system has.
\begin{proof}[Proof of Proposition \ref{prop_onlyonecurve}]
    Consider the system \eqref{dyn_sys_4}. We define $d_b$ as
    \begin{align*}
        d_b = \min_{\theta\in\T}\left(\frac{1}{a}-\mu(\theta)\right)<0.
    \end{align*}
    Since $0<b<b^*(a)$, $d_b$ is positive. We define the curves $\rho_{0}(\theta) = \mu(\theta)+d_b$. By Lemma \ref{lemma_distance}, we have that $\mu(\theta)>\rho_{0}(\theta)\geq\frac{1}{a}$ for all $\theta\in\T$. For all $n\geq0$, we define
    \begin{align*}
        \rho_{n+1}(\theta) &:= h_a(h_{a}(\rho_n(\theta-2\omega)) - b g(\theta-2\omega)) - b g(\theta-\omega).
    \end{align*}
    Since $\rho_{0}(\theta)\geq\frac{1}{a}$ and $a d_b>0$, we have that
    \begin{align*}
        \rho_{1}(\theta) &= h_a(a(\rho_0(\theta-2\omega)) - b g(\theta-2\omega)) - b g(\theta-\omega)\\
        &= h_a(a\mu(\theta-2\omega) + a d_b - b g(\theta-2\omega)) - b g(\theta-\omega)\\
        &= h_a(\mu(\theta-\omega) +a d_b) - b g(\theta-\omega)\\
        &= a\mu(\theta-\omega) + a^2 d_b - b g(\theta-\omega)\\
        &= \mu(\theta) + a^2 d_b = \rho_0(\theta) + (a^2-1) d_b.
    \end{align*}
    By induction assume $\rho_n(\theta)\geq \rho_{n+1}(\theta)$. Hence,
    \begin{align*}
        h_a(\rho_{n}(\theta-2\omega))- b g(\theta-2\omega) \leq h_a(\rho_{n+1}(\theta-2\omega))- b g(\theta-2\omega).
    \end{align*}
    Then we have that
    \begin{align*}
        \rho_{n+1}(\theta) &= h_a(a(\rho_{n}(\theta-2\omega)) - b g(\theta-2\omega)) - b g(\theta-\omega)\\
        &\geq h_a(a(\rho_{n+1}(\theta-2\omega)) - b g(\theta-2\omega)) - b g(\theta-\omega) = \rho_{n}(\theta).
    \end{align*}
    The sequence $\{\rho_{n}\}$ is decreasing and $\rho_{n}(\theta)\geq\varphi_{n}(\theta)$, for all $\theta\in\T$. 
    Since $\{\rho_{n}\}$ is decreasing and bounded, it is convergent to a lower semicontinuous curve $\rho_\infty$.
    To conclude with the proof, we show that $\rho_\infty=\varphi_\infty$. Let $\theta_0\in\T$. By induction, we can see that if $\rho_{i}(\theta_0+i\omega))>\frac{1}{a}$, for $i=2(n-1),\dots,2,0$, then
    \begin{align*}
        \rho_{2n}(\theta_0+n\omega) = \mu(\theta_0+2n\omega) + a^{2n}  d_b.
    \end{align*}
    Since $a^{2n}  d_b<0$ is unbounded, there exists an $n_0$ such that
    \begin{align*}
        \mu_{n_0+1}(\theta_0+(n_0+1)\omega) = 1 - b g(\theta) = \varphi_0(\theta_0+(n_0+1)\omega).
    \end{align*}
    Hence, by the continuity of the functions $\mu_n$, there exists an open interval $I_0\subset\T$ where, for all $\theta\in I_0$, we have
    \begin{align*}
        \rho_{n_0+1}(\theta_0+(n_0+1)\omega) = \varphi_0(\theta_0+(n_0+1)\omega).
    \end{align*}
    Therefore, we conclude that $\mu_\infty=\varphi_\infty$. For the rest of the systems, the proof use similar arguments. We use the distance between the first curve of the first sequence, in Lemma \ref{montonicity_lemma}, to define some curve near the curve that comes from the invariant equation. Then, we use the unboundedness of a particular orbit to show that some iteration of the curve is equal to the first element of the first sequence (for each system) in Lemma \ref{montonicity_lemma}. 
\end{proof}
\begin{remark}
    Let $a$ satisfy Assumption \eqref{assumptions_a} and $0<b<b^*(a)$. The following arguments show there are no more continuous invariant curves.
    \begin{itemize}
        \item For the system \eqref{dyn_sys}, any other invariant curve $\rho$ should be equal to $\tilde\varphi_1$ or $\tilde\gamma_1$ in an open interval. The image of $\rho$ cannot be completely contained in $[-\frac{\pi}{2a},\frac{\pi}{2a}]$.
        \item For the system \eqref{dyn_sys_2}, any other invariant curve $\rho$ should be equal to $\tilde\varphi_2$ in an open interval. The image of $\rho$ cannot be completely contained in $(-\infty,- \delta)$ or in $( \delta,\infty)$.
        \item For the system \eqref{dyn_sys_3}, any other invariant curve $\rho$ should be equal to $\tilde\varphi_3$ in an open interval. The image of $\rho$ cannot be completely contained in $(-\frac{1}{a},\infty)$.
        \item For the system \eqref{dyn_sys_4}, any other invariant curve $\rho$ should be equal to $\tilde\varphi_4$ in an open interval. The image of $\rho$ cannot be completely contained in $(\frac{1}{a},\infty)$.
    \end{itemize}
\end{remark}
By the argument in the proof of Proposition \ref{prop_onlyonecurve}, if $0<b<b^*(a)$, we have two sequences of continuous functions converging to our semicontinuous curve $\varphi_\infty$. One from below and the other one from above. This implies that $\varphi_\infty$ is a continuous curve for each system. 

\subsection{After the nonsmooth bifurcation}\label{sect_after}
First we assume that $b>b^*(a)$ and $a$ satisfies \eqref{assumptions_a}. We prove the regularity of the curves and we determine the number of continuous invariant curves has each system. We proceed similarly to in Section \ref{three_curves_secion}.
\begin{proof}[Proof of Proposition \ref{curve_between}]
    Consider the system \eqref{dyn_sys_4}. 
    Let $\theta_0\in\T$ be the minimum of $\varphi_0-\mu$. Thus, by Lemma \ref{lemma_distance},
    \begin{align*}
        \varphi_0(\theta_0)-\mu(\theta_0) = a\left(\frac{1}{a}-\mu(\theta_0-\omega)\right)<0.
    \end{align*}
    Let $\delta=\frac{1}{a}-\mu(\theta_0-\omega)$. We define $\rho_0(\theta) = \mu(\theta)+\delta$. By definition, $\rho_0(\theta)\geq\frac{1}{a}$ for all $\theta\in\T$. We define the following sequence, for $n\geq0$,
    \begin{align*}
        \rho_{n+1}(\theta) &:= h_a(h_{a}(\rho_n(\theta-2\omega)) - b g(\theta-2\omega)) - b g(\theta-\omega).
    \end{align*}
    We want to show that there is an interval $I_0$ where $\rho_n(\theta)=\varphi_0(\theta)$, for all $\theta\in I_0$ and for some $n$.
    If we have that $\mu(\theta_0-\omega) + a\delta < \frac{1}{a}$, then we can compute that $\rho_{1}(\theta_0) = \varphi_0(\theta_0)$. Hence, by continuity, there is an interval that contains $\theta_0$ for which $\rho_{1}(\theta) = \varphi_0(\theta)$. Otherwise, we have that 
    \begin{align*}
        \rho_{1}(\theta_0) &= h_a(a(\rho_0(\theta_0-2\omega)) - b g(\theta_0-2\omega)) - b g(\theta_0-\omega)\\
        &= h_a(a\mu(\theta_0-2\omega) + a\delta - b g(\theta_0-2\omega)) - b g(\theta_0-\omega)\\
        &= h_a(\mu(\theta_0-\omega) +a\delta) - b g(\theta_0-\omega)\\
        &= a\mu(\theta_0-\omega) + a^2\delta - b g(\theta_0-\omega)\\
        &= \mu(\theta_0) + a^2\delta.
    \end{align*}
    Assume by induction that $\rho_n(\theta_0)=\mu(\theta_0)+a^{2n}\delta$ and that $\mu(\theta_0-\omega) + a^{2n+1}\delta \geq \frac{1}{a}$. Then, 
    \begin{align}
        \rho_{n+1}(\theta_0) &= h_a(a(\rho_n(\theta_0-2\omega)) - b g(\theta_0-2\omega)) - b g(\theta_0-\omega) \nonumber\\
        &= h_a(a\mu(\theta_0-2\omega) + a^{2n+1}\delta - b g(\theta_0-2\omega)) - b g(\theta_0-\omega) \nonumber\\
        &= h_a(\mu(\theta_0-\omega) +a^{2n+1}\delta) - b g(\theta_0-\omega) \nonumber\\
        &= a\mu(\theta_0-\omega) + a^{2n+2}\delta - b g(\theta_0-\omega) \nonumber\\
        &= \mu(\theta_0) + a^{2(n+1)}\delta. \label{unbounded_argument}
    \end{align}
    Since $a<-1$ and $\delta>0$, we have that the term $a^{2(n+1)}\delta>0$ is unbounded.
    Therefore, there exists an $n$ such that $\mu(\theta_0-\omega) + a^{2n+1}\delta < \frac{1}{a}$. Thus, $\rho_{n+1}(\theta)=\varphi_0(\theta)$ in some interval $I_0$. Hence, there is a unique continuous invariant curve. For the systems \eqref{dyn_sys} and \eqref{dyn_sys_2}, the argument is very similar to the one showed here. For the system \eqref{dyn_sys_3}, a very similar argument shows that we have two sequences of continuous curves, one monotonically increasing and the other monotonically decreasing that converge to the same invariant curve, let's say $\rho$. The image of $\rho$ cannot be entirely contained in $\{x>-\frac{1}{a}\}$. Otherwise, as it is invariant, $\rho=\mu$. Define $\delta=\mu(\theta_0-\omega)+\frac{1}{a}$ and $\rho_0(\theta)=\mu(\theta)+\delta$. Hence, using a similar argument as in \eqref{unbounded_argument}, it can be proved that $\rho_n(\theta_o+n\omega)-\mu(\theta_o+n\omega)=a^n\delta$. Therefore, there is no other continuous invariant curve above $\mu$, because the distance between $\rho_n$ and $\mu$ is unbounded. 
\end{proof}

Moreover, once we know that for $0<b<b^*(a)$ and $b>b^*(a)$ the curve $\varphi_\infty$ is continuous, we can prove even more regularity of this curve.
\begin{prop}\label{regularity_prop}
    Let $a$ satisfy Assumption \eqref{assumptions_a} and $0<b<b^*(a)$. If $\varphi_\infty$ is continuous, then $\varphi_\infty$ is piecewise $C^{1+\tau}$ with $\tau>0$.
\end{prop}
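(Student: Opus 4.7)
The plan is to partition the circle $\T$ according to the number of backward rotations by $\omega$ needed for the orbit of $\varphi_\infty$ to land on the constant branch of the piecewise-linear map $h_i$; on each piece the invariance equation can be unrolled into a finite sum of rotated values of $g$ plus a constant, which inherits the $C^{1+\tau}$ regularity of $g$.

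I would carry this out first for system \eqref{dyn_sys_3}. Set $U_C=\{\theta\in\T:\varphi_\infty(\theta)\leq -1/a\}$ (closed by continuity of $\varphi_\infty$), $U_L=\T\setminus U_C$, and for every $k\geq 0$
\begin{align*}
A_k=\{\theta\in\T:\theta-j\omega\in U_L\text{ for }j=1,\dots,k,\ \theta-(k+1)\omega\in U_C\}.
\end{align*}
Iterating $\varphi_\infty(\theta)=h_3(\varphi_\infty(\theta-\omega))+bg(\theta-\omega)$ on $A_k$, using the linear branch the first $k$ times and the constant branch $h_3=-1$ at step $k+1$, gives
\begin{align*}
\varphi_\infty(\theta)=-a^k+b\sum_{j=0}^{k}a^j g(\theta-(j+1)\omega),\qquad\theta\in A_k,
\end{align*}
which is the restriction to $A_k$ of a globally $C^{1+\tau}$ function of $\theta$. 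To see $\bigcup_{k\geq 0}A_k=\T$, suppose some $\theta_0$ is missing from the union; then $\theta_0-n\omega\in U_L$ for every $n\geq 1$, and density of $\{\theta_0-n\omega\}_{n\geq 1}$ in $\T$ together with continuity of $\varphi_\infty$ force $\varphi_\infty\geq -1/a$ on $\T$. The invariance relation then reduces to its linear form, whose unique continuous solution (because $|a|>1$) is $\mu_3$; this contradicts the strict inequality $\varphi_\infty<\mu_3$ supplied by Proposition \ref{prop_onlyonecurve}.

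The same scheme covers the remaining systems. In \eqref{dyn_sys} and \eqref{dyn_sys_2} there are two constant/flat branches of $h_i$, so the partition splits into two families indexed by which branch is reached at the terminating step, and the contradiction step is essentially identical. For \eqref{dyn_sys_4} I would pass to $\mathcal{F}_4^2$ and to the companion two-periodic curve $\hat\varphi_\infty=\mathcal{F}_4(\varphi_\infty)$, which is continuous because $\varphi_\infty>\mu_4\geq 1/a$ sits strictly inside the linear branch of $h_4$; define $U_C=\{\theta:\hat\varphi_\infty(\theta)\leq 1/a\}$ and index the partition by the smallest $k\geq 0$ for which $\theta-(2k+1)\omega\in U_C$. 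The covering argument again proceeds by density and continuity: if the union does not exhaust $\T$, then $\hat\varphi_\infty\geq 1/a$ on $\T$, so $\varphi_\infty$ satisfies the fully linear functional equation for $F_4^2$; its unique continuous solution (guaranteed by $a^2>1$) is $\mu_4$, contradicting $\varphi_\infty>\mu_4$.

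The main point I expect to be delicate is the bookkeeping for \eqref{dyn_sys_4}, where two applications of $h_4$ enter $\mathcal{F}_4^2$ and one must verify that partitioning by $\hat\varphi_\infty$ alone correctly selects the branch of $h_4$ at both intermediate steps; the uniform lower bound $\varphi_\infty>1/a$ is what makes this work. Throughout, we read \emph{piecewise} $C^{1+\tau}$ in the natural sense that $\varphi_\infty$ coincides on each piece $A_k$ with the restriction of a globally $C^{1+\tau}$ function of $\theta$, so in particular $\varphi_\infty$ is $C^{1+\tau}$ on the interior of every piece.
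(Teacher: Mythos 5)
Your argument is correct, but it is organized quite differently from the paper's proof, so a comparison is worthwhile. The paper does not partition by a backward stopping time with explicit unrolled formulas; instead it takes the set $B$ of angles where the orbit of $\varphi_\infty$ falls onto the constant branch (shown to have positive measure by a Fourier argument), uses ergodicity of the irrational rotation to get a full-measure set $C=\bigcup_n T^{-n}(B)$, upgrades a point of $C$ to an open interval $I_0$ by continuity of $\varphi_\infty$, and then uses minimality plus compactness to cover $\T$ by finitely many translates $I_0+m\omega$, on each of which $\varphi_\infty$ coincides with a finite iterate $\varphi_{n_0+m}$ of the explicit seed $\varphi_0$; piecewise regularity of those finite iterates gives the claim. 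Your route replaces the ergodic-theoretic step by a purely topological one: every backward orbit must hit the flat branch, because otherwise density of the rotation orbit and continuity force the fully linear invariance equation, whose unique continuous solution (using $\abs{a}>1$, resp.\ $a^2>1$) is $\mu_i$, contradicting the strict separation from Proposition \ref{prop_onlyonecurve}; the same separation is what underlies the paper's positive measure of $B$, so both proofs hinge on the same fact. What your version buys is explicitness (closed-form expressions for $\varphi_\infty$ on each piece, no appeal to the ergodic theorem); what the paper's version buys is a finite cover of $\T$ by open intervals, i.e.\ a stronger form of ``piecewise''. Two small points you should tighten: your description of system \eqref{dyn_sys_2} is off (it has one flat central branch and two distinct linear branches, so the pieces must be indexed by the full branch itinerary before the terminating step, not merely by the terminal branch — the unrolled formulas remain explicit), and your countable, non-interval partition can in fact be made finite: your covering argument forces $U_C$ to have nonempty interior (a countable union of nowhere dense closed translates cannot exhaust $\T$), and minimality of the rotation then bounds the first hitting time uniformly, so only finitely many $A_k$ are nonempty, bringing your conclusion in line with the paper's reading of piecewise $C^{1+\tau}$.
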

\begin{proof}
    Consider the system \eqref{dyn_sys_4}. The proof is very similar to the proof of Theorem \ref{finite_num_iter}, but taking advantage that now $\varphi_\infty$ is continuous. Recall that the set
    \begin{align*}
        B=\left\{\theta\in\T\mid a\varphi_\infty(\theta)-b g(\theta)<\frac{1}{a}\right\}
    \end{align*}
    has positive measure. As in Theorem \ref{finite_num_iter},
    \begin{align*}
        C=\bigcup_{n\geq0}T^{-n}(B)
    \end{align*}
    is a set of total Lebesgue measure. Let $\theta_0\in C$, and consider $(\theta_0,\varphi_0(\theta_0))\in\T\times\R$. Then there exists an $n_0>0$ such that $\theta_0+n_0\omega\in B$. Therefore, we have that 
    \begin{align}\label{eq_outside}
        \frac{1}{a}>a\varphi_\infty(\theta_0+n_0\omega)-b g(\theta_0+n_0\omega)\geq a\varphi_{n_0}(\theta_0+n_0\omega)-b g(\theta_0+n_0\omega).
    \end{align}
    Hence, we have that $\varphi_\infty(\theta_0+(n_0+1)\omega) = \varphi_{n_0+1}(\theta_0+(n_0+1)\omega) = \mathcal{F}_4^{2(n_0+1)}(\varphi_0)(\theta_0+3(n_0+1)\omega)$.
    Since $\varphi_\infty$ is continuous, we have that there exists an open interval $I_0$ containing $\theta_0$ such that equation \eqref{eq_outside} is verified, for all $\theta\in I_0$. Hence, $\varphi_\infty(\theta+(n_0+1)\omega) = \varphi_{n_0+1}(\theta+(n_0+1)\omega)$ for all $\theta\in I_0$. Therefore, if we iterate, we obtain that $\varphi_\infty(\theta+(n_0+m)\omega) = \varphi_{n_0+m}(\theta+(n_0+m)\omega)$ for all $\theta\in I_0$.
    Now, since $\T$ is compact, there exists $m_0>0$ such that $\T\subset\cup_{m=0}^{m_0} (I_0+m\omega)$. Moreover, for all $m=0,\dots,m_0$, we have that $\varphi_\infty(\theta) = \varphi_{n_0+m}(\theta)$ for all $\theta\in I_0+m\omega$.
    
    With similar arguments, we can prove the result for the other systems. As in the proof of Theorem \ref{finite_num_iter}, the main changes come from the definition of the set $B$.
\end{proof}
As a consequence of Proposition \ref{regularity_prop}, we get that the curve $\varphi_\infty$ is piecewise as regular as $g$, which proves Theorem \ref{regularity_thm}.

\subsection{The fractalization mechanism}\label{fractalization_section}
In this section, we prove that if $a$ satisfies \eqref{assumptions_a}, then the attracting invariant or two-periodic curve fractalizes, when $b$ approaches $b^*(a)$ from below. For the system \eqref{dyn_sys_4}, the two-periodic curve $\varphi_\infty$ is precisely the curve whose fractalization we intend to establish.
First, we give a proper definition of the fractalization mechanism in our setting. We follow the idea given in \cite{jorba2008mechanism}, where the authors remark that the Hausdorff dimension does not detect that a smooth curve is becoming "fractal". This is owing to the fact that, as long as the invariant object is a curve, it takes the value 1. In the paper mentioned, they use that, for $b\to b^*(a)$, the $C^0$ norm of the curve keeps bounded while the $C^1$ norm of the curve is unbounded. In our case, we cannot compute the derivative because $h_{i}$ is not $C^1$. We extend the definition of the fractalization process in terms of the Lipschitz constants of the invariant curves. We denote by $\text{Lip}(\T,\R)$ the Banach space of Lipschitz functions from $\T$ to $\R$ endowed with the norm $\norm{\cdot}_L:=\norm{\cdot}_\infty+L(\cdot)$, where $L:\text{Lip}(\T,\R)\to\R$ is the operator that associates a Lipschitz function $f$ to its smallest Lipschitz constant $L(f)$. See \cite{cobzacs2019lipschitz} for further information about the Banach space of Lipschitz functions.

\begin{definition}
    Let $f_b$ be an invariant curve of the systems \eqref{dyn_sys}, \eqref{dyn_sys_2}, \eqref{dyn_sys_3}, \eqref{dyn_sys_4}. We say that a family of invariant curves $\{f_b\}_b$ parametrized by $b$ is \textit{fractalizing for a parameter $b^*$} if the Lipschitz constants associated to each curve of the family $\{f_b|_I\}_b$ becomes unbounded as $b\to b^*$, for any interval $I\subset\T$, while the family of curves $\{f_b\}_b$ remains bounded, i.e., if
    \begin{align*}
        \limsup_{b\to b^*} \frac{L_{f_b|_I}}{\norm{f_b}}_\infty = +\infty.
    \end{align*}
\end{definition}

In certain cases, when the perturbation function $g$ is positive, we can prove that the family of invariant curves is monotone with respect the parameter $b$.
\begin{prop}\label{monotonic_sequence_limit}
    Let $a$ satisfy Assumption \eqref{assumptions_a}. If $g(\theta)\geq0$ for all $\theta\in\T$, then for $0<b_0\leq b_1\leq b^*(a)$ we have the following situation:
    \begin{itemize}
        \item For the systems \eqref{dyn_sys} and \eqref{dyn_sys_4}, we have that $\varphi_\infty(b_1,\theta)\leq\varphi_\infty(b_0,\theta)$ for all $\theta\in\T$.
        \item For the systems \eqref{dyn_sys_2} and \eqref{dyn_sys_3}, we have that $\varphi_\infty(b_0,\theta)\leq\varphi_\infty(b_1,\theta)$ for all $\theta\in\T$.
    \end{itemize}
\end{prop}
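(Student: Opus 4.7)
The plan is to promote the $b$-monotonicity of the seed $\tilde\varphi_i$ through the iteration of Lemma \ref{montonicity_lemma} and then pass to the pointwise limit. Since $g\geq 0$, the seeds are themselves monotone in $b$: $\tilde\varphi_1$ and $\tilde\varphi_4$ are decreasing (they carry $-bg$), while $\tilde\varphi_2$ and $\tilde\varphi_3$ are increasing (they carry $+bg$). The goal is to show that each $\varphi_n(b,\cdot)$ inherits this monotonicity, which then passes to the limit $\varphi_\infty$.

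For systems \eqref{dyn_sys}, \eqref{dyn_sys_2} and \eqref{dyn_sys_3} the induction is immediate because $h_i$ is monotone increasing. For instance, for system \eqref{dyn_sys} with $b_0\leq b_1$, assuming $\varphi_n(b_1,\theta)\leq\varphi_n(b_0,\theta)$ one has
\begin{align*}
\varphi_{n+1}(b_1,\theta) &= h_1(\varphi_n(b_1,\theta-\omega))-b_1 g(\theta-\omega)\\
&\leq h_1(\varphi_n(b_0,\theta-\omega))-b_0 g(\theta-\omega)=\varphi_{n+1}(b_0,\theta)
\end{align*}
by monotonicity of $h_1$ and $g\geq 0$. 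Systems \eqref{dyn_sys_2} and \eqref{dyn_sys_3} go through identically with the reverse inequality. Letting $n\to\infty$ concludes these three cases.

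System \eqref{dyn_sys_4} is the hard part: $h_4$ is monotone decreasing, so one must use the two-step iteration $\mathcal{F}_4^2$, which is monotone in $\varphi$ but not in general in $b$ for fixed $\varphi$. Using $h_4(x)=\min(ax,1)$, for $\varphi(\theta-2\omega)>1/a$ (preserved along the iteration since $\varphi_n\geq\mu_4>1/a$) one obtains
\begin{align*}
\mathcal{F}_4^2(\varphi,b)(\theta)=\min\bigl(a^2\varphi(\theta-2\omega)-b[ag(\theta-2\omega)+g(\theta-\omega)],\;1-bg(\theta-\omega)\bigr),
\end{align*}
where the second entry decreases in $b$ but the first contains $g(\theta-\omega)-|a|g(\theta-2\omega)$ of indefinite sign. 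The approach is a coupled induction on $\varphi_n$ together with its partner $\psi_n:=\mathcal{F}_4(\varphi_n)$, aiming to establish simultaneously $\varphi_n(b_1,\cdot)\leq\varphi_n(b_0,\cdot)$ and $\psi_n(b_1,\cdot)\geq\psi_n(b_0,\cdot)$, so that the order-reversing $\mathcal{F}_4$ reconciles the two occurrences of $-bg$ inside $\mathcal{F}_4^2$. The main obstacle is the base step for $\psi_0$, whose derivative in $b$ on the linear branch of the inner $h_4$ equals the indefinite $|a|g(\theta-2\omega)-g(\theta-\omega)$. I expect to close this by iterating $\mathcal{F}_4^2(\cdot,b_1)$ from an initial curve pinched above by $\varphi_\infty(b_0)$ and below by $\mu_4(b_1)$ inside the attracting region of $\varphi_\infty(b_1)$; the monotonicity of $\mathcal{F}_4^2$ in the curve argument then forces the decreasing sequence of iterates to stay below $\varphi_\infty(b_0)$, and uniqueness of the continuous attracting two-periodic curve from Proposition \ref{prop_onlyonecurve} identifies its limit with $\varphi_\infty(b_1)$.
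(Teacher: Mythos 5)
Your handling of systems \eqref{dyn_sys}, \eqref{dyn_sys_2} and \eqref{dyn_sys_3} is correct and coincides with the paper's argument: the seed is monotone in $b$ because $g\geq 0$, and the one-step operator is order-preserving in the curve and monotone in $b$, so the induction passes to the pointwise limit. For system \eqref{dyn_sys_4} you have also put your finger on the genuine difficulty, which, it is fair to say, the paper's own proof passes over in one line: invoking that the (two-step) operator ``preserves order'' only compares $\mathcal{F}_{4,b_1}^2(\varphi_{n-1}(b_1))$ with $\mathcal{F}_{4,b_1}^2(\varphi_{n-1}(b_0))$, and never addresses the dependence of the operator itself on $b$, where on the linear branch the $b$-derivative is $\abs{a}g(\theta-2\omega)-g(\theta-\omega)$, of indefinite sign, exactly as you compute.

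However, neither of your proposed repairs closes this. The coupled induction is dead on arrival, not merely obstructed at the base step: since $\varphi_0\geq\mu\geq 1/a$, one has $\psi_0(b,\theta)=\mathcal{F}_4(\varphi_0)(\theta)=a+b\left[\abs{a}g(\theta-2\omega)-g(\theta-\omega)\right]$, so the desired inequality $\psi_0(b_1,\cdot)\geq\psi_0(b_0,\cdot)$ is simply false for general nonnegative $g$, and no later step can repair a false base case. The fallback ``pinching'' argument is circular: to force the $\mathcal{F}_{4,b_1}^2$-iterates to stay below $\varphi_\infty(b_0)$ you need $\varphi_\infty(b_0)$ to be a supersolution for the $b_1$-operator, i.e.\ $\mathcal{F}_{4,b_1}^2(\varphi_\infty(b_0))\leq\varphi_\infty(b_0)=\mathcal{F}_{4,b_0}^2(\varphi_\infty(b_0))$, which is precisely the indefinite-sign comparison you set out to avoid; moreover, for $b_1=b^*(a)$ the uniqueness statement of Proposition \ref{prop_onlyonecurve} that you invoke to identify the limit is not available, since that proposition assumes $b<b^*(a)$. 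So for \eqref{dyn_sys_4} your write-up leaves the statement unproved. A route that does seem workable is to use the explicit unrolled form of the iteration: since $\varphi_n\geq\mu\geq 1/a$ along the whole scheme, $\varphi_n(b,\theta)=\min_{0\leq k\leq n}S_k(b,\theta)$ with each branch $S_k$ affine in $b$ (the branch $S_0(b,\theta)=1-bg(\theta-\omega)$ has nonpositive slope), and one can try to show that a branch whose $b$-slope is positive can never be the strict minimizer; for $k=1$ this is an elementary estimate (positive slope forces $S_1>S_0$ because $\abs{a}>1$), and an inductive version of this would give the monotonicity of $\varphi_n$, hence of $\varphi_\infty$, in $b$.
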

\begin{proof}
    Consider the system \eqref{dyn_sys_4}. We have that the corresponding sequence of functions, $\{\varphi_{n}(b_0,\theta)\}_n$ and $\{\varphi_{n}(b_1,\theta)\}_n$, converge each to an invariant curve. For $n=0$, we have
    \begin{align*}
        \varphi_{0}(b_0,\theta) &= 1 - b_0 g(\theta-\omega),\\
        \varphi_{0}(b_1,\theta) &= 1 - b_1 g(\theta-\omega).
    \end{align*}
    For $n>0$, we have that
    \begin{align*}
        \varphi_{n}(b_0,\theta) &= h_4(h_{4}(\varphi_{n-1}(b_0,\theta-2\omega)) - b_0 g(\theta-2\omega)) - b_0 g(\theta-\omega),\\
        \varphi_{n}(b_1,\theta) &= h_4(h_{4}(\varphi_{n-1}(b_1,\theta-\omega)) - b_1 g(\theta-2\omega)) - b_1 g(\theta-\omega).
    \end{align*}
    Since $b_0<b_1$ and $g(\theta)\geq0$ for all $\theta\in\T$, we have that $\varphi_{0}(b_1,\theta) \leq \varphi_{0}(b_0,\theta)$. Therefore, using that $\mathcal{F}_{4}$ preserves order, we have that $\varphi_{n}(b_1,\theta) \leq \varphi_{n}(b_0,\theta)$, for all $n\geq0$ and for all $\theta\in\T$. Taking the limit over $n$ in both sides of the last inequality, we obtain that $\varphi_\infty(b_1,\theta)\leq\varphi_\infty(b_0,\theta)$, for all $\theta\in\T$. For the systems \eqref{dyn_sys}, \eqref{dyn_sys_2} and \eqref{dyn_sys_3}, the proof is similar to the one given.
\end{proof}

In our case, the curves are always bounded. Hence, in order to prove that the family $\{\varphi_\infty(b)\}_b$ of invariant curves fractalize as $b\to b^*(a)$, it is enough to show that the Lipschitz constants of the family of invariant curves becomes unbounded, while $b$ approaches to $b^*(a)$. If we impose $g$ to be positive, we are able to show the fractalization phenomenon.
\begin{proof}[Proof of Theorem \ref{weak_frac_thm}]
    Consider the system \eqref{dyn_sys_4}. For $0<b<b^*(a)$, since $g(\theta)\geq0$ for all $\theta\in\T$, by Proposition \ref{monotonic_sequence_limit}, the family $\{\varphi_\infty(b)\}_b$ is monotone. Therefore, since $\{\varphi_\infty(b)\}_b$ is monotone and bounded, for each $\theta\in\T$
    \begin{align*}
        \lim_{b\to b^*(a)} \varphi_\infty(b,\theta)
    \end{align*}
    exists. For every $\theta\in\T$ and $0<b<b^*(a)$ we have
    \begin{align*}
        \varphi_0(b,\theta)\geq \varphi_\infty(b,\theta) \geq \lim_{b\to b^*(a)} \varphi_\infty(b,\theta)\geq \varphi_\infty(b^*(a),\theta)\geq \mu(\theta).
    \end{align*}
    For every $\theta\in\T$ and $0<b<b^*(a)$ we know that
    \begin{align*}
        \varphi_\infty(b,\theta+\omega) = h_4\circ\varphi_\infty(b,\theta) - b g(\theta).
    \end{align*}
    Hence, taking $b\to b^*(a)$ we get that
    \begin{align*}
        \lim_{b\to b^*(a)}\varphi_\infty(b,\theta+\omega) = h_4\left(\lim_{b\to b^*(a)}\varphi_\infty(b,\theta)\right) - b^*(a) g(\theta).
    \end{align*}
    But the only invariant curve in between $\varphi_0(b^*(a))$ and $\varphi_\infty(b^*(a))$ is $\varphi_\infty(b^*(a))$. Therefore, we conclude that
    \begin{align*}
        \lim_{b\to b^*(a)} \varphi_\infty(b,\theta) = \varphi_\infty(b^*(a),\theta).
    \end{align*}
    Assume by contradiction that
    \begin{align*}
            \limsup_{b\to b^*} L(\varphi_\infty(b)|_I) < +\infty.
    \end{align*}
    Since $\omega$ is irrational and $\T$ is compact, we have that any $\theta\in\T$ can be obtained from a value in $I$ by adding a bounded multiple of $\omega$. Therefore, there exist $K_1$ and $K_2$ depending on $\omega,a,b$ and $I$ such that $L(\varphi_\infty(b,\theta))\leq K_1 L(\varphi_\infty(b,\theta)|_I) + K_2$. This implies that 
    \begin{align*}
            L:=\limsup_{b\to b^*} L(\varphi_\infty(b)) < +\infty.
    \end{align*}
    Then, for $0<b\leq b^*(a)$, the family $\{\varphi_\infty(b,\theta)\}_{b}$ is uniformly bounded and has a uniformly Lipschitz constant $L$. By the Arcelà-Ascoli, Theorem we have that the family $\{\varphi_\infty(b,\theta)\}_{b}$ has a subsequence that converges uniformly to a Lipschitz function with Lipschitz constant $L$. Therefore, the curve $\varphi_\infty(b^*(a),\theta)$ is a Lipschitz function with Lipschitz constant $L$. In contradiction with the result that shows that $\varphi_\infty(b^*(a),\theta)$ is a noncontinuous curve (Theorem \ref{big_thm}).
\end{proof}

A consequence of Arcelà-Ascoli Theorem is that a sequence of Lipschitz curves that converge pointwise to a noncontinuous curve cannot have a uniform Lipschitz constant. Moreover, assume we have a family of Lipschitz curves $\{\eta_n\}_n$ defined over $\T$ that converge pointwise to a noncontinuous invariant curve $\eta_\infty$. Then for any interval $I\subset\T$,
    \begin{align*}
        \liminf_{n} L(\eta_n|_I) = +\infty.
    \end{align*}

Note that, for fixed $b=b^*(a)$, this is the case for the monotone sequence of functions defined in Lemma \ref{montonicity_lemma}. Therefore, we have this concluding result that characterizes our nonsmooth invariant curve.
\begin{proof}[Proof of Theorem \ref{mon_characterization}]
    If $f_n(\theta):=h\circ f_{n-1}(\theta-\omega)- b g(\theta-\omega)$ has pointwise limit, then take the limit in both sides to get that $f_\infty(\theta):=h\circ f_{\infty}(\theta-\omega)- b g(\theta-\omega)$.
    Assume that $f_\infty\leq f_0$, the case $f_\infty\geq f_0$ follows by similar arguments for $\{f_m^{\uparrow}\}_m$. Since each $f_n$ is continuous, $f_m^{\downarrow}(\theta)=\inf_{k\leq m}f_k(\theta)$ is continuous and $f_\infty(\theta)\leq f_{m}^{\downarrow}(\theta)\leq f_m(\theta)$, for all $m\geq0$ and for all $\theta\in\T$. Since $f_n$ converges pointwise to $f_\infty$, $f_m^{\downarrow}$ converges pointwise to $f_\infty$. For all $\theta\in\T$ and all $m\geq0$, we have that $f_\infty(\theta)\leq f_{m+1}^{\downarrow}(\theta)\leq f_m^{\downarrow}(\theta)$. Hence, the sequence $\{f_m^{\downarrow}\}_m$ is monotone. By Dini's Theorem, we have that $f_\infty$ is continuous if and only if $\{f_m^{\downarrow}\}_m$ is uniformly convergent.
\end{proof}
\begin{proof}[Proof of Corollary \ref{char_corollary}]
    Since $\T$ is compact and $\{\varphi_n\}_n$ is monotone, Dini's Theorem implies that $\varphi_\infty$ is noncontinuous if and only if $\{\varphi_n\}_n$ converges uniformly.
\end{proof}

\begin{remark}
    Moreover, under the same hypothesis of Theorem \ref{mon_characterization}, if additionally we require that $g$, $h$ and $f_0$ are all Lipschitz continuous, then from the Arcelà-Ascoli Theorem we obtain that:
    \begin{itemize}
        \item If $\{f_m^{\downarrow}\}_m$ has uniform Lipschitz constant, then the convergence of $\{f_m^{\downarrow}\}_m$ is uniform and $f_\infty$ is Lipschitz continuous.
        \item If $\{f_m^{\uparrow}\}_m$ has uniform Lipschitz constant, then the convergence of $\{f_m^{\uparrow}\}_m$ is uniform and $f_\infty$ is Lipschitz continuous.
    \end{itemize}
\end{remark}
\begin{proof}
    The invariance of $f_\infty$ follows from the same argument in the proof of Theorem \ref{mon_characterization}. Assume that $f_\infty\leq f_0$, the case $f_\infty\geq f_0$ follow by similar arguments for $\{f_m^{\uparrow}\}_m$. For $n\geq0$, we have that $\{f_n\}_n$ is a sequence of Lipschitz functions.
    Note that, for all $\theta\in\T$, we have that $f_0^{\downarrow}(\theta)=f_0(\theta)$ and
    \begin{align*}
        f_m^{\downarrow}(\theta)=\inf\{f_{m-1}^{\downarrow}(\theta), f_m(\theta)\}=\frac{f_{m-1}^{\downarrow}(\theta)+f_m(\theta)+\abs*{f_{m-1}^{\downarrow}(\theta)-f_m(\theta)}}{2}.
    \end{align*}
    Therefore, by induction we get that $\{f_m^{\downarrow}\}_m$ is a sequence of Lipschitz curves. If $\{f_m^{\downarrow}\}_m$ or $\{f_m^{\uparrow}\}_m$ have a uniformly Lipschitz constant, then by Arcelà-Ascoli Theorem $f_\infty$ is a Lipschitz curve.
\end{proof}

\subsection{Uniform contraction case}\label{one_inv_curv}
In this section, we consider that $\abs{a}<1$. In this case, the piecewise-linear quasiperiodically forced dynamical system has a uniform contraction. Therefore, we can prove that we have a unique Lipschitz invariant curve.

Recall the definition of the seminorm 
\begin{align*}
    L(f)=\sup\{\abs{f(x)-f(y)}/\abs{x-y}\mid x,y\in\T,x\not=y\}.
\end{align*}
Recall that an invariant curve of $F_i$ is a fixed point of the map $\mathcal{F}_i$.
In the four piecewise-linear systems, the map $\mathcal{F}_i$ is a Lipschitz map with Lipschitz constant $\abs{a}$. Actually,
\begin{align*}
    \abs{\mathcal{F}_i(\varphi)(\theta)-\mathcal{F}_i(\psi)(\theta)}\leq \abs{a}\abs{\varphi(\theta-\omega)-\psi(\theta-\omega)}.
\end{align*}
Therefore, we obtain that
\begin{align*}
    \norm{\mathcal{F}_i(\varphi)-\mathcal{F}_i(\psi)}_L &= \norm{\mathcal{F}_i(\varphi)-\mathcal{F}_i(\psi)}_\infty + L(\mathcal{F}_i(\varphi)-\mathcal{F}_i(\psi))\\     
    &\leq \abs{a}\norm{\varphi-\psi}_\infty + \abs{a} L(\varphi-\psi) = \abs{a}\norm{\varphi-\psi}_L.
\end{align*}
As a consequence of the Banach fixed point Theorem, we obtain Theorem \ref{thm_uniform_contraction_case}.

\section{Conclusions}
We conclude the paper with a summary of the main results and a discussion of possible extensions and open problems.
We proved the existence of nonsmooth bifurcations in four piecewise-linear quasiperiodically forced maps. For the bifurcation parameters, we established that the closure of the attracting invariant set is a region of two-dimensional Lebesgue positive measure. Additionally, we demonstrated that an invariant (or two-periodic) curve undergoes a fractalization phenomenon as the parameters approach the curve of bifurcation. Finally, under suitable assumptions, we characterized the noncontinuity of the attracting curve in terms of the uniform convergence of a sequence of continuous curves.

We have extended the results of \cite{jorba2024nonsmooth}. In particular, our second result concerning the closure of the attracting invariant set provides a positive answer to a question in \cite{keller1996sna}. Both the present work and \cite{jorba2024nonsmooth} contribute to clarify the mechanisms that lead to the creation of strange nonchaotic attractors in piecewise-linear quasiperiodically forced maps.

Regarding future work and the open problems, we mention two natural extensions. First, to study higher-dimensional examples for which similar results can be obtained. Second, it would be of interest to adapt these techniques to smooth models, in order to find more mechanisms for detecting nonsmooth bifurcations.

\section*{Acknowledgements}
\thanks{
This work has been supported by the Spanish grant PID2021-125535NB-I00 funded by MICIU/AEI, Spain/10.13039/501100011033 and by ERDF/EU, Spain. 
R.M.V. and J.C.T. also acknowledge the Catalan, Spain grant 2021-SGR-01072.
\vspace{0.25cm}

With profound sadness and respect, this publication is dedicated to the memory of our collaborator, friend, and esteemed Prof. Àngel Jorba. He was part of the early development of this work. His passing, prior to its completion, represents a deeply felt absence. We honour his legacy and his significant contributions.
\vspace{0.25cm}

This version of the article has been accepted for publication, after peer review (when applicable) but is not the Version of Record and does not reflect post-acceptance improvements, or any corrections. The Version of Record is available online at: https://doi.org/10.1007/s12346-025-01438-0.
}

\section*{ORCID}
Rafael Martinez-Vergara ORCID: \href{https://orcid.org/0009-0006-8350-589X}{0009-0006-8350-589X}.\\
Joan Carles Tatjer ORCID: \href{https://orcid.org/0000-0001-8309-3940}{0000-0001-8309-3940}.

\bibliographystyle{alpha}
\bibliography{biblio}

@book{walters2000introduction,
  title={An introduction to ergodic theory},
  author={Walters, Peter},
  volume={79},
  year={2000},
  publisher={Springer Science \& Business Media}
}

@article{jorba2024nonsmooth,
  title={Nonsmooth Pitchfork Bifurcations in a Quasi-Periodically Forced Piecewise-Linear Map},
  author={Jorba, Angel and Tatjer, Joan Carles and Zhang, Yuan},
  journal={International Journal of Bifurcation and Chaos},
  pages={2450084},
  year={2024},
  publisher={World Scientific},
  DOI = {10.1142/S0218127424500846}
}

@article{jorba2008mechanism,
  title={A mechanism for the fractalization of invariant curves in quasi-periodically forced 1-D maps},
  author={Jorba, {\`A}ngel and Tatjer, Joan Carles},
  journal={Discrete and Continuous Dynamical Systems-B},
  volume={10},
  number={2\&3},
  pages={537--567},
  year={2008},
  publisher={Discrete and Continuous Dynamical Systems-B},
  DOI = {10.3934/dcdsb.2008.10.2i}
}

@book{cobzacs2019lipschitz,
    AUTHOR = {Cobza\c s, \c Stefan and Miculescu, Radu and Nicolae, Adriana},
     TITLE = {Lipschitz functions},
    SERIES = {Lecture Notes in Mathematics},
    VOLUME = {2241},
 PUBLISHER = {Springer, Cham},
      YEAR = {2019},
     PAGES = {xiv+591}
}

@article {keller1996sna,
    AUTHOR = {Keller, Gerhard},
     TITLE = {A note on strange nonchaotic attractors},
   JOURNAL = {Fund. Math.},
  FJOURNAL = {Fundamenta Mathematicae},
    VOLUME = {151},
      YEAR = {1996},
    NUMBER = {2},
     PAGES = {139--148}
}

@book {katz2004ha,
    AUTHOR = {Katznelson, Yitzhak},
     TITLE = {An introduction to harmonic analysis},
    SERIES = {Cambridge Mathematical Library},
   EDITION = {Third},
 PUBLISHER = {Cambridge University Press, Cambridge},
      YEAR = {2004},
     PAGES = {xviii+314},
       DOI = {10.1017/CBO9781139165372}
}

@book {roo1982real,
    AUTHOR = {van Rooij, A. C. M. and Schikhof, W. H.},
     TITLE = {A second course on real functions},
 PUBLISHER = {Cambridge University Press, Cambridge-New York},
      YEAR = {1982},
     PAGES = {xiii+200}
}

@article {Jorba2018Mu,
    AUTHOR = {Jorba, \`Angel and Mu\~noz-Almaraz, Francisco Javier and
              Tatjer, Joan Carles},
     TITLE = {On non-smooth pitchfork bifurcations in invertible
              quasi-periodically forced 1-{D} maps},
   JOURNAL = {J. Difference Equ. Appl.},
  FJOURNAL = {Journal of Difference Equations and Applications},
    VOLUME = {24},
      YEAR = {2018},
    NUMBER = {4},
     PAGES = {588--608},
       DOI = {10.1080/10236198.2017.1331889},
}

@article {jager2009,
    AUTHOR = {J\"ager, Tobias H.},
     TITLE = {The creation of strange non-chaotic attractors in non-smooth
              saddle-node bifurcations},
   JOURNAL = {Mem. Amer. Math. Soc.},
  FJOURNAL = {Memoirs of the American Mathematical Society},
    VOLUME = {201},
      YEAR = {2009},
    NUMBER = {945},
     PAGES = {vi+106},
       DOI = {10.1090/memo/0945}
}

@article {Bjer2009sna,
    AUTHOR = {Bjerkl\"ov, Kristian},
     TITLE = {S{NA}'s in the quasi-periodic quadratic family},
   JOURNAL = {Comm. Math. Phys.},
  FJOURNAL = {Communications in Mathematical Physics},
    VOLUME = {286},
      YEAR = {2009},
    NUMBER = {1},
     PAGES = {137--161},
       DOI = {10.1007/s00220-008-0626-y}
}

@article {Fuh2016nonsmooth,
    AUTHOR = {Fuhrmann, Gabriel},
     TITLE = {Non-smooth saddle-node bifurcations {I}: existence of an
              {SNA}},
   JOURNAL = {Ergodic Theory Dynam. Systems},
  FJOURNAL = {Ergodic Theory and Dynamical Systems},
    VOLUME = {36},
      YEAR = {2016},
    NUMBER = {4},
     PAGES = {1130--1155},
       DOI = {10.1017/etds.2014.92}
}

@article {Als2009Ondef,
    AUTHOR = {Alsed\`a, Llu\'is and Costa, Sara},
     TITLE = {On the definition of strange nonchaotic attractor},
   JOURNAL = {Fund. Math.},
  FJOURNAL = {Fundamenta Mathematicae},
    VOLUME = {206},
      YEAR = {2009},
     PAGES = {23--39},
       DOI = {10.4064/fm206-0-2}
}

@article {Dueñas2023Bif,
    AUTHOR = {Due\~nas, Jes\'us and N\'u\~nez, Carmen and Obaya, Rafael},
     TITLE = {Bifurcation theory of attractors and minimal sets in d-concave
              nonautonomous scalar ordinary differential equations},
   JOURNAL = {J. Differential Equations},
  FJOURNAL = {Journal of Differential Equations},
    VOLUME = {361},
      YEAR = {2023},
     PAGES = {138--182},
       DOI = {10.1016/j.jde.2023.02.051},
}

@article {Núñez2008Anon,
    AUTHOR = {N\'u\~nez, Carmen and Obaya, Rafael},
     TITLE = {A non-autonomous bifurcation theory for deterministic scalar
              differential equations},
   JOURNAL = {Discrete Contin. Dyn. Syst. Ser. B},
  FJOURNAL = {Discrete and Continuous Dynamical Systems. Series B. A Journal
              Bridging Mathematics and Sciences},
    VOLUME = {9},
      YEAR = {2008},
    NUMBER = {3-4},
     PAGES = {701--730},
       DOI = {10.3934/dcdsb.2008.9.701},
}

@article {Dueñas2024Gen,
    AUTHOR = {Due\~nas, Jes\'us and N\'u\~nez, Carmen and Obaya, Rafael},
     TITLE = {Generalized pitchfork bifurcations in {D}-concave
              nonautonomous scalar ordinary differential equations},
   JOURNAL = {J. Dynam. Differential Equations},
  FJOURNAL = {Journal of Dynamics and Differential Equations},
    VOLUME = {36},
      YEAR = {2024},
    NUMBER = {4},
     PAGES = {3125--3157},
       DOI = {10.1007/s10884-023-10309-8},
}

@article {Dueñas2025Sadd,
    AUTHOR = {Due\~nas, Jes\'us and N\'u\~nez, Carmen and Obaya, Rafael},
     TITLE = {Saddle-node bifurcations for concave in measure and d-concave
              in measure skewproduct flows with applications to population
              dynamics and circuits},
   JOURNAL = {Commun. Nonlinear Sci. Numer. Simul.},
  FJOURNAL = {Communications in Nonlinear Science and Numerical Simulation},
    VOLUME = {142},
      YEAR = {2025},
     PAGES = {Paper No. 108577, 24},
       DOI = {10.1016/j.cnsns.2024.108577},
}

@book {Stein2003Four,
    AUTHOR = {Stein, Elias M. and Shakarchi, Rami},
     TITLE = {Fourier analysis},
    SERIES = {Princeton Lectures in Analysis},
    VOLUME = {1},
      NOTE = {An introduction},
 PUBLISHER = {Princeton University Press, Princeton, NJ},
      YEAR = {2003},
     PAGES = {xvi+311}
}

@article {Jorba2007Old,
    AUTHOR = {Jorba, \`Angel and Tatjer, Joan Carles and N\'u\~nez, Carmen
              and Obaya, Rafael},
     TITLE = {Old and new results on strange nonchaotic attractors},
   JOURNAL = {Internat. J. Bifur. Chaos Appl. Sci. Engrg.},
  FJOURNAL = {International Journal of Bifurcation and Chaos in Applied
              Sciences and Engineering},
    VOLUME = {17},
      YEAR = {2007},
    NUMBER = {11},
     PAGES = {3895--3928},
       DOI = {10.1142/S0218127407019780},
}

@book {Conway1978FunctComp,
    AUTHOR = {Conway, John B.},
     TITLE = {Functions of one complex variable},
    SERIES = {Graduate Texts in Mathematics},
    VOLUME = {11},
   EDITION = {Second},
 PUBLISHER = {Springer-Verlag, New York-Berlin},
      YEAR = {1978},
      ISBN = {0-387-90328-3},
}

\end{document}